\def\Ind#1#2{#1\setbox0=\hbox{$#1x$}\kern\wd0\hbox to 0pt{\hss$#1\mid$\hss}
\lower.9\ht0\hbox to 0pt{\hss$#1\smile$\hss}\kern\wd0}
\def\notind#1#2{#1\setbox0=\hbox{$#1x$}\kern\wd0
\hbox to 0pt{\mathchardef\nn=12854\hss$#1\nn$\kern1.4\wd0\hss}
\hbox to 0pt{\hss$#1\mid$\hss}\lower.9\ht0 \hbox to 0pt{\hss$#1\smile$\hss}\kern\wd0}
\newtheorem{theorem}{Theorem}[section]
\newtheorem{corollary}[theorem]{Corollary}
\newtheorem*{claim}{Claim}
\newtheorem{lemma}[theorem]{Lemma}
\newtheorem{fact}[theorem]{Fact}
\newtheorem{proposition}[theorem]{Proposition}
\newtheorem*{defn}{Definition}
\newtheorem*{thmB,2}{Theorem B, Second version}
\title{On $\omega$-categorical groups of finite dimension}
\author{ Moreno Invitti}
\date{}
\begin{document}

\maketitle
\begin{abstract}
    We prove that a finite-dimensional $\omega$-categorical group is finite-by-abelian-by-finite and that a finite-dimensional $\omega$-categorical ring is virtually finite-by-null.
\end{abstract}
\section*{Introduction}
This article analyzes the structure of $\omega$-categorical groups and rings of finite dimension.
\begin{defn}
 A structure $\mathfrak{M}$ is said to be $\omega$-categorical if its theory $T=T(\mathfrak{M})$ has a unique countable model up to isomorphism.
\end{defn}
Some examples of $\omega$-categorical structures are: dense linear ordered set without endpoints (this is the theory of $(\mathbb{Q},<)$ that is $\omega$-categorical by Cantor's Theorem), random graphs, infinite vector spaces over finite fields, and boolean atomless algebras.\\
The fundamental result that holds for these structures, proved independently by Ryll-Nardzewski \cite{RyllNard59}, Svenonius \cite{svenonius1959no}, and Engeler \cite{engeler1959aquivalenzklassen}, states that an $\omega$-categorical structure has only finitely many parameter-free definable sets, up to equivalence.
\begin{theorem}\label{RyllNard}
Let $\mathfrak{M}$ be a first-order structure. Then, TFAE:
\begin{enumerate}
    \item $\mathfrak{M}$ is $\omega$-categorical;
    \item In any arity, there exist finitely many parameter-free definable sets;
    \item  For any $n<\omega$, there exists only finitely many orbits in $M^n$ for the action of $\operatorname{Aut}_{\emptyset}(\mathfrak{M})$.
\end{enumerate}
\end{theorem}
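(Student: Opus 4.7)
The plan is to prove the cycle $(1)\Rightarrow(2)\Rightarrow(3)\Rightarrow(1)$, passing through the Stone space $S_n(T)$ of complete $n$-types of $T$ over $\emptyset$. Two standard dictionaries are the engine of the argument: first, parameter-free definable subsets of $M^n$ up to equivalence are in bijection with clopen subsets of $S_n(T)$, so condition $(2)$ is equivalent to $S_n(T)$ being finite for every $n$; second, in a sufficiently homogeneous countable model, orbits of $\operatorname{Aut}_\emptyset(\mathfrak{M})$ on $M^n$ coincide with realized complete types.

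For $(1)\Rightarrow(2)$, I would argue by contradiction. Suppose $S_n(T)$ is infinite for some $n$. Since $S_n(T)$ is compact Hausdorff with a basis of clopens, an infinite discrete space is impossible, so there must exist a non-isolated type $p(\bar x)\in S_n(T)$. By the omitting types theorem there is a countable model $\mathfrak{M}_0\models T$ omitting $p$, and by a compactness/Löwenheim–Skolem construction a countable model $\mathfrak{M}_1\models T$ realizing $p$. These are non-isomorphic countable models, contradicting $\omega$-categoricity.

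For $(2)\Rightarrow(3)$, assume each $S_n(T)$ is finite. Then every type is isolated, and any countable model of $T$ is atomic and realizes all $n$-types. The unique countable model $\mathfrak{M}$ is therefore $\omega$-homogeneous: given a partial elementary map between finite tuples, the type over the domain of any candidate extension is isolated by a formula that is necessarily realized in $\mathfrak{M}$, so a back-and-forth argument applies. In an $\omega$-homogeneous countable model, two tuples with the same complete type lie in the same $\operatorname{Aut}_\emptyset$-orbit, so the number of orbits on $M^n$ equals $|S_n(T)|$, which is finite.

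For $(3)\Rightarrow(1)$, suppose there are only finitely many orbits on each $M^n$. Since every parameter-free definable set is a union of orbits, $S_n(T)$ is finite and every type is isolated; hence every countable model of $T$ is atomic, and a back-and-forth argument shows any two countable atomic models are isomorphic, giving $\omega$-categoricity. The main obstacle, and the conceptual heart of the theorem, is the implication $(2)\Rightarrow(3)$: bridging the combinatorial finiteness of type spaces with the symmetry statement about $\operatorname{Aut}_\emptyset(\mathfrak{M})$ requires establishing $\omega$-homogeneity of the countable model, which in turn rests on a delicate use of isolation together with the back-and-forth extension of elementary maps.
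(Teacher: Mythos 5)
The paper does not prove this statement at all: it is the classical Ryll--Nardzewski/Svenonius/Engeler theorem, quoted with citations, so there is no in-paper argument to compare against. Your proposal is the standard textbook proof via the Stone spaces $S_n(T)$ (omitting types for $(1)\Rightarrow(2)$, isolation plus back-and-forth homogeneity for $(2)\Rightarrow(3)$, atomicity and back-and-forth for $(3)\Rightarrow(1)$), and it is essentially correct. Three small points are worth making explicit. First, the theorem needs the standing hypotheses of the classical statement, which the paper omits: the language is countable (used in the omitting types theorem) and $\mathfrak{M}$ itself is countable; for an uncountable model the implication $(1)\Rightarrow(3)$ can fail, e.g.\ a rigid uncountable dense linear order without endpoints has infinitely many $\operatorname{Aut}_\emptyset$-orbits on $M^1$ although its theory is $\omega$-categorical. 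Second, in $(2)\Rightarrow(3)$ you should not speak of ``the unique countable model,'' since uniqueness is statement $(1)$, which is not available at that stage of the cycle; the argument goes through verbatim for the given countable $\mathfrak{M}$, because isolated types are realized in every model and the isolation formula drives the back-and-forth. Third, in $(3)\Rightarrow(1)$ the step from ``finitely many parameter-free definable subsets of $M^n$'' to ``$S_n(T)$ is finite'' uses that $T=\operatorname{Th}(\mathfrak{M})$ is complete, so equivalence of formulas in $\mathfrak{M}$ coincides with equivalence modulo $T$ and the Lindenbaum algebra in $n$ variables is finite; this is routine but should be said.
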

From Theorem \ref{RyllNard}(3), we can derive the following easy corollary.
\begin{corollary}
    Let $X$ be a definable subset of an $\omega$-categorical structure, $\{c_i\}_{i<n_C}$ a finite set of elements in $X$, $\{R_i\}_{i\in I}$ a family of relations definable over a finite set $R$ and $\{f_i\}_{j\in J}$ a family of functions definable over a finite set $F$. Then, the structure $(X,\{c_i\}_{i\leq n_F},\{R_i\}_{i\in I},\{f_j\}_{j\in J})$ is $\omega$-categorical. 
\end{corollary}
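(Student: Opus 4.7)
The plan is to verify condition (3) of Theorem \ref{RyllNard} for the new structure $\mathfrak{N} = (X, \{c_i\}, \{R_i\}, \{f_j\})$. First, I would collect into a single finite tuple $\bar{a} \subseteq M$ all parameters used: those defining $X$, those in $R$, those in $F$, and the constants $c_i$ themselves. The preliminary observation is that the pointwise stabilizer $\operatorname{Aut}_{\bar{a}}(\mathfrak{M})$ already has finitely many orbits on $M^n$ for each $n$. This follows from Theorem \ref{RyllNard}(3) applied to $\mathfrak{M}$: the finitely many $\operatorname{Aut}_\emptyset(\mathfrak{M})$-orbits on $M^{n+|\bar{a}|}$ that project onto $\bar{a}$ in the last $|\bar{a}|$ coordinates are precisely the $\operatorname{Aut}_{\bar{a}}(\mathfrak{M})$-orbits on $M^n$.

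Next I would show that restriction to $X$ defines a homomorphism $\operatorname{Aut}_{\bar{a}}(\mathfrak{M}) \to \operatorname{Aut}(\mathfrak{N})$. If $\sigma \in \operatorname{Aut}_{\bar{a}}(\mathfrak{M})$, then $\sigma$ stabilises $X$ setwise because $X$ is $\bar{a}$-definable; it fixes each $c_i$ since $c_i \in \bar{a}$; and it commutes with every $R_i$ and $f_j$ because their defining formulas use only parameters from $\bar{a}$. Thus $\sigma|_X$ is an automorphism of $\mathfrak{N}$, and in particular every $\operatorname{Aut}_{\bar{a}}(\mathfrak{M})$-orbit in $X^n$ is contained in a single $\operatorname{Aut}(\mathfrak{N})$-orbit.

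Combining the two steps: the orbits of $\operatorname{Aut}_{\bar{a}}(\mathfrak{M})$ on $X^n \subseteq M^n$ are finite in number, and each $\operatorname{Aut}(\mathfrak{N})$-orbit is a union of such orbits, so $\operatorname{Aut}(\mathfrak{N})$ has finitely many orbits on $X^n$. Theorem \ref{RyllNard} applied in the reverse direction then yields the $\omega$-categoricity of $\mathfrak{N}$. There is no real obstacle here; the argument is bookkeeping around Theorem \ref{RyllNard}(3). The only detail requiring care is the preservation of definable functions under $\sigma|_X$, which is immediate from $\sigma$ being a global automorphism fixing the defining parameters.
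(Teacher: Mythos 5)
Your proposal is correct and follows essentially the same route as the paper: fix the finitely many parameters defining $X$, the $c_i$, the $R_i$ and the $f_j$, deduce from Theorem \ref{RyllNard}(3) that the pointwise stabiliser of these parameters still has finitely many orbits on each $M^n$ (hence on each $X^n$), and note that this stabiliser acts by automorphisms of the induced structure on $X$. Your write-up merely makes explicit the orbit-counting for the stabiliser and the restriction homomorphism, which the paper leaves implicit.
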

\begin{proof}
Let $D$ the set of definition of $X$ and $n_D=|D|$. Let $n_R=|R|$ and $n_F=|F|$ and $N=n_D+n_C+n_F+n_R$.\\
    It is sufficient to observe that, for any $n<\omega$, the action of $\operatorname{Aut}(\mathfrak{M})$ on the set $M^{n+N}$ has finitely many orbits. In particular, there are finitely many orbits for the subset $(d_1,...,d_{n_D},c_1,...,c_{n_C},f_1,...,f_{n_f},r_1,...,r_{n_R})\times X^n$. Since all the automorphisms of $\mathfrak{M}$ fixing $(d_1,...,d_{n_D},c_1,...,c_{n_C},f_1,...,f_{n_f},r_1,...,r_{n_R})$ are automorphisms of $X$, the proof is completed.
\end{proof}
The study of $\omega$-categorical groups has a long history. The following results are a straightforward application of Theorem \ref{RyllNard}.
\begin{lemma}
    An $\omega$-categorical group is locally finite and of finite exponent.
\end{lemma}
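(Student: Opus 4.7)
Both claims are meant to fall out of Theorem \ref{RyllNard}(3) via the observation that the maps $x\mapsto x^n$ and $\bar g\mapsto w(\bar g)$ (for any fixed group-theoretic word $w$) are $\emptyset$-definable, and hence interact well with the finiteness of the orbit count. So the plan is to pick, in each case, an appropriate tuple whose orbit one can control, and then exploit definability to pin down concrete elements inside each orbit.

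For \emph{finite exponent}, I would first show that every element has finite order. Suppose towards contradiction that some $g\in G$ has infinite order; consider the pairs $(g,g^n)$ for $n\in\mathbb{N}$. Each such pair satisfies the $\emptyset$-definable formula $y=x^n$, and if $(g,g^n)$ and $(g,g^m)$ lay in the same $\operatorname{Aut}_\emptyset(G)$-orbit then both pairs would satisfy $y=x^n$, forcing $g^m=g^n$ and thus contradicting the hypothesis that $g$ has infinite order. Hence these pairs all lie in distinct orbits of $G^2$, contradicting Theorem \ref{RyllNard}(3). So every element has finite order. Next, the order of $g$ is determined by its $1$-type (since $x^k=1$ is a $\emptyset$-formula), and by Theorem \ref{RyllNard}(3) there are only finitely many $1$-types, hence only finitely many orders, so the exponent divides their least common multiple.

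For \emph{local finiteness}, fix a finite tuple $\bar g=(g_1,\dots,g_k)\in G^k$ and let $H=\langle\bar g\rangle$. Applying Theorem \ref{RyllNard}(3) to $G^{k+1}$ and restricting to $\{\bar g\}\times G$, there are only finitely many orbits on $G$ under the stabilizer $\operatorname{Aut}_{\bar g}(G)$, equivalently only finitely many $1$-types over $\bar g$. Now every $h\in H$ has the form $h=w(\bar g)$ for some word $w$, so the formula $x=w(\bar g)$ belongs to $\operatorname{tp}(h/\bar g)$; in particular the type of $h$ over $\bar g$ determines $h$ uniquely. Hence $H$ injects into the (finite) set of $1$-types over $\bar g$, so $H$ is finite.

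The argument is essentially bookkeeping with Ryll-Nardzewski, so I do not expect a genuine obstacle; the only thing to be careful about is that one uses $\emptyset$-definability of the power and word maps (so that the formulas $y=x^n$ and $x=w(\bar g)$ really are first-order in the pure group language) and that orbit-equivalence over $\bar g$ corresponds to $\operatorname{Aut}_{\bar g}$-orbits, which is guaranteed because adjoining finitely many constants preserves $\omega$-categoricity by the preceding corollary.
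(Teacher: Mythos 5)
Your proof is correct. The paper offers no argument for this lemma, stating only that it is a straightforward application of Theorem \ref{RyllNard}, and your argument --- distinct pairs $(g,g^n)$ lie in distinct orbits, so orders are finite and take finitely many values, while every element of $\langle\bar g\rangle$ is fixed by $\operatorname{Aut}_{\bar g}(G)$, so the finitely many orbits over $\bar g$ bound $|\langle\bar g\rangle|$ --- is exactly that intended standard application.
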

\begin{lemma}\label{ChaSimSer}
    Let $G$ be an $\omega$-categorical group. Then, there exists a chain of subgroups $G=G_0\geq G_1\geq...\geq G_n=\{0\}$ such that 
    \begin{itemize}
        \item $G_i$ is characteristic and definable in $G$ for any $i\leq n$;
        \item $G_i/G_{i+1}$ is characteristically simple for any $i\leq n-1$.
    \end{itemize}
\end{lemma}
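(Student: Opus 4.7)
The plan is to exploit the Ryll-Nardzewski theorem to show that $G$ has only finitely many characteristic subgroups, and then construct the chain by descending through a maximal series in the finite lattice of characteristic subgroups of $G$.

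First I would observe that in an $\omega$-categorical structure, every $\operatorname{Aut}(\mathfrak{M})$-invariant subset is $\emptyset$-definable. Indeed, by Theorem~\ref{RyllNard}(3), the action of $\operatorname{Aut}_\emptyset(\mathfrak{M})$ on $M$ has only finitely many orbits, each of which is itself $\emptyset$-definable (being an equivalence class of a definable equivalence relation), so any union of orbits is a finite union of $\emptyset$-definable sets. Since a characteristic subgroup $H \leq G$ is by definition $\operatorname{Aut}(G)$-invariant, it is $\emptyset$-definable. By Theorem~\ref{RyllNard}(2), there are only finitely many $\emptyset$-definable subsets of $G$, and hence only finitely many characteristic subgroups of $G$; in particular the lattice of characteristic subgroups of $G$ is finite.

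Second, I would construct the chain inductively. Set $G_0 = G$. Given $G_i$ with $G_i \neq \{e\}$, let $G_{i+1}$ be a maximal element of the finite poset
\[
\mathcal{H}_i = \{H \leq G : H \text{ is characteristic in } G \text{ and } H \subsetneq G_i\},
\]
which is nonempty since it contains the trivial subgroup. Because the lattice of characteristic subgroups of $G$ is finite, this process terminates with $G_n = \{e\}$ after finitely many steps, and each $G_i$ is characteristic and $\emptyset$-definable in $G$ by the first paragraph.

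Finally, I would verify that each quotient $G_i/G_{i+1}$ is characteristically simple. Suppose toward a contradiction that $L$ is a proper nontrivial characteristic subgroup of $G_i/G_{i+1}$; write $L = K/G_{i+1}$ with $G_{i+1} \subsetneq K \subsetneq G_i$. Given $\varphi \in \operatorname{Aut}(G)$, since $G_i$ and $G_{i+1}$ are both characteristic, $\varphi$ restricts to an automorphism of $G_i$ sending $G_{i+1}$ to itself, and therefore induces some $\bar\varphi \in \operatorname{Aut}(G_i/G_{i+1})$. The characteristicity of $L$ in the quotient gives $\bar\varphi(K/G_{i+1}) = K/G_{i+1}$, and hence $\varphi(K) = K$. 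Thus $K$ is characteristic in $G$ and lies strictly between $G_{i+1}$ and $G_i$, contradicting the maximality of $G_{i+1}$ in $\mathcal{H}_i$. There is no serious obstacle here; the only subtle point is the lifting-of-characteristic-subgroups argument in this last step, which is where we use that \emph{both} $G_i$ and $G_{i+1}$ are characteristic in $G$ so that automorphisms of $G$ descend to the quotient.
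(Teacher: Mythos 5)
Your proof is correct and is precisely the ``straightforward application of Theorem~\ref{RyllNard}'' that the paper has in mind (the paper gives no further details): characteristic subgroups are $\operatorname{Aut}$-invariant, hence $\emptyset$-definable and finite in number by Ryll--Nardzewski, and a maximal chain in this finite lattice has characteristically simple factors by the standard lifting argument you give. No gaps; the only cosmetic remark is that each orbit is $\emptyset$-definable because it is the set of realizations of an isolated type (all types being isolated in an $\omega$-categorical theory), which is the cleaner justification for your first paragraph.
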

The main theorem about $\omega$-categorical groups is the classification of $\omega$-categorical characteristically simple groups obtained by Wilson in \cite{wilson1981algebraic}. 
\begin{fact}\label{CharSimGro}
    For each infinite, countable, $\omega$-categorical, characteristically simple group $H$, one of the following holds: 
    \begin{itemize}
        \item For some prime number $p$, $H$ is an elementary abelian $p$-group;
        \item $H\simeq B(F)$ or $H\simeq B^-(F)$ for some non-abelian finite simple group $F$. $B(F)$ denotes the group of all the continuous functions from the Cantor space $C$ to $F$ (with the discrete topology) while $B^-(F)$ is the subgroup consisting of the functions $f\in B(F)$ such that $f(x_0)=e$ for a fixed element $x_0\in C$.
        \item $H$ is a perfect $p$-group for some prime $p$.
    \end{itemize}
\end{fact}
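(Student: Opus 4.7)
The plan is to argue by splitting on whether $H$ is abelian, using throughout that $\omega$-categoricity via Theorem~\ref{RyllNard} heavily constrains the definable characteristic subgroups of $H$.

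\textbf{Abelian case.} An $\omega$-categorical group has finite exponent, so the exponent of $H$ factorises as $n = \prod_i p_i^{k_i}$. The primary components $H_{p_i} = \{x \in H : p_i^{k_i}x = 0\}$ are characteristic, and characteristic simplicity forces $H = H_p$ for a unique prime $p$. The socle $H[p] = \{x : px = 0\}$ is then a non-trivial characteristic subgroup, so $H = H[p]$ is elementary abelian, giving case~(1).

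\textbf{Non-abelian case.} The derived subgroup $[H,H]$ is characteristic and non-trivial, so $H = [H,H]$ is perfect. Let $\pi(H)$ be the set of primes dividing the exponent of $H$. If $\pi(H) = \{p\}$, then by local finiteness $H$ is a $p$-group, and being perfect and non-abelian we land in case~(3). The remaining case, $|\pi(H)| \geq 2$, requires exhibiting the $B(F)$- or $B^-(F)$-structure. Here I would first produce a non-abelian finite simple group $F$ as a section of $H$: working with a finite subgroup (legitimate by local finiteness), I would apply Lemma~\ref{ChaSimSer} to extract a characteristically simple finite section and invoke the classical fact that a finite characteristically simple group is a direct power $F^k$ of a finite simple group, with $F$ non-abelian in the present case. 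I would then use the few-orbits condition on the index set $X$ parametrising the ``copies of $F$'' inside $H$ to realise $X$ as a zero-dimensional, compact, homogeneous metric space, hence as the Cantor space $C$, and show that the elements of $H$ correspond exactly to continuous functions $C \to F$; whether a distinguished fixed orbit-point survives in $X$ determines $B^-(F)$ versus $B(F)$.

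\textbf{Main obstacle.} The technical heart is the last step: concretely identifying the index set of ``copies of $F$'' with the Cantor space and showing that only continuous sections arise. This requires translating the finite-orbit conclusion of Ryll-Nardzewski into topological rigidity of the Boolean algebra of definable characteristic substructures of $H$, and then using characteristic simplicity to exclude intermediate characteristic subgroups that would obstruct the identification with $B(F)$. I would expect to follow Wilson's original argument in \cite{wilson1981algebraic} quite closely at this step, as this reconstruction is the substantive part of his classification and is unlikely to admit a significantly shorter route.
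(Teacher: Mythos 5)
The statement you set out to prove is labelled a \emph{Fact} in the paper and is imported verbatim from Wilson \cite{wilson1981algebraic}: the paper gives no proof of it at all, so there is no internal argument to compare yours against. Your preliminary reductions are correct as far as they go: in the abelian case the primary decomposition and the socle $H[p]$ are characteristic, so characteristic simplicity does yield an elementary abelian $p$-group; and in the non-abelian case $[H,H]$ is characteristic and non-trivial, so $H$ is perfect, and a perfect group of exponent a power of $p$ is a perfect $p$-group by local finiteness.

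The genuine gap is in the remaining case, and it is larger than the one you flag. First, the step you treat as routine --- from $|\pi(H)|\geq 2$, extract a non-abelian finite simple section of $H$ --- is not justified: a locally finite group of mixed exponent can be locally soluble, in which case it has no non-abelian simple sections whatsoever. Ruling out a perfect, characteristically simple, $\omega$-categorical, locally soluble group of non-prime-power exponent is itself part of Wilson's theorem; it does not follow from Lemma \ref{ChaSimSer} applied to finite subgroups together with the classification of finite characteristically simple groups as powers $F^k$. Second, the step you do flag --- reconstructing the Cantor space from the ``copies of $F$'' and showing every element of $H$ is a continuous function, hence $H\simeq B(F)$ or $B^-(F)$ --- is the entire content of Wilson's classification, and your proposal explicitly defers it to his paper. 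So what you have is an organisational wrapper around a citation, with one intermediate reduction that would fail as stated. Since the paper itself uses this result as a black box, citing \cite{wilson1981algebraic} is the right move, but it should then be presented as a citation rather than as a proof.
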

The important question is whether, under an assumption of "tameness", we can classify $\omega$-categorical groups and $\omega$-categorical rings. In particular, the two big meta-conjectures are the following. 
\begin{theorem}
\begin{enumerate}
    \item A tame $\omega$-categorical group is virtually nilpotent. A tame $\omega$-categorical ring is virtually nilpotent.
    \item A supertame $\omega$-categorical group is finite-by-abelian-by-finite. A supertame $\omega$-categorical ring is finite-by-null-by-finite.
\end{enumerate}
\end{theorem}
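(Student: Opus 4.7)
The statement collects two meta-conjectures whose precise content depends on a definition of ``tame'' and ``supertame'' not given in the excerpt; in light of the abstract I read these as standing for ``finite-dimensional.'' I describe the approach for part (2), group case; the ring case runs in parallel after passing to the additive group.

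The plan is to reduce to characteristically simple factors via Lemma \ref{ChaSimSer} and then apply Wilson's classification in Fact \ref{CharSimGro}. By Lemma \ref{ChaSimSer}, $G$ admits a finite characteristic definable series whose successive quotients are characteristically simple, hence fall into one of the three classes of Fact \ref{CharSimGro}: elementary abelian $p$-groups, the Wilson--Cantor groups $B(F)$ or $B^{-}(F)$ for $F$ a non-abelian finite simple group, or infinite perfect $p$-groups. The heart of the argument should be to show that under finite-dimensionality the two non-abelian classes cannot appear as infinite factors. For $B(F)$ and $B^{-}(F)$ this ought to follow by reading the clopen decomposition of Cantor space as an infinite strictly descending chain of definable subgroups, incompatible with finite dimension. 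Perfect $p$-groups should be ruled out similarly via their derived or lower central series.

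Once every infinite characteristically simple factor is elementary abelian, I would induct along the series to promote ``all factors abelian'' to ``finite-by-abelian-by-finite.'' The key step is an extension $1 \to A \to H \to B \to 1$ with $A, B$ elementary abelian. By Theorem \ref{RyllNard} the definable conjugation action of $H$ on $A$ has finitely many orbits, so the commutator map takes finitely many values up to automorphism; combined with standard finiteness theorems of B.~H.~Neumann type, this should force $[H,H]$ to be finite. Then $H/[H,H]$ is abelian, so $H$ is finite-by-abelian, and assembling the steps of the filtration yields the global conclusion after passing to a subgroup of finite index.

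The main obstacle I anticipate is the infinite perfect $p$-group case: this is the notorious outlier of $\omega$-categorical group theory, and extracting a contradiction from finite-dimensionality alone is delicate. It usually requires genuine model-theoretic input such as a well-behaved notion of rank, independence, or a Baldwin--Saxl style chain condition, and I expect this to absorb the bulk of the paper's technical work. On the ring side the additive structure is automatically abelian, and the difficulty transfers to showing that the definable bilinear multiplication factors, modulo a finite ideal, through a null ring --- which again should reduce to a dimension bound on definable bilinear maps.
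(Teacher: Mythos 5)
You are right that the statement, as written, is a meta-conjecture with ``tame/supertame'' left informal, and reading ``supertame'' as ``finite-dimensional'' is the reading the paper actually proves. Your first stage also matches the paper: decompose $G$ by Lemma \ref{ChaSimSer}, invoke Wilson's classification (Fact \ref{CharSimGro}), and kill $B(F)$ and $B^{-}(F)$ by turning the clopen structure of the Cantor set into an infinite descending chain of centralisers violating the $\omega$-DCC --- that is exactly Lemma \ref{B(F)}. But you have misplaced the difficulty. The perfect $p$-group case, which you flag as the notorious obstacle, is dispatched quickly in the paper: exponent $p$ gives local nilpotency, and a locally nilpotent finite-dimensional group is virtually nilpotent via almost-centre arguments, which is incompatible with an infinite perfect characteristically simple group.

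The genuine gap is your inductive step for an extension $1\to A\to H\to B\to 1$ with abelian layers. Finitely many orbits of $\operatorname{Aut}(H)$ on $H^{n}$ together with Neumann-type finiteness theorems do \emph{not} force $[H,H]$ to be finite: there exist $\omega$-categorical nilpotent groups of class $2$ and exponent $p$ (e.g.\ the generic such groups, built from generic alternating bilinear forms over $\mathbb{F}_p$) whose derived subgroup is infinite and which are not finite-by-abelian-by-finite. So $\omega$-categoricity alone cannot close this step; the tameness hypothesis must do real work precisely here, and this is where the bulk of the paper's effort goes. The paper's route is: first reduce to the virtually nilpotent case, not by an orbit count but either via the linearization theorem \ref{LinInv} (a non-almost-trivial action of an almost abelian group would produce a definable field, contradicting finite exponent) or via the Archer--Macpherson result that a non-nilpotent-by-finite soluble $\omega$-categorical group interprets an atomless Boolean algebra, which Lemma \ref{BooAlg} excludes in finite dimension; then reduce the nilpotent case to class $2$, where the commutator map $G\times G\to Z(G)$ is a definable bilinear quasi-form, and apply Theorem \ref{VirtTriv} --- the theorem, proved by induction on reduced dimension using principal indiscernible dimensional-generic sequences, quasi-endomorphism rings and Wedderburn's little theorem --- to conclude virtual almost triviality, i.e.\ finite-by-abelian-by-finite. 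Your ring paragraph correctly identifies that multiplication is a definable bilinear map needing a ``dimension bound,'' but that bound is exactly Theorem \ref{VirtTriv}, which your proposal assumes rather than supplies.
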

These have been proven under several conditions of tameness. If tame is substituted by stable, (1) has been proved by Felgner \cite{felgner1978ℵ0} for groups and by Baldwin and Rose \cite{baldwin1977ℵ0} for rings, while (2) by Bauer, Cherlin, and Macintyre in \cite{baur1979totally}. In the simple case, (2) has been verified by Evans and Wagner in \cite{evans2000supersimple}). (1) holds also for NSOP and NIP theories as proved by Macpherson in \cite{macpherson1988absolutely} and Krupinski in \cite{krupinski2012} (under the assumption of f.s.g. for groups) respectively. Finally, (2) has been proved under the hypothesis of finite burden by Wagner and Dobrowolski in \cite{dobrowolski2020omega}.\\
This article proves (2) under the assumption of finite-dimensionality, an important model-theoretic notion introduced by Wagner in \cite{wagner2020dimensional}. 
In the first section, we introduce finite-dimensional theories and describe the properties of finite-dimensional groups. Moreover, we introduce the notion of dimensional-generic of a group. In the second section, we define bilinear quasi-forms. These, as in \cite{dobrowolski2020omega}, are fundamental in the analysis of $\omega$-categorical groups. In the third section, we define principal indiscernible dimensional-generic sequences. These are used in section 4 to prove the finite-dimensional version of \cite[Theorem 7.5]{dobrowolski2020omega}.
\begin{theorem}\label{VirtTriv}
    A bilinear quasi-form definable in a finite-dimensional $\omega$-categorical structure is virtually almost trivial.
\end{theorem}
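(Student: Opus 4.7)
The plan is to mimic the proof of \cite[Theorem 7.5]{dobrowolski2020omega}, substituting finite-dimensionality for the finite-burden assumption and using the principal indiscernible dimensional-generic sequences constructed in Section 3 in place of the indiscernible sequences available in the finite-burden setting.

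More concretely, let $\beta:A\times B\to C$ denote the definable bilinear quasi-form. The goal is to produce definable finite-index subgroups $A_{0}\leq A$ and $B_{0}\leq B$ together with a finite subgroup $F\leq C$ such that $\beta(A_{0}\times B_{0})\subseteq F$. I would begin by fixing a long principal indiscernible dimensional-generic sequence $(a_{i})_{i<\omega}$ in $A$ and a corresponding one $(b_{j})_{j<\omega}$ in $B$, arranged so that the resulting array is mutually dimensional-generic; by Ryll-Nardzewski (Theorem \ref{RyllNard}) the values $\beta(a_{i},b_{j})$ realize only finitely many types over $\emptyset$, but we need the stronger statement that the whole image lies in a finite subgroup of $C$.

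The core step is to exploit bilinearity together with the indiscernibility of the array. The differences $\beta(a_{i}-a_{k},b_{j})$ and $\beta(a_{i},b_{j}-b_{l})$ inherit indiscernibility from the original sequences, so if the image of $\beta$ on dimensional-generic pairs were infinite, one could, via bilinearity, extract from a long enough array an independent configuration whose total dimension exceeds that of the ambient structure, contradicting finite-dimensionality. This forces $\beta(a,b)\in F$ for some finite $F\leq C$ whenever $a,b$ are dimensional-generic. A standard stabilizer argument -- the setwise stabilizer of $F$ in $A$ (respectively $B$) is a definable subgroup containing every dimensional-generic element, hence, by $\omega$-categoricity, of finite index -- then yields the required $A_{0},B_{0}$ and establishes virtual almost triviality.

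The main obstacle is the passage from an assumed infinite image of $\beta$ on generics to the dimension contradiction. In the finite-burden context of Dobrowolski-Wagner this is accomplished by an \emph{ict}-pattern counting argument, and it is not automatic that this can be replayed using only the additivity, monotonicity, and Lascar-type properties of the dimension function on a finite-dimensional theory. Engineering the right bilinear configuration -- one whose coordinates remain simultaneously dimensional-generic and whose $\beta$-values are pairwise distinct -- so that a single dimension calculation yields the contradiction is where the technical work concentrates; this is precisely the reason the machinery of principal indiscernible dimensional-generic sequences developed in Section 3 is required.
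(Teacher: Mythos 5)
Your plan correctly identifies the setting (mimic Dobrowolski--Wagner with principal indiscernible dimensional-generic sequences), but it is not a proof: the step you yourself flag as ``where the technical work concentrates'' is the entire content of the theorem, and the mechanism you sketch for it does not work. An infinite image of $\beta$ on dimensional-generic pairs does not let you ``extract an independent configuration whose total dimension exceeds that of the ambient structure'': unlike burden, where an ict-pattern accumulates one inp-row per element of an array, a dimension function is not additive over an array of realizations --- infinitely many pairwise distinct values of $\beta$ still sit inside a single definable set of bounded dimension, so no contradiction with finite-dimensionality arises from counting alone. The closing ``stabilizer of $F$'' step has the same problem in reverse: it presupposes the finite subgroup $F$ containing all generic values, which is precisely what has to be produced, and the finite-index conclusion comes from dimensional-genericity together with fibration (a definable subgroup of full dimension has finite index), not from $\omega$-categoricity; moreover one needs the subgroup to be definable over parameters over which the generic is still generic, which is exactly what principality is for.

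What actually replaces the ict-counting in the paper is an induction on the \emph{reduced dimension} $\operatorname{rdim}(\lambda)=\operatorname{rdim}_{\lambda}(G)+\operatorname{rdim}_{\lambda}(H)+\operatorname{dim}(K)$, via induced quasi-forms (Lemma \ref{induced}). Take a counterexample $\lambda$ of minimal reduced dimension and a principal indiscernible dimensional-generic sequence $\langle y_i\rangle$ of differences (Proposition \ref{Prinseq}). One first proves that along such a sequence the images $\mathrm{im}(\lambda_{y_i})$ and the annihilators $\operatorname{ann}_H(y_i)$ are pairwise comparable (Lemma \ref{Comparability}), then rules out their being commensurable by passing to the ring of quasi-endomorphisms of $H/\operatorname{ann}_H(y_0)$, using Lemma \ref{Lemm 4.6}, Lemma \ref{NilpoIsom} and Wedderburn's little theorem to force all the relations $\lambda_{y_j}^{-1}\circ\lambda_{y_i}$ to be quasi-equal to the identity and derive a dimension contradiction. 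This strict descent shows $\mathrm{im}(\lambda_{y_k})$ has infinite index, hence dimension strictly less than $\operatorname{dim}(K)$, and a fibration computation gives $\operatorname{dim}(H')=\operatorname{dim}(H)$ for the relevant $H'$; the induced form $\widetilde{\lambda}\colon A\times H'\to \mathrm{im}(\lambda_{y_k})$ therefore has strictly smaller reduced dimension and is virtually almost trivial by minimality. Principality then transfers this into $y_l\in\widetilde{\operatorname{ann}}_G(H)$, and dimensional-genericity makes $\widetilde{\operatorname{ann}}_G(H)$ of finite index, contradicting Lemma \ref{almosttrivial}. None of these steps --- reduced dimension, the comparability lemma, the quasi-endomorphism-ring argument, or the induced-form descent --- appears in your outline, so the gap is not a technical detail to be filled in but the argument itself.
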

In the fifth section, we apply Theorem \ref{VirtTriv} to prove the following result.
\begin{theorem}
    An $\omega$-categorical group of finite dimension is finite-by-abelian-by-finite, and an $\omega$-categorical ring of finite dimension is virtually null-by-finite.
\end{theorem}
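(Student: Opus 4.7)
The plan is to reduce both statements to applications of Theorem \ref{VirtTriv} by exhibiting the relevant commutator/multiplication operations as bilinear quasi-forms in a finite-dimensional $\omega$-categorical context. For the group case, I would first apply Lemma \ref{ChaSimSer} to produce a chain of characteristic definable subgroups $G = G_0 \trianglerighteq G_1 \trianglerighteq \cdots \trianglerighteq G_n = \{e\}$ whose successive quotients are characteristically simple, and then invoke Wilson's classification (Fact \ref{CharSimGro}) to analyze those quotients. Under the finite-dimensionality hypothesis, I expect both the $B(F)$/$B^-(F)$ factors and the perfect $p$-group factors to be excluded: the former because they carry too much independent combinatorial data to embed in a finite-dimensional structure, and the latter because iterated commutators on a perfect $p$-group produce a nontrivial commutator bilinear quasi-form on the group itself, contradicting the conclusion of Theorem \ref{VirtTriv} unless the group is already abelian. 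After these reductions, each factor $G_i/G_{i+1}$ is an elementary abelian $p$-group.

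Next, for each pair $i<j$, the commutator induces a definable biadditive map $(G_i/G_{i+1}) \times (G_j/G_{j+1}) \to G_{j+1}/G_{j+2}$ (or more generally into an appropriate abelian section); quotienting by its left and right radicals produces a bilinear quasi-form, to which Theorem \ref{VirtTriv} applies. The conclusion "virtually almost trivial" then gives, for each such pair, a definable finite-index subgroup on which the commutator is trivial modulo a finite kernel. Taking the intersection of these finite-index subgroups (which remains of finite index, by $\omega$-categoricity) and gathering the finite kernels into a single finite normal subgroup $N$, we obtain a definable finite-index $H \leq G$ with $H/N$ abelian, i.e.\ $G$ is finite-by-abelian-by-finite.

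For the ring case, the additive group $(R,+)$ is an $\omega$-categorical abelian group of finite dimension, so nothing needs to be done on the additive side. The ring multiplication $\cdot \colon R \times R \to R$ is by definition biadditive and definable; after quotienting by the left and right annihilators one obtains a bilinear quasi-form, and Theorem \ref{VirtTriv} again forces it to be virtually almost trivial. Unwinding gives a definable subring $R^*$ of finite additive index and a finite ideal $I \trianglelefteq R^*$ such that $R^*/I$ has zero multiplication, i.e.\ $R$ is virtually null-by-finite.

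The main obstacle is expected to be the group case, specifically two interacting difficulties: (i) ruling out the non-abelian characteristically simple factors (perfect $p$-groups and $B(F)$-type groups) under finite dimensionality, for which one needs to show that such a factor would induce a non-trivial bilinear quasi-form contradicting Theorem \ref{VirtTriv}; and (ii) the bookkeeping required to upgrade the pairwise "virtually almost trivial" conclusions on each commutator quasi-form into a single finite-index definable subgroup with a single finite central kernel working uniformly across the whole characteristic series, while ensuring at every stage that the centralizers, radicals, and quotients used to build the quasi-form remain definable and finite-dimensional.
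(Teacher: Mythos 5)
Your ring argument is essentially the paper's: multiplication is a definable biadditive map $R\times R\to R$, Theorem \ref{VirtTriv} makes it virtually almost trivial, and this is exactly ``virtually finite-by-null''. The group case, however, has two genuine gaps. First, your exclusion of the perfect $p$-group factors does not work as stated: on a group that is not nilpotent of class at most $2$, the commutator map is \emph{not} biadditive ($[g_1g_2,h]=[g_1,h]^{g_2}[g_2,h]$), so ``iterated commutators produce a nontrivial bilinear quasi-form'' is not something Theorem \ref{VirtTriv} can see. The paper instead observes that an $\omega$-categorical group of exponent $p$ is a locally finite $p$-group, hence locally nilpotent, and proves separately that a locally nilpotent finite-dimensional group has infinite almost centre and is therefore virtually nilpotent, which is incompatible with an infinite perfect characteristically simple group; the $B(F)$, $B^-(F)$ case is excluded not by a vague ``too much combinatorial data'' but by an explicit infinite descending chain of centralisers of infinite index contradicting the $\omega$-DCC.

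Second, your main construction is unsupported: for a characteristic series with characteristically simple quotients there is no reason that $[G_i,G_j]\leq G_{j+1}$, nor that the induced map on sections is biadditive (that again needs centrality of the relevant commutators), so the ``pairwise'' quasi-forms need not exist; and even granting them, pairwise virtual almost triviality of cross-section forms does not assemble by intersection and gathering of finite kernels into a single finite-by-abelian-by-finite statement --- a soluble group of derived length $3$ already shows that abelianity of the factors plus weak cross-commutator conditions is far from abelian-by-finite. The paper's actual route is: all characteristically simple factors are elementary abelian, so $G$ has finite exponent; prove $G$ is virtually nilpotent, either by the induction using the linearization Theorem \ref{LinInv} (a field action contradicts finite exponent), Lemma \ref{lemma 4} and Hempel's almost-group-theory results, or via Archer--Macpherson together with the fact that an atomless Boolean algebra is not definable in a finite-dimensional theory (Lemma \ref{BooAlg}); pass to a characteristic nilpotent subgroup of finite index; and then run a minimal-nilpotency-class argument with several reductions to reach class $2$, where the single commutator map $[\_,\_]\colon G\times G\to Z(G)$ genuinely is a bilinear quasi-form and Theorem \ref{VirtTriv} finishes. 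Your proposal is missing this entire reduction, which is the real content of the group case.
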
 
\section{Finite-dimensional groups}
We define finite-dimensional theories.
\begin{defn}
    A theory $T$ is \emph{finite-dimensional} if there exists a function $\operatorname{dim}$ from the class of all the interpretable subsets in any model $\mathcal{M}$ of $T$ into $\omega\cup\{-\infty\}$ such that for any $\phi(x,y)$ formula, $X,Y$ interpretable sets in $T$ and $f$ interpretable function from $X$ to $Y$, then
\begin{itemize}
    \item If $a,a'$ have the same type over $\emptyset$, $\operatorname{dim}(\phi(x,a))=\operatorname{dim}(\phi(x,a'))$;
    \item $\operatorname{dim}(\emptyset)=-\infty$ and $\operatorname{dim}(X)=0$ if and only if $X$ is finite;
    \item $\operatorname{dim}(X\cup Y)=\max\{\operatorname{dim}(X),\operatorname{dim}(Y)\}$;
    \item If $\operatorname{dim}(f^{-1}(y))\geq k$ for any $y\in Y$, then $\operatorname{dim}(X)\geq \operatorname{dim}(Y)+k$;
    \item If $\operatorname{dim}(f^{-1}(y))\leq k$ for any $y\in Y$, then $\operatorname{dim}(X)\leq \operatorname{dim}(Y)+k$.
\end{itemize}
\end{defn}
Examples of finite-dimensional theories are superstable theories of finite Lascar rank, supersimple theories of finite Lascar rank, and $o$-minimal theories.\\
We now focus our attention on finite-dimensional groups.  An important property of finite-dimensional groups is the $\omega$-DCC. 
\begin{defn}
    Let $G$ be a definable group. Then, $G$ has the \emph{$\omega$-DCC} property if there exists no infinite strictly descending chain of definable subgroups $\{G_i\}_{i<\omega}$ such that $|G_i:G_{i+1}|>\omega$.
\end{defn}
Groups definable in a finite-dimensional theory respect the $\omega$-DCC (Corollary 2.4 of \cite{wagner2020dimensional}). Moreover, finite-dimensional groups have the ucc, as a consequence of the following fundamental Lemma \cite[Lemma 1.1]{invitti2025End}.
\begin{lemma}\label{boundedind}
    Let $G$ be a definable group of finite dimension and $\{H_i\}_{i\in I}$ a family of uniformly definable subgroups. Then, there exist $n,d<\omega$ such that there is no $J=\{j_1,...,j_n\}\subseteq I$ of cardinality $n$ with $|\bigcap_{i=1}^kH_{j_i}:\bigcap_{i=1}^{k+1} H_{j_i}|\geq d$ for any $k\leq n-1$.
\end{lemma}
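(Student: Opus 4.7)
My plan is to argue by contradiction via compactness: if no uniform pair $(n,d)$ bounds the chains, I build in a sufficiently saturated elementary extension an infinite descending chain of definable subgroups with infinite successive indices, which forces an infinite strictly decreasing sequence of dimensions and contradicts the finite-dimensionality of $G$.

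Let $\phi(x;\bar y)$ be the uniform defining formula of the family, and let $\theta(\bar y)$ be the first-order sentence expressing that $\phi(\mathfrak{M};\bar y)$ is a subgroup of $G$. Assume, aiming at a contradiction, that for every $(n,d)\in\omega\times\omega$ there exist parameters $\bar y_1,\dots,\bar y_n$, each satisfying $\theta$, such that $[\bigcap_{i\leq \ell}\phi(\mathfrak{M};\bar y_i):\bigcap_{i\leq \ell+1}\phi(\mathfrak{M};\bar y_i)]\geq d$ for every $\ell\leq n-1$. In a monster model $\mathfrak{C}\succeq\mathfrak{M}$ I then consider the partial type $\Sigma(\bar z_1,\bar z_2,\dots)$ in countably many parameter tuples which asserts $\theta(\bar z_i)$ for every $i$ and, for each $\ell,d<\omega$, the first-order condition stating the existence of $d$ elements of $\bigcap_{i\leq\ell}\phi(x;\bar z_i)$ pairwise inequivalent modulo $\bigcap_{i\leq\ell+1}\phi(x;\bar z_i)$. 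A finite fragment of $\Sigma$ mentions only $\bar z_1,\dots,\bar z_N$ and index bounds at most $d_0$, hence is realised in $\mathfrak{M}$ by the failure hypothesis applied to $(N,d_0)$; thus $\Sigma$ is finitely satisfiable.

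By saturation of $\mathfrak{C}$, $\Sigma$ is realised by some sequence $(\bar z_i)_{i<\omega}$, producing an infinite descending chain $G_\ell:=\bigcap_{i\leq\ell+1}\phi(\mathfrak{C};\bar z_i)$ of definable subgroups of $G(\mathfrak{C})$ such that every quotient $G_\ell/G_{\ell+1}$ is infinite. Applying the upper-bound and lower-bound axioms of finite-dimensionality to the canonical surjection $G_\ell\to G_\ell/G_{\ell+1}$, whose fibres are cosets of $G_{\ell+1}$ and therefore have dimension $\operatorname{dim} G_{\ell+1}$ by translation invariance, together with the fact that an infinite interpretable set has dimension at least $1$, I get $\operatorname{dim} G_\ell\geq\operatorname{dim} G_{\ell+1}+1$ for every $\ell$. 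Hence $(\operatorname{dim} G_\ell)_{\ell<\omega}$ is an infinite strictly decreasing sequence of natural numbers bounded above by $\operatorname{dim} G$, which is absurd.

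The only technical point is to verify that the axioms of finite-dimensionality, and the value of $\operatorname{dim} G$, transfer from $\mathfrak{M}$ to $\mathfrak{C}$; this is immediate from the first axiom of the definition, since $\operatorname{dim}$ is a type-invariant of parameters and $G$ is $\emptyset$-definable. I do not anticipate substantial obstacles beyond writing out the index conditions as explicit existential first-order formulas in the $\bar z_i$ and checking that translation by a fixed element preserves dimension (which itself follows from the fourth and fifth dimensional axioms applied to the definable bijection).
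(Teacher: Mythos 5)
Your argument is correct. Note that the paper does not actually prove this lemma here --- it is quoted from [invitti2025End, Lemma 1.1] --- so there is no internal proof to compare against; but your route (negate the uniform bound, write the index conditions $[\bigcap_{i\leq\ell}\phi(x;\bar z_i):\bigcap_{i\leq\ell+1}\phi(x;\bar z_i)]\geq d$ as existential formulas, use compactness/saturation to produce an infinite descending chain of definable subgroups with infinite successive indices, then derive $\dim G_\ell\geq\dim G_{\ell+1}+1$ from the fibration axioms applied to the coset map and conclude by finiteness of $\dim G$) is exactly the standard derivation of such uniform chain conditions in the finite-dimensional setting, and every step goes through. Two small points you assert without proof are genuinely consequences of the axioms and deserve a line each: monotonicity of $\dim$ under inclusion follows from the union axiom, and ``infinite interpretable sets have dimension at least $1$'' follows by combining the union axiom (a nonempty set contains a singleton of dimension $0$, so its dimension is $\geq 0$) with the axiom that dimension $0$ characterizes finite sets. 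Also, the transfer issue you flag at the end is vacuous with the paper's definition, since $\dim$ is defined on interpretable sets in \emph{any} model of $T$; invariance over $\emptyset$-types is only needed to know the chain in the monster lives under the same finite bound $\dim(G)$, which your monotonicity remark already gives.
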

If we apply these results to the centralisers of elements in $G$, we obtain that a finite-dimensional definable group has the hereditarily $\widetilde{\mathfrak{M}}_c$-property.
\begin{defn}
    A group $G$ is \emph{hereditarily-$\widetilde{\mathfrak{M}}_c$} if, for any definable subgroups $H,N$ such that $N$ is normalised by $H$, there exist natural numbers $n_{HN}$ and $d_{HN}$ such that any sequence of centralisers 
    $$C_H(a_0/N)\geq C_H(a_0,a_1/N)\geq...\geq C_H(a_0,a_1,...,a_n/N)\geq...$$
    with $|C_H(a_0,...,a_n/N):C_H(a_0,...,a_{n+1}/N)|\geq d_{HN}$ has length at most $n_{HN}$.
\end{defn}
The notions of almost containment and commensurability are fundamental in the study of groups definable in finite-dimensional theories.
\begin{defn}
\begin{itemize}
    \item  Let $G$ be a group and $A, B\leq G$ subgroups. $A$ \emph{almost contains} $B$, denoted $A\apprge B$, if $|B:B\cap A|$ is finite.
    \item If either $A\apprle B$ or $B\apprle A$, we say that $A$ and $B$ are \emph{comparable};
    \item  If $A\apprle B$ and $B\apprle A$, $A$ and $B$ are \emph{commensurable}, denoted $A\sim B$. $\sim$ is clearly an equivalent relation on the class of subgroups of $G$.
    \item Given a family of subgroups $\{G_i\}_{i\in I}$ in $G$, the family $\{G_i\}_{i\in I}$ is \emph{uniformly commensurable} if there exists $n<\omega$ such that $|G_i/G_i\cap G_j|\leq n$ for every $i,j\in I$.
\end{itemize}
  \end{defn}
We also define the notion of almost direct sum.
\begin{defn}
    Let $G$ be a group and $H,K$ subgroups of $G$ that normalize each other. The sum $H+K$ is \emph{almost direct}, denoted by $H\widetilde{\oplus} K$, if $H\cap K$ is finite.
\end{defn}
A consequence of the hereditarily $\widetilde{\mathfrak{M}}_c$-condition is that the almost centralisers are definable.
\begin{defn}
    Let $G$ be a group, $K,H\leq G$ and $N$ a subgroup of $G$ normalized by both $H$ and $K$. The \emph{almost centraliser} in $K$ of $H/N$, denoted $\widetilde{C}_K(H/N)$ is the subgroup
    $$\{k\in K:\ C_H(k/N)\apprge H\}.$$
\end{defn}
If we assume $N=0$ and $K=H=G$, we have the notion of almost center.
\begin{defn}
    Let $G$ be a group. We define iteratively the $n$-\emph{almost center} of $G$, denoted by $\widetilde{Z}^n(G)$, as follows:
    \begin{itemize}
        \item $\widetilde{Z}^1(G)=\widetilde{C}_G(G)$;
        \item $\widetilde{Z}^{n+1}(G)$ is such that $\widetilde{Z}(G/\widetilde{Z}^n(G))=\widetilde{Z}^{n+1}(G)/\widetilde{Z}^n(G)$.
    \end{itemize}
\end{defn}
The following Lemma is proven in \cite[Proposition 3.3]{hempel2020almost}.
\begin{lemma}
    Let $G$ be a hereditarily $\widetilde{\mathfrak{M}}_c$-group and $K,H,N$ as before. If $K,H,N$ are definable, $\widetilde{C}_K(H/N)$ is definable.
\end{lemma}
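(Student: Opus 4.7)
The plan is to show that the index $[H : C_H(k/N)]$ admits a uniform finite bound as $k$ ranges over $\widetilde{C}_K(H/N)$; definability of the almost centraliser then follows at once. For each $m<\omega$, the set
$$X_m = \{k \in K : [H : C_H(k/N)] \leq m\}$$
is definable by a first-order formula (expressing that $m$ right cosets suffice to cover $H$), and by construction $\widetilde{C}_K(H/N) = \bigcup_{m<\omega} X_m$. It therefore suffices to produce $M<\omega$ with $\widetilde{C}_K(H/N) = X_M$.

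Suppose, toward a contradiction, that no such $M$ exists. Let $n = n_{HN}$ and $d = d_{HN}$ be the constants furnished by the hereditarily $\widetilde{\mathfrak{M}}_c$-hypothesis applied to the pair $(H,N)$. The strategy is to build a sequence $k_1, k_2, \ldots$ in $\widetilde{C}_K(H/N)$, as long as we please, so that $D_i := C_H(k_1, \ldots, k_i/N)$ satisfies $[D_i : D_{i+1}] \geq d$ at every step. Pick any $k_1 \in \widetilde{C}_K(H/N)$; having chosen $k_1, \ldots, k_i$, note that $m_i := [H : D_i] < \omega$ as a finite intersection of finite-index subgroups of $H$. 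Since no uniform bound exists, one may choose $k_{i+1} \in \widetilde{C}_K(H/N)$ with $[H : C_H(k_{i+1}/N)] \geq d \cdot m_i$, and then, combining the tower identity for $D_{i+1} \leq D_i \leq H$ with the inclusion $D_{i+1} \leq C_H(k_{i+1}/N)$,
$$[D_i : D_{i+1}] \;=\; \frac{[H : D_{i+1}]}{[H : D_i]} \;\geq\; \frac{[H : C_H(k_{i+1}/N)]}{m_i} \;\geq\; d.$$

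The chain so produced has the form $C_H(a_0/N) \geq C_H(a_0, a_1/N) \geq \cdots$ (with $a_0 := k_1$, $a_j := k_{j+1}$), every consecutive index is at least $d_{HN}$, and its length can be made arbitrarily large. This contradicts the hereditary $\widetilde{\mathfrak{M}}_c$-bound of $n_{HN}$, so the uniform bound $M$ must exist and $\widetilde{C}_K(H/N) = X_M$ is definable.

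The main obstacle, and the heart of the proof, is the inductive step that selects $k_{i+1}$ while guaranteeing a drop of factor at least $d$ at the intersection. The key observation is quantitative: if $[H : C_H(k_{i+1}/N)]$ is sufficiently large compared to the current finite index $m_i$, then the trivial lower bound $[H : D_i \cap C_H(k_{i+1}/N)] \geq [H : C_H(k_{i+1}/N)]$ combined with the tower formula makes the drop automatic, and the unboundedness assumption is exactly what makes such a choice possible. With this engine in place, the $\widetilde{\mathfrak{M}}_c$-hypothesis closes the contradiction.
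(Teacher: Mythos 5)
Your proof is correct: the uniform bound on $[H:C_H(k/N)]$ for $k\in\widetilde{C}_K(H/N)$, extracted from the hereditarily $\widetilde{\mathfrak{M}}_c$ chain condition via the tower formula, is exactly the standard argument, and the identification $\widetilde{C}_K(H/N)=X_M$ then gives definability. The paper itself does not prove this lemma but cites Proposition 3.3 of Hempel's \emph{Almost group theory}, whose proof proceeds along essentially the same lines as yours, so there is nothing to object to.
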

An easy result for the almost center is the following.
\begin{lemma}\label{Zfin}
    Let $G$ be a group such that $\widetilde{Z}(G)$ is finite. Then, $\widetilde{Z}(G/\widetilde{Z}(G))=\widetilde{Z}(G)$.
\end{lemma}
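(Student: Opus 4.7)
The identity in the statement should be read via the definition of $\widetilde{Z}^2(G)$: it asserts that $\widetilde{Z}^2(G) = \widetilde{Z}(G)$, or equivalently that the almost center of the quotient $G/Z$ (with $Z := \widetilde{Z}(G)$) is trivial. The plan is therefore to take an arbitrary $g \in G$ with $gZ \in \widetilde{Z}(G/Z)$ and show that $C_G(g)$ has finite index in $G$, so that $g \in Z$. Writing $\pi\colon G \to G/Z$ for the projection, the hypothesis yields immediately that $K := \pi^{-1}\bigl(C_{G/Z}(gZ)\bigr)$ has finite index in $G$, and that $[g,h] \in Z$ for every $h \in K$.

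The central tool is the commutator-by-$g$ map $\phi\colon K \to Z$ defined by $\phi(h) = [g,h]$. A short direct expansion gives the cocycle identity
\[
\phi(h_1 h_2) \;=\; \phi(h_1)\cdot h_1 \phi(h_2) h_1^{-1},
\]
so $\phi$ is a crossed homomorphism relative to the conjugation action of $K$ on the normal (in fact characteristic) subgroup $Z$. If that action were trivial on $Z$, then $\phi$ would be an honest homomorphism into the finite group $Z$, and its kernel $K \cap C_G(g)$ would have finite index in $K$, hence in $G$, which is exactly what is needed.

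To linearise the crossed homomorphism on a finite-index subgroup, I would invoke the only as-yet unused hypothesis, namely that $Z$ is finite. Then $\mathrm{Aut}(Z)$ is finite, the conjugation homomorphism $G \to \mathrm{Aut}(Z)$ has finite image, and so $C_G(Z)$ has finite index in $G$. Consequently $K' := K \cap C_G(Z)$ has finite index in $G$, and $\phi|_{K'}\colon K' \to Z$ becomes a genuine homomorphism into a finite group. Its kernel $K' \cap C_G(g)$ therefore has finite index in $K'$, hence in $G$, giving $g \in \widetilde{Z}(G) = Z$.

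I do not foresee a serious obstacle: the whole argument is an FC-centre-style computation. The only delicate point is the passage from the crossed homomorphism to an ordinary one, and that is precisely where the finiteness of $\widetilde{Z}(G)$ enters the argument, through the finiteness of $\mathrm{Aut}(\widetilde{Z}(G))$.
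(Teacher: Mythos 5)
Your proof is correct, but it takes a more elaborate route than the paper's. The paper argues in two lines: if $gZ\in\widetilde{Z}(G/Z)$ with $Z:=\widetilde{Z}(G)$, then the commutator set $\{[g,h]:h\in G\}$ is contained in finitely many cosets of $Z$ (since the conjugacy class of $gZ$ in $G/Z$ is finite), and as $Z$ itself is finite this set is finite; hence the conjugacy class $g^G=g\,[g,G]$ is finite, $C_G(g)$ has finite index, and $g\in\widetilde{Z}(G)$. You instead pass to the finite-index preimage $K$ of $C_{G/Z}(gZ)$, observe that $h\mapsto[g,h]$ is a crossed homomorphism $K\to Z$, and linearise it on $K\cap C_G(Z)$ using the finiteness of $\operatorname{Aut}(Z)$, bounding the index of the centraliser by $|Z|$ times the index of $K\cap C_G(Z)$. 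Both arguments hinge on the same finiteness of $Z$, and yours is sound (the cocycle identity, the finite index of $C_G(Z)$, and the identification of the kernel with $K'\cap C_G(g)$ all check out); what the paper's counting argument buys is brevity and the fact that it needs neither $C_G(Z)$ nor any homomorphism structure, since finiteness of the commutator \emph{set} already suffices for membership in the almost centre, while your version yields an explicit index bound as a by-product.
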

\begin{proof}
    Let $g\in \widetilde{Z}(G/\widetilde{Z}(G))$. Then, $[g,G/\widetilde{Z}(G)]+\widetilde{Z}(G)$ is finite. Since $\widetilde{Z}(G)$ is finite, $[g,G]$ is finite \hbox{i.e.} $g\in \widetilde{Z}(G)$.
\end{proof}
Another important property of almost centralisers for definable groups is symmetry \cite[Theorem 2.10]{hempel2020almost}.
\begin{lemma}\label{sym}
    Let $G$ be a definable group and $H,K,N$ definable subgroups such that both $H$ and $K$ normalize $N$. Then, $H\apprle \widetilde{C}_G(K/N)$ iff $K\apprle \widetilde{C}_G(H/N)$.
\end{lemma}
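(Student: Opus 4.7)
The plan is to prove a single direction --- deducing $K \apprle \widetilde{C}_G(H/N)$ from $H \apprle \widetilde{C}_G(K/N)$ --- since the conclusion is symmetric. First I would reduce to the case $N = 1$: because $N$ is normalised by both $H$ and $K$, it is normal in $J := \langle H, K, N \rangle$, and passing to $J/N$ turns almost-commutation mod $N$ into ordinary almost-commutation. Second, set $H_0 := H \cap \widetilde{C}_G(K)$, of finite index in $H$ by hypothesis. A short index computation based on $C_{H_0}(g) = C_H(g) \cap H_0$ gives $\widetilde{C}_G(H) = \widetilde{C}_G(H_0)$, so after replacing $H$ by $H_0$ I may additionally assume that every $h \in H$ satisfies $|K : C_K(h)| < \infty$.

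The next step is to manufacture a finite-index subgroup of $K$ capturing the generic behaviour of the centralisers $C_K(h)$. I would apply Lemma \ref{boundedind} to the uniformly definable family $\{C_K(h) : h \in H\}$: greedily choosing $h_1, h_2, \dots$ so that each new intersection drops by index at least $d$, the lemma forces termination in at most $n-1$ steps. This produces $h_1, \dots, h_m \in H$ such that $M := \bigcap_{i=1}^m C_K(h_i)$ satisfies $|M : M \cap C_K(h)| < d$ for every $h \in H$. Since each $C_K(h_i)$ already has finite index in $K$, Poincar\'e's theorem gives $|K : M| < \infty$, and the family $\mathcal{F} := \{M \cap C_K(h) : h \in H\}$ consists of uniformly commensurable subgroups of $M$, each of index less than $d$.

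To conclude, a Schlichting-style argument extracts from $\mathcal{F}$ a subgroup $L \leq M$ of finite index in $K$ that is commensurable with every $M \cap C_K(h)$ and normalised by the subgroup generated by $\mathcal{F}$. The hard part, and the main obstacle I anticipate, is translating this uniform commensurability into the sharper statement $L \subseteq \widetilde{C}_G(H)$, i.e.\ into $|H : C_H(k)| < \infty$ for every $k \in L$: pairwise commensurability with each $C_K(h)$ does not obviously yield simultaneous bounded commutator behaviour across all of $H$. I expect the argument to combine Schlichting's output with the definability of $\widetilde{C}_G(H)$ cited above and a finite-dimension count on the commutation set $E := \{(h,k) \in H \times K : [h, k] = 1\}$: the fibres of $E$ over $H$ have full dimension $\dim K$ by construction, so the fibre-dimension axioms force the fibres over generic $k \in K$ to have dimension $\dim H$, and the hereditary $\widetilde{\mathfrak{M}}_c$-condition is exactly what upgrades ``generic fibre of full dimension'' to ``generic fibre of finite index'', closing the loop.
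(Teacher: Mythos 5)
The paper never proves Lemma \ref{sym}: it is quoted from Hempel's \emph{Almost group theory} (Theorem 2.10), where symmetry is established without any finite-dimensionality hypothesis. Your argument invokes Lemma \ref{boundedind} and the dimension axioms from the outset, so at best it proves the finite-dimensional case; that is the only case this paper ever uses, but you should say explicitly that you are proving less than the cited statement. Your reductions are essentially sound: one should avoid literally forming $J/N$ (the group $\langle H,K,N\rangle$ need not be definable) and instead carry the relative centralisers $C_K(h/N)=\{k\in K:[h,k]\in N\}$, which are definable; the identity $\widetilde{C}_G(H)=\widetilde{C}_G(H_0)$ for $H_0$ of finite index is correct; and the greedy use of Lemma \ref{boundedind} plus Poincar\'e does produce a finite-index $M\leq K$ with $|M:C_M(h/N)|<d$ for all relevant $h$.

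The genuine gap is exactly where you flag it, and your proposed rescue names the wrong tool. The Schlichting step buys nothing: commensurability of some $L$ with every $C_M(h/N)$ gives no control of $C_H(k/N)$ for $k\in L$. Moreover, the hereditary $\widetilde{\mathfrak{M}}_c$-condition is not what upgrades ``full-dimensional'' to ``finite index''; the fibration axioms are. For a definable subgroup $C\leq H$, cosets are in definable bijection with $C$, so finite index forces $\dim C=\dim H$, and conversely applying the fibration axiom to the interpretable map $H\to H/C$ gives $\dim(H/C)\leq 0$, hence $|H:C|<\infty$, whenever $\dim C=\dim H$. With this lemma in hand, the count you only sketch does close the proof: let $H_1$ be the definable set of $h\in H$ with $|M:C_M(h/N)|\leq d$ (it contains $H_0$, so $\dim H_1=\dim H$), put $E=\{(h,k)\in H_1\times M:[h,k]\in N\}$, and let $M'=M\cap\widetilde{C}_G(H/N)$, a definable subgroup in this setting. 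Projecting $E$ to $H_1$, every fibre has finite index in $M$, hence full dimension, so $\dim E=\dim H+\dim M$; projecting to $M$, fibres over $M\setminus M'$ lie in subgroups of infinite index in $H$, hence have dimension at most $\dim H-1$, so the union and fibration axioms force $\dim M'=\dim M$, whence $|M:M'|<\infty$ and $K\apprle\widetilde{C}_G(H/N)$. Until you supply this last page --- the full-dimension-iff-finite-index fact, the definability of $M'$, and the two-projection computation --- the proposal is a plan with its decisive step missing rather than a proof.
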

In the last section of this article, we will use a fundamental theorem of linearization for the action of an almost abelian group definable in a finite-dimensional theory.
\begin{defn}
    Let $G$ be a group. $G$ is \emph{almost abelian} if $\widetilde{Z}(G)=G$.
\end{defn}
We introduce the notion of absolutely minimal $G$-group.
\begin{defn}
    Let $G$ be a definable group acting on the definable group $A$. $A$ is \emph{absolutely $G$-minimal} if, for any definable subgroup $H$ of finite index in $G$, $A$ is $H$-minimal. 
\end{defn}
We introduce the notion of almost centralisers of an action.
\begin{defn}
    Let $G$ be a group acting on a group $A$. The \emph{almost centraliser of the action in $G$}, denoted by $\widetilde{C}_G(A)$, is the subgroup $\{g\in G:\ C_A(g)\sim A\}$. The \emph{almost centraliser of the action in $A$}, denoted by $\widetilde{C}_A(G)$, is the subgroup $\{a\in A:\ C_G(a)\sim G\}$.
\end{defn}
It follows from Lemma \ref{boundedind} that the two almost centralisers are definable. We introduce almost trivial actions.
\begin{defn}
    Let $G$ be a group acting on a group $A$. The action of $G$ on $A$ is \emph{almost trivial} if either $\widetilde{C}_G(A)$ is of finite index in $G$ or $\widetilde{C}_A(G)$ is of finite index in $A$.
\end{defn}
In the finite-dimensional case, these two conditions are equivalent by \cite[Lemma 12.4]{invitti2025End}. The following result is fundamental in the analysis of almost trivial actions.
\begin{lemma}\label{lemma 4}
    Let $G$ be a definable almost abelian group acting on the definable almost abelian group $A$. Then, 
    $$[\widetilde{C}_G(A),\widetilde{C}_A(G)]=\langle [g,a]=ga-a:\ g\in \widetilde{C}_G(A),a\in \widetilde{C}_A(G)\rangle$$
    is finite.
\end{lemma}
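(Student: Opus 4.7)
The plan is to combine three ingredients in the finite-dimensional setting: uniform compactness bounds on centraliser indices, a reduction to the abelian case via a Neumann-type theorem, and the symmetry of almost centralisers.

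Write $G_0 := \widetilde{C}_G(A)$ and $A_0 := \widetilde{C}_A(G)$, both definable by the hereditarily $\widetilde{\mathfrak{M}}_c$-property. I would first extract uniform bounds: $G_0 = \bigcup_n \{g \in G : |A : C_A(g)| \leq n\}$ is a definable ascending union, so by compactness in the finite-dimensional theory it stabilises at some $N$, giving $|A : C_A(g)| \leq N$ uniformly for $g \in G_0$; similarly a uniform $M$ works for $A_0$. Applying the same principle to the hypotheses $\widetilde{Z}(A) = A$ and $\widetilde{Z}(G) = G$ shows that $A$ and $G$ are BFC groups, so their derived subgroups $[A,A]$ and $[G,G]$ are finite by the classical Neumann theorem. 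Since $[A,A]$ is finite and $G$-invariant, it suffices to prove finiteness in the abelian quotient $A/[A,A]$, and I may therefore assume that $A$ is abelian.

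In this abelian setting, for each $g \in G_0$ the map $a \mapsto (g-1)a$ is a homomorphism with image of size $\leq N$, whose exponent is bounded by some uniform $e = e(N)$; consequently $V := [G_0, A_0] = \sum_{g \in G_0}(g-1)A_0$ is a subgroup of $A$ of exponent dividing $e$. On the other side, applying Lemma \ref{sym} to the trivial inclusion $G_0 \subseteq \widetilde{C}_G(A)$ inside the semidirect product $G \ltimes A$ yields $A \apprle \widetilde{C}_A(G_0)$, so $\widetilde{C}_A(G_0)$ has finite index in $A$; in particular every $G_0$-orbit in $V \subseteq \widetilde{C}_A(G_0)$ is finite, with a uniform bound $M'$ obtained again by compactness.

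The main obstacle is converting these uniform orbit bounds and the bounded exponent into the actual finiteness of the definable subgroup $V$. My plan is to apply Lemma \ref{boundedind} to the uniformly definable family $\{C_V(g) : g \in G_0\}$ of subgroups of $V$: it produces a finite $F \subseteq G_0$ and constants $n,d$ such that $B := \bigcap_{g \in F} C_V(g)$ satisfies $|B : B \cap C_V(h)| < d$ for every $h \in G_0$. Combining this chain-condition data with the bounded exponent of $V$ and the $\omega$-DCC, $V$ is forced to coincide with the subgroup generated by the finitely many finite images $(g-1)A$ for $g \in F$, and is therefore finite. This last reduction --- extracting genuine finiteness from uniformity of chain behaviour --- is the delicate step and constitutes the essence of the linearisation principle for almost abelian actions in the finite-dimensional framework.
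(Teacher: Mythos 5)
Your reductions up to the bounded-exponent subgroup $V=[\widetilde{C}_G(A),\widetilde{C}_A(G)]$ are sound and run parallel to the paper (uniform bounds on the indices $|A:C_A(g)|$ and on orbit sizes via Lemma \ref{boundedind}, finiteness of the derived subgroups of almost abelian definable groups, passage to the abelian quotient). But the decisive step is missing: the claim that the chain-condition data for the family $\{C_V(g):g\in \widetilde{C}_G(A)\}$, together with bounded exponent and the $\omega$-DCC, ``forces $V$ to coincide with the subgroup generated by the finitely many images $(g-1)A$ for $g\in F$'' is asserted, not proved, and nothing you have written supports it. Knowing that $\bigcap_{g\in F}C_V(g)$ is almost contained in every $C_V(h)$ controls how elements of $V$ are moved by $h$; it gives no control whatsoever over the images $(h-1)A_0$ for $h\notin F$, which is what would be needed to trap $V$ inside a finite sum. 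There is also a secondary gap: you treat $V$ as a \emph{definable} subgroup when forming $C_V(g)$ and invoking $\omega$-DCC, but $V$ is generated by a definable set of commutators and its definability is not known in advance --- in the paper it is precisely the finiteness of the commutator set that yields this. So the proposal stops exactly where the real content of the lemma begins; you acknowledge this yourself, but that means the proof is not complete.

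For comparison, the paper closes this gap by a B.H.~Neumann-style argument: choosing $b$ with $H$-orbit of maximal size, it shows that every commutator $h(c)-c$ is a difference of two orbit elements, so the definable set $D=\{[h,b]:h\in H, b\in B\}$ lies in a fixed finitely generated (hence countable) $H$-invariant subgroup; since a definable set is either finite or unbounded, $D$ is finite, and then $[H,B]$ is generated by finitely many elements of finite order in an abelian-by-finite group. Note that your own intermediate output could also be closed off without the problematic step: once you have \emph{uniform} bounds $N$ on $|(g-1)A|$ for $g\in\widetilde{C}_G(A)$ and $M$ on the orbit sizes of elements of $\widetilde{C}_A(G)$, the semidirect product $\widetilde{C}_A(G)\rtimes \widetilde{C}_G(A)$ (modulo the finite derived subgroups) has uniformly bounded conjugacy classes, so Neumann's BFC theorem makes its derived subgroup, which contains $V$, finite. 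Either of these arguments supplies the missing content; as written, your final paragraph does not.
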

\begin{proof}
    By Lemma \ref{boundedind}, the almost centralisers $H=\widetilde{C}_G(A),B=\widetilde{C}_A(G)$ are definable subgroups such that $B$ is $G$-invariant and $H$ is $G$-normal. Since $A$ is almost abelian, $B'$ is finite and characteristic, and $H$ acts on the abelian group $B/B'$. Since $B$ is definable and every element has finite $H$-orbit, there exists $b\in B$ such that $Hb$ is of maximal cardinality. Let $h_1,...,h_n$ be a left transversal of $C_H(b)$ in $H$. Since every element $h\in H$ has centraliser in $A$ of finite index, $C=C_B(h_1,...,h_n)$ is of finite index in $B$. Let $b_1,...,b_m$ be a transversal of $C$ in $B$. Denote by $F$ the $H$-invariant finitely generated subgroup $\langle Hb,Hb_i,...,Hb_m\rangle$. Clearly, $F$ has countable cardinality. Moreover, by definition, $B=F+C$. On the other hand, let $D=\{[h,b]:\ h\in H,b\in B\}$. $D$ is clearly a definable subset of $B$ and therefore either finite or unbounded. We verify that $D\leq F$. Given $b\in B$, there exist $c\in C$ and $f\in F$ such that $b=c+f$. Therefore, $[h,b]=h(c+f)-c-f=h(c)-c+h(f)-f$. Since $h(f)\in F$, $[h,b]$ is contained in $F$ iff $h(c)-c\in F$. Define $w=c+a$, then $h_i(w)=h_i(c)+h_i(b)$ but $c\in C$ and so this is equal to $c+h_i(b)$. Since $h_i(b)\not= h_j(b)$ when $i\not=j$ and by maximality of the cardinality of the orbit of $b$, the orbit of $w$ is $c+g_i(b)$ for $i=1,...,n$. Therefore, for any $h\in H$, $h(w)=c+h_i(b)$ for a certain $i$. Consequently,
    $$h(c)-c=h(w)-h(b)-c=c+h_i(b)-h_j(b)-c=h_i(b)-h_j(b)\leq F.$$
    Being $D$ definable and not unbounded, it is finite. Therefore, the subgroup $[H,B]$ is finitely generated and, being $B$ abelian, it is finite iff the order of any generator is finite. But $n(g-1)(b)=g-1(nb)\leq D$ that is finite so also $o(g-1(b))$ is finite for every $g\in H$ and $b\in B$. Since $H'$ is finite, this proves the theorem.
\end{proof}
Finally, if $G$ and $A$ are two almost abelian groups such that $G$ acts on $A$ not almost trivially and $A$ is absolutely $G$-minimal, then the action can be linearized \cite[Theorem 12.6]{invitti2025End}.
\begin{theorem}\label{LinInv}
    Let $G$ be a definable almost abelian group acting on a definable infinite almost abelian group $A$. Assume that:
    \begin{itemize}
        \item $A$ is absolutely $G$-minimal;
        \item the action is not almost trivial.
    \end{itemize}
    Then, $G/\widetilde{C}_G(A)$ is abelian and there exists a definable field $K$ such that $G/\widetilde{C}_G(A)$ definably embeds in $K^{\times}$ and $A/\widetilde{C}_A(G)$ is isomorphic to $K^{+}$.
\end{theorem}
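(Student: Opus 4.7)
The plan is to adapt the classical Zilber-style field construction to the finite-dimensional, ``almost abelian'' setting. First I would reduce to the case $\widetilde{C}_G(A)=1$ and $\widetilde{C}_A(G)=0$. Setting $\bar{G}:=G/\widetilde{C}_G(A)$ and $\bar{A}:=A/\widetilde{C}_A(G)$, Lemma \ref{lemma 4} makes $[\widetilde{C}_G(A),\widetilde{C}_A(G)]$ finite, and any residual finite $G$-invariant subgroups of $A$ already sit inside $\widetilde{C}_A(G)$ (a finite $G$-invariant subgroup is automatically almost-central). The non-almost-triviality hypothesis keeps both $\bar{G}$ and $\bar{A}$ infinite, and absolute $G$-minimality descends to absolute $\bar{G}$-minimality since finite-index subgroups of $G$ project to finite-index subgroups of $\bar{G}$.

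Next I would prove $\bar{G}$ is abelian. For $g,h\in G$, almost abelianness gives $|G:C_G([g,h])|<\infty$, so the fixed-point set $C_A([g,h])$ is invariant under a finite-index subgroup of $G$ and hence, by absolute $G$-minimality, either finite or of finite index in $A$. The second alternative yields $[g,h]\in\widetilde{C}_G(A)$, i.e.\ $\overline{[g,h]}=1$, as required; the first alternative must be excluded using Lemma \ref{sym} (applied inside the semi-direct product $G\ltimes A$) combined with the triviality of $\widetilde{C}_{\bar{A}}(\bar{G})$ achieved in the reduction.

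The core step is the field construction, which follows the classical Zilber template. Let $K:=\operatorname{End}_{\bar{G}}(\bar{A})$, the ring of definable endomorphisms of $\bar{A}$ commuting with the $\bar{G}$-action; definability follows from the hereditarily $\widetilde{\mathfrak{M}}_c$-property and Lemma \ref{boundedind}. Since $\bar{G}$ is abelian, $\bar{G}\subseteq K$. For any nonzero $r\in K$, both $\ker r$ and $r(\bar{A})$ are definable $\bar{G}$-invariant subgroups of $\bar{A}$, so by absolute $\bar{G}$-minimality $\ker r=0$ and $r(\bar{A})=\bar{A}$; hence $r$ is a bijection whose inverse again commutes with $\bar{G}$ and therefore lies in $K$, so $K$ is a division ring. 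Commutativity of $K$ follows from a Schur-type argument over the commutative group ring $\mathbb{Z}[\bar{G}]$, combined with the $\omega$-DCC to rule out noncommuting pairs. Absolute minimality then forces $\bar{A}$ to be $1$-dimensional over $K$, giving $\bar{A}\cong K^+$ and the definable embedding $\bar{G}\hookrightarrow K^\times$.

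The main obstacle, I expect, is the abelianness step: excluding the ``finite fixed-point set'' alternative for $C_A([g,h])$ requires a careful combination of symmetry of almost centralisers (Lemma \ref{sym}) with the triviality of $\widetilde{C}_{\bar{A}}(\bar{G})$ inside the semi-direct product, which is delicate in the finite-dimensional ``almost''-framework. A secondary obstacle is ensuring commutativity of $K$, since non-commutative definable division rings of finite dimension are not immediately ruled out and typically require an additional chain-condition argument specific to this infrastructure.
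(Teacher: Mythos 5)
First, a point of comparison: the paper does not prove Theorem \ref{LinInv} at all; it is imported verbatim from \cite[Theorem 12.6]{invitti2025End}, so there is no internal proof to measure your sketch against and your argument must stand on its own. As it stands it has a genuine gap at its core step. Writing $\bar{G}=G/\widetilde{C}_G(A)$ and $\bar{A}=A/\widetilde{C}_A(G)$, the group $\bar{G}$ does not act on $\bar{A}$ by genuine automorphisms: an element of $\widetilde{C}_G(A)$ centralises a finite-index subgroup of $A$ but need not act trivially on $A$ modulo $\widetilde{C}_A(G)$. So the ring $K=\operatorname{End}_{\bar{G}}(\bar{A})$ with $\bar{G}\subseteq K$ is simply not available as you define it; the construction has to be carried out with quasi-endomorphisms (maps defined up to finite kernels and finite-index images), and the passage from that ring of ``endogenies'' to an honest definable field is precisely the technical content hidden behind the citation. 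Two further problems compound this: $\operatorname{End}_{\bar{G}}(\bar{A})$ is not a definable object, and the classical Zilber route to definability (every element a bounded sum of group elements, via indecomposability of connected components) is exactly what is unavailable here --- the cited source is explicitly about the non-virtually-connected case; moreover absolute $G$-minimality only controls definable invariant subgroups up to finite index, so for a nonzero quasi-endomorphism $r$ you get at best finite kernel and finite-index image, not $\ker r=0$ and surjectivity, hence not invertibility in the honest sense and not a division ring by your argument.

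The remaining steps also have holes, some of which you acknowledge. The opening reduction to $\widetilde{C}_G(A)=1$ and $\widetilde{C}_A(G)=0$ is not a harmless normalisation: in the almost category, quotienting by an almost centraliser need not make the new almost centraliser trivial (the paper's Lemma \ref{Zfin} treats only the case where it is finite), so the claim that ``residual finite invariant subgroups sit inside $\widetilde{C}_A(G)$'' does not deliver the reduction. The abelianness of $\bar{G}$ is left with the finite-fixed-point alternative unexcluded, as you note. Finally, commutativity of $K$ cannot be settled by a Wedderburn-type local-finiteness argument, since Theorem \ref{LinInv} assumes only finite-dimensionality, not $\omega$-categoricity; one needs a result on definable division rings in finite-dimensional theories. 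The Zilber-style strategy is the right family of ideas, but the quasi-endomorphism bookkeeping, the definability of the field, and the identification $\bar{A}\cong K^{+}$ from minimality are each substantial missing steps rather than routine adaptations.
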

\subsection{Dimensional-generics}
We introduce the notion of dimensional-generics of a definable group of finite dimension. We also recall the notion of subgroup-generic used in \cite{dobrowolski2020omega}.
\begin{defn}
Given $G$ a group definable in a theory $T$, a definable subset $X$ of $G$ is:
    \begin{itemize}
        \item \emph{subgroup-generic} if it is not contained in the union of finitely many translates of interpretable subgroups of infinite index;
        \item \emph{dimensional-generic} if $\dim(G)=\dim(X)$ with $T$ a finite-dimensional theory.
        
    \end{itemize}
    A (possibly partial) type $p$ is $\ast$-generic if every $\phi\in p$ is $\ast$-generic (with $\ast$ equal to either subgroup or dimensional).
\end{defn}
We prove the existence of dimensional-generics.
\begin{lemma}\label{ExistGen}
Let $G$ be a definable finite-dimensional group and $A$ a set of parameters. Then, there always exists a dimensional-generic type $p\in S(A)$. Moreover, we can construct infinite indiscernible sequences $\{a_i\in G\}_{i\in I}$ over $A$ such that $tp(a_i/\{a_j\}_{j<i}\cup A)$ is dimensional-generic.
\end{lemma}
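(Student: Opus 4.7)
The plan is first to produce a single dimensional-generic type over $A$ by a standard filter/co-ideal argument based on the axioms of a finite-dimensional theory, and then to iterate and apply a Ramsey-style extraction to get an indiscernible sequence.

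For the existence of a generic type, call a formula $\phi(x)$ over $A$ implying $x\in G$ \emph{small} if $\dim(\phi)<\dim(G)$. By the axiom $\dim(X\cup Y)=\max\{\dim(X),\dim(Y)\}$, the collection of small sets is closed under finite unions, so the collection of \emph{large} (dimensional-generic) formulas is closed under finite intersections with $G$ in the following weak sense: the partial type
\[
\Sigma(x) \;=\; \{x\in G\}\;\cup\;\{\neg\phi(x) : \phi(x)\text{ is over }A,\ \phi(x)\to x\in G,\ \dim(\phi)<\dim(G)\}
\]
is finitely satisfiable. Indeed, for any finitely many small $\phi_1,\dots,\phi_n$, the union $\phi_1\cup\cdots\cup\phi_n$ is small, hence by the max axiom $\dim(G\setminus\bigcup_i\phi_i)=\dim(G)>-\infty$, so it is nonempty. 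Extend $\Sigma$ to a complete type $p\in S(A)$. If some $\psi\in p$ were not dimensional-generic, then $\psi\wedge(x\in G)$ would be a small formula over $A$, so its negation would lie in $\Sigma\subseteq p$, a contradiction. Hence $p$ is dimensional-generic.

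For the indiscernible sequence, I iterate: having chosen $a_0,\dots,a_{n-1}$, apply the previous paragraph with $A$ replaced by $A\cup\{a_0,\dots,a_{n-1}\}$ (still a set of parameters, still in a finite-dimensional theory) to obtain $a_n$ realising a dimensional-generic type over this set. This produces an infinite sequence $(a_n)_{n<\omega}$ in $G$ such that, for every $n$, $\operatorname{tp}(a_n/A\cup\{a_0,\dots,a_{n-1}\})$ is dimensional-generic. By the standard Ramsey/compactness extraction of indiscernibles, there is an $A$-indiscernible sequence $(b_i)_{i\in I}$ (over any ordered index set $I$) such that for every $i_1<\cdots<i_k$ there exist $j_1<\cdots<j_k$ with
\[
\operatorname{tp}(b_{i_1},\dots,b_{i_k}/A)=\operatorname{tp}(a_{j_1},\dots,a_{j_k}/A).
\]
Because $a_{j_k}$ is dimensional-generic over $A\cup\{a_{j_1},\dots,a_{j_{k-1}}\}$, the corresponding fact transfers to $(b_i)$: every formula in $\operatorname{tp}(b_i/A\cup\{b_{j}\}_{j<i})$ uses only finitely many parameters, so generic-ness at each finite level gives generic-ness over the whole initial segment.

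The only delicate point is to ensure that dimensional-genericity is indeed preserved by the Ramsey extraction; this is automatic once one observes that it is a property of the types of finite tuples, which are exactly what the extraction preserves. The rest of the argument is purely a matter of exploiting the $\max$ axiom to show finite additivity of smallness, which is where the finite-dimensional hypothesis is used.
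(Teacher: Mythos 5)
Your proposal is correct and follows essentially the same route as the paper: the paper also forms the partial type of complements of small (lower-dimensional) subsets of $G$, proves finite consistency via the axiom $\dim(X\cup Y)=\max\{\dim(X),\dim(Y)\}$, completes it and checks genericity, then iterates and extracts an indiscernible sequence by Ramsey, with genericity preserved because dimension depends only on the type of the parameter tuple. Your remark on why the Ramsey extraction preserves genericity is in fact slightly more explicit than the paper's own wording.
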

\begin{proof}
Assume that $G$ is $\emptyset$-definable.\\
For the first part, let $q$ be the partial type over $A$ given by $\{X\subseteq G:\ \dim(G-X)<\dim(G)\}$. We verify that it is finitely consistent. Assume not, then there exists a finite subfamily $\{X_i\}_{i\leq n}$ of $q$ such that $\bigcap_{i\leq n} X_i$ is empty. This implies that $G-\bigcap_{i\leq n}X_i=\bigcup_{i\leq n} G-X_i=G$. Therefore, $\dim(G)=\dim(\bigcup_{i\leq n} G-X_i)=\max_{i\leq n} \dim(G-X_i)$ by union. By hypothesis, $\dim(G)=\max_{i\leq n} \dim(G-X_i)<\dim(G)$, a contradiction. Therefore, let $p$ be a completion of $q$ in $S_A(G)$. Then, this type is dimensional-generic. Assume not, then there exists an $A$-definable set $X$ in $p$ such that $\dim(X)<\dim(G)$. By definition of $q$, $G-X\in q\subseteq p$ and so $X,G-X\in p$, contradicting the finite satisfability of $p$.\\
For the second part, apply iteratively the first part to obtain an infinite sequence $\{a_i\}_{i\in I}$ of elements in $G$ such that $tp(a_i/A\cup\{a_j\}_{j<i}\}$ is dimensional-generic. This sequence is also $A$-dimensional independent since $\dim(tp(a_i/A\cup \{a_j\}_{j<i})=\dim(G)\geq \dim(tp(a_i/A))$ since $a_i\in G$. The conclusion follows by Ramsey's Theorem.
\end{proof}
We verify that dimensional-generics are subgroup-generics.
\begin{lemma}\label{dimgen}
 A dimensional-generic is subgroup-generic.
\end{lemma}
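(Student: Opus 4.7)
The plan is to argue by contrapositive: assume $X$ is not subgroup-generic and show $\dim(X)<\dim(G)$. By hypothesis there exist finitely many translates $g_1H_1,\dots,g_nH_n$ of interpretable subgroups of infinite index in $G$ with $X\subseteq\bigcup_{i=1}^n g_iH_i$. Using the union axiom for dimension, $\dim(X)\le\max_i\dim(g_iH_i)$, and since left translation by $g_i$ is an interpretable bijection, the two fiber inequalities in the definition of dimension force $\dim(g_iH_i)=\dim(H_i)$. Hence it will suffice to prove the following subclaim: \emph{if $H\le G$ is interpretable of infinite index, then $\dim(H)<\dim(G)$.}

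To establish the subclaim, I would consider the interpretable quotient $G/H$ together with the canonical surjection $\pi:G\to G/H$. Every fiber $\pi^{-1}(gH)=gH$ has dimension exactly $\dim(H)$, so the two fiber axioms of the dimension function give
\[
\dim(G)=\dim(G/H)+\dim(H).
\]
Since $[G:H]$ is infinite, the interpretable set $G/H$ is infinite, and the axiom ``$\dim(Y)=0$ iff $Y$ is finite'' yields $\dim(G/H)\ge 1$. Therefore $\dim(H)\le\dim(G)-1<\dim(G)$, as desired.

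Combining the two steps, $\dim(X)\le\max_i\dim(H_i)<\dim(G)$, contradicting the assumption that $X$ is dimensional-generic. The only delicate point is the dimension formula for $G/H$: one must check that $G/H$ is genuinely interpretable (so that $\dim$ applies to it) and that all cosets have the same dimension, which follows from translation being an interpretable bijection and from the equality of dimension on fibers of constant size. Once this is in place the rest of the argument is a direct application of the axioms from the definition of a finite-dimensional theory.
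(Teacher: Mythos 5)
Your proof is correct and follows essentially the same route as the paper: the paper's one-line appeal to ``fibration'' and ``invariance'' is exactly your computation that $\dim(G)=\dim(G/H)+\dim(H)$ with $\dim(G/H)\geq 1$ for an infinite-index interpretable subgroup, together with translation-invariance and the union axiom. You simply spell out the finite union of translates and the coset-space fibration in more detail than the paper does.
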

\begin{proof}
Let $p$ be a dimensional-generic for a definable group $G$ of finite dimension. Assume, for a contradiction, that $p$ is not subgroup-generic. Then, by definition, there exists $\phi(x)\in p$ such that $\phi(G)$ is contained in a translate of an interpretable subgroup $H$ of infinite index in $G$. By fibration, $\dim(H)<\dim(G)$. Therefore, by invariance, $\dim(\phi(G))\leq \dim(H)<\dim(G)$, contradicting the dimensional-genericity. 
\end{proof}
\section{Bilinear quasi-forms}
We introduce bilinear quasi-forms, which are a fundamental instrument in the analysis of $\omega$-categorical groups and rings of finite dimension.
\begin{defn}
A \emph{bilinear quasi-form} $\lambda$ from $G,H$ to $K$ abelian groups is a partial function from $G\times H$ to $K$ such that, for every $g\in G$ and $h\in H$, the partial functions 
$$\lambda_g:=\lambda(g,\_):h\in \operatorname{dom}(\lambda(g,\_))\to \lambda(g,h)\in K$$
and 
$$\lambda'_h:=\lambda(\_,h):g\in \operatorname{dom}(\lambda(\_,h))\to \lambda(g,h)\in K$$
are partial endomorphisms. We denote $\operatorname{ann}_H(g)=\{h\in H:\ \lambda(g,h)=0_K\}$ and similarly $\operatorname{ann}_G(h)$ for $h\in H$.
\end{defn}
We define the almost annihilator of a bilinear quasi-form.
\begin{defn}
Let $\lambda$ be a bilinear quasi-form from $G\times H$ to $K$ and let $A\leq G$ and $B\leq H$. We denote by $\widetilde{\operatorname{ann}}_H(A)$ the \emph{almost annihilator of $A$ over $H$ with respect to $\lambda$} defined as
$$\widetilde{\operatorname{ann}}_H(A):=\{h\in H:\ \lambda(A,h)\text{ is finite}\}=\{h\in H:\ \operatorname{ann}_G(h)\apprge A\}.$$
Similarly, we define the \emph{almost annihilator of $B$ over $G$ with respect to $\lambda$} the subgroup
$$\widetilde{\operatorname{ann}}_G(B):=\{g\in G:\ \lambda(g,B)\text{ is finite}\}=\{g\in G:\ \operatorname{ann}_H(g)\apprge B\}.$$
If the quasi-form is clear, we call them almost annihilator of $A$ over $H$ and almost annihilator of $B$ over $G$, respectively. 
\end{defn}
It follows from Lemma \ref{boundedind} that, if $G,H,A,B$ and $\phi$ are definable in a finite-dimensional structure, the almost annihilators are definable.
\begin{lemma}\label{definability}
    Assume we are working in a finite-dimensional theory $T$. Let $\phi$ be a definable bilinear quasi-form from $G\times H$ to $K$ and $A,B$ definable subgroups of $G,H$ respectively. Then, $\widetilde{\operatorname{ann}}_H(A)$ and $\widetilde{\operatorname{ann}}_G(B)$ are definable.
\end{lemma}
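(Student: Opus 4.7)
The plan is to exhibit $\widetilde{\operatorname{ann}}_H(A)$ as the set of $h \in H$ for which a uniformly definable subgroup of $A$ has finite index, and then use Lemma \ref{boundedind} to replace "finite index" by "index at most $N$" for a fixed $N$. By the symmetry of the definitions, the same argument will give definability of $\widetilde{\operatorname{ann}}_G(B)$. For each $h \in H$ the set $\operatorname{ann}_G(h)$ is cut out by the formula $\lambda(x, h) = 0$, so $X_h := A \cap \operatorname{ann}_G(h)$ is a uniformly definable family of subgroups of $A$; since the partial endomorphism $\lambda'_h$ maps $A$ onto $\lambda(A, h)$ with kernel $X_h$, one has $h \in \widetilde{\operatorname{ann}}_H(A) \iff [A : X_h] < \infty$.

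The heart of the argument is to produce a uniform $N < \omega$ such that $[A : X_h] < \infty$ implies $[A : X_h] \leq N$. Apply Lemma \ref{boundedind} to $\{X_h\}_{h \in H}$ inside $A$ to obtain constants $n, d < \omega$ forbidding any chain $X_{h_1} \supseteq X_{h_1} \cap X_{h_2} \supseteq \cdots \supseteq X_{h_1} \cap \cdots \cap X_{h_n}$ with each successive index at least $d$. Suppose for contradiction that the finite indices $[A : X_h]$ are unbounded, and construct $h_1, h_2, \ldots$ greedily so that $Y_k := X_{h_1} \cap \cdots \cap X_{h_k}$ has finite index in $A$ and $[Y_{k-1} : Y_k] \geq d$ for all $k$. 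The induction step is possible because, if instead $[Y_k : Y_k \cap X_h] < d$ for every $h$ with $[A : X_h] < \infty$, then
$$[A : X_h] \leq [A : Y_k \cap X_h] = [A : Y_k] \cdot [Y_k : Y_k \cap X_h] < [A : Y_k] \cdot d,$$
yielding a uniform bound that contradicts unboundedness. Iterating $n$ times yields a chain violating Lemma \ref{boundedind}, so unboundedness fails.

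Given such $N$, the condition $[A : X_h] \leq N$ is expressed by the first-order formula
$$\exists a_1, \ldots, a_N \in A\ \forall a \in A\ \bigvee_{i=1}^{N} \lambda(a - a_i, h) = 0,$$
so $\widetilde{\operatorname{ann}}_H(A)$ is definable, and the same argument applied to the family $\{\operatorname{ann}_H(g) \cap B\}_{g \in G}$ inside $B$ gives the definability of $\widetilde{\operatorname{ann}}_G(B)$.

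The principal obstacle is the extraction of the uniform bound $N$: Lemma \ref{boundedind} directly controls only descending chains of intersections, not the index of a single $X_h$ in $A$. The greedy construction above is the standard bridge, and is essentially the uniform chain condition for finite-dimensional groups derived from Lemma \ref{boundedind} in \cite{invitti2025End}.
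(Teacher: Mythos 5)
Your proposal is correct and follows essentially the same route as the paper: apply Lemma \ref{boundedind} to the uniformly definable family of annihilators to obtain a uniform bound $N$ on the finite indices, and then define $\widetilde{\operatorname{ann}}_H(A)$ by the first-order condition that the index is at most $N$. The only difference is that you spell out, via the greedy chain construction, how the uniform bound is extracted from the chain condition of Lemma \ref{boundedind}, and that you measure the index of $A\cap\operatorname{ann}_G(h)$ inside $A$ rather than in $G$ --- details the paper's proof leaves implicit.
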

\begin{proof}
    From Lemma \ref{boundedind} applied the uniformly definable family of subgroups $\{\operatorname{ann}_G(h)\}_{h\in \widetilde{\operatorname{ann}}_H(A)}$, there exists $N\in \mathbb{N}$ such that $N>|G:\operatorname{ann}_G(h)|$. This implies that $\widetilde{\operatorname{ann}}_H(A)=\{h\in H:\ |G:\operatorname{ann}_G(h)|\leq n\}$ is definable. The conclusion for $\widetilde{\operatorname{ann}}_G(B)$ follows similarly.
\end{proof}
We introduce the notions of triviality, almost triviality, and virtual almost triviality.
\begin{defn}
    A bilinear quasi-form $\lambda$ is \emph{trivial} if $\lambda(g,h)=0$ for all $g\in G,h\in H$. It is \emph{almost trivial} if $\lambda(G,H)\leq S$ with $S$ a finite subgroup of $K$. $\lambda$ is \emph{virtually almost trivial} if there exists $G_0\leq G$ and $H_0\leq H$ subgroups of finite index such that $\lambda_{|G_0\times H_0}$ is almost trivial.
\end{defn}
We have a characterization of virtually almost trivial bilinear quasi-form given by \cite[Corollary 5.11]{dobrowolski2020omega}.
\begin{lemma}\label{almosttrivial}
    Let $\lambda:G\times H\to K$ be a definable bilinear quasi-form. Then, TFAE:
    \begin{itemize}
        \item $\lambda$ is virtually almost trivial;
        \item $\widetilde{\operatorname{ann}}_G(H)\apprge G$;
        \item $\widetilde{\operatorname{ann}}_H(G)\apprge H$.
    \end{itemize}
\end{lemma}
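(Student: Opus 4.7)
My approach is to reinterpret the bilinear quasi-form $\lambda$ as a commutator in a Heisenberg-type ambient group, and then invoke the group-theoretic results already available in the paper, namely the symmetry of almost centralisers (Lemma \ref{sym}) and the finiteness of the action commutator (Lemma \ref{lemma 4}), rather than argue by hand with dimension estimates. The implications (1) $\Rightarrow$ (2) and (1) $\Rightarrow$ (3) are immediate: if $\lambda(G_0 \times H_0) \subseteq S$ is finite for some finite-index $G_0 \leq G$, $H_0 \leq H$, then for any $g \in G_0$ the image $\lambda(g, H)$ is covered by finitely many translates of $\lambda(g, H_0) \subseteq S$ and is therefore finite, so $G_0 \subseteq \widetilde{\operatorname{ann}}_G(H)$; the argument for (3) is symmetric.

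For the non-trivial directions I introduce the definable group $E := G \oplus H \oplus K$ with multiplication
$$ (g_1, h_1, k_1) \cdot (g_2, h_2, k_2) := (g_1 + g_2,\ h_1 + h_2,\ k_1 + k_2 + \lambda(g_1, h_2)). $$
Bilinearity of $\lambda$ makes $E$ a group in which $K$ is central and $H \oplus K$ is an abelian normal subgroup on which $G$ acts by conjugation via $(h, k) \mapsto (h, k + \lambda(g, h))$, so the bilinear form is literally realised as the commutator $[g, (0, h, k)]_E = (0, 0, \lambda(g, h))$. A short computation then identifies
$$ \widetilde{C}_E(H) = \widetilde{\operatorname{ann}}_G(H) \oplus H \oplus K, \qquad \widetilde{C}_E(G) = G \oplus \widetilde{\operatorname{ann}}_H(G) \oplus K, $$
and, for the induced action, $\widetilde{C}_G(H \oplus K) = \widetilde{\operatorname{ann}}_G(H)$ and $\widetilde{C}_{H \oplus K}(G) = \widetilde{\operatorname{ann}}_H(G) \oplus K$.

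The equivalence (2) $\Leftrightarrow$ (3) now drops out of Lemma \ref{sym} applied inside $E$ with $N = \{0\}$ to the pair $G, H$: intersecting the identities above with $G$ and $H$ respectively translates the conditions $G \apprle \widetilde{C}_E(H)$ and $H \apprle \widetilde{C}_E(G)$ into (2) and (3). For (2) $\Rightarrow$ (1) I apply Lemma \ref{lemma 4} to the action of the (abelian, hence almost abelian) group $G$ on the (abelian) group $H \oplus K$: the lemma yields that the commutator subgroup $[\widetilde{\operatorname{ann}}_G(H),\ \widetilde{\operatorname{ann}}_H(G) \oplus K]$ is finite, and via the identification $[g, (0, h, k)] = (0, 0, \lambda(g, h))$ this commutator is exactly the subgroup of $K$ generated by $\lambda(\widetilde{\operatorname{ann}}_G(H), \widetilde{\operatorname{ann}}_H(G))$. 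Taking $G_0 := \widetilde{\operatorname{ann}}_G(H)$ and $H_0 := \widetilde{\operatorname{ann}}_H(G)$, which are both of finite index by (2) together with the just-established (3), and letting $S$ be this finite subgroup, gives (1).

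The main obstacle I anticipate is not conceptual but technical: since $\lambda$ is only a partial function on $G \times H$, some care is needed to see that the Heisenberg multiplication on $E$ is genuinely a definable group operation (or to first restrict to the appropriate definable sub-domain where $\lambda$ is totally defined), so that Lemmas \ref{sym} and \ref{lemma 4} legitimately apply in the ambient finite-dimensional theory.
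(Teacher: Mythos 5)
Your strategy is genuinely different from the paper's, which in fact offers no proof at all: Lemma \ref{almosttrivial} is quoted from \cite[Corollary 5.11]{dobrowolski2020omega}, where it is proved by the calculus of additive relations and quasi-homomorphisms under $\omega$-categoricity. Your route --- realising $\lambda$ as the commutator of the definable central extension $E=G\oplus H\oplus K$, getting $(2)\Leftrightarrow(3)$ from Lemma \ref{sym} applied in $E$, and getting $(2)\wedge(3)\Rightarrow(1)$ from Lemma \ref{lemma 4} applied to the action of $G$ on $H\oplus K$ (whose almost centralisers are indeed $\widetilde{\operatorname{ann}}_G(H)$ and $\widetilde{\operatorname{ann}}_H(G)\oplus K$, and whose commutator is exactly the subgroup of $K$ generated by $\lambda(\widetilde{\operatorname{ann}}_G(H),\widetilde{\operatorname{ann}}_H(G))$) --- is correct \emph{when $\lambda$ is an everywhere-defined definable bilinear map}, and it is arguably cleaner, since it recycles group-theoretic results already in the paper. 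Two caveats: the lemma as literally stated, with no tameness hypothesis, is false (the coordinatewise form $\lambda(x,y)=\sum_i x_iy_ie_i$ from $(\bigoplus_{i<\omega}\mathbb{F}_p)^2$ to $\bigoplus_{i<\omega}\mathbb{F}_p$ has $\widetilde{\operatorname{ann}}_G(H)=G$ and $\widetilde{\operatorname{ann}}_H(G)=H$ but is not virtually almost trivial), so any correct proof must use the ambient context; yours imports it implicitly through Lemma \ref{lemma 4}, whose proof relies on Lemma \ref{boundedind} and on $\omega$-categoricity-style boundedness --- acceptable here, but you should say so explicitly rather than present the argument as hypothesis-free.

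The genuine gap is precisely the point you set aside as ``technical'': partiality. A bilinear quasi-form is only a partial function whose slices $\lambda_g$, $\lambda'_h$ are partial endomorphisms, with $\operatorname{dom}(\lambda_g)\leq H$ varying with $g$; nothing provides a definable product $G_1\times H_1$ of finite-index subgroups on which $\lambda$ is total (even if each $\operatorname{dom}(\lambda_g)$ has finite index, the intersection over infinitely many $g$ may have infinite index). Without totality, your multiplication on $E$ is simply undefined (associativity already requires $\lambda(g_1,h_2)$, $\lambda(g_1,h_3)$, $\lambda(g_2,h_3)$ all to exist), and $g\cdot(h,k)=(h,k+\lambda(g,h))$ is not an action of $G$ on $H\oplus K$; hence neither Lemma \ref{sym} nor Lemma \ref{lemma 4} can be invoked as stated. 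To close this you must either restrict the statement to total forms --- which would still cover the applications in Section 5, where $\lambda$ is a commutator map or the ring multiplication --- or give an actual reduction of the partial case (for instance, rerunning the symmetry and the Lemma \ref{lemma 4} arguments directly at the level of quasi-forms, which is in essence what \cite{dobrowolski2020omega} do). As it stands, the claim that the obstacle is ``not conceptual'' is not justified, and this is the one step of your proposal that does not yet exist.
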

From now on, we assume that the bilinear quasi-forms are definable in a finite-dimensional theory. We introduce the notion of reduced dimension of a bilinear quasi-form.
\begin{defn}
    Let $\lambda:G\times H\to K$ be a definable bilinear quasi-form of finite dimension. Then, we define
    \begin{itemize}
        \item the \emph{reduced dimension of $G$}, denoted as $\operatorname{rdim}_{\lambda}(G)$, as $\max\{\operatorname{dim}(G/\operatorname{ann}_G(\overline{h}))\}_{\overline{h} \text{ finite tuple in } H}$;
        \item the \emph{reduced dimension of $H$} denoted as $\operatorname{rdim}_{\lambda}(H)$ as $\max\{\operatorname{dim}(H/\operatorname{ann}_H(\overline{g}))\}_{\overline{g} \text{ finite tuple in } G}$;
        \item the \emph{reduced dimension of $\lambda$} denoted as $\operatorname{rdim}(\lambda)$ as $\operatorname{rdim}_{\lambda}(G)+\operatorname{rdim}_{\lambda}(H)+\operatorname{dim}(K)$.
    \end{itemize}
\end{defn}
We also introduce the notion of induced bilinear quasi-form.
\begin{defn}
    A bilinear quasi-form $\overline{\lambda}:A\times B\to D/C$ is \emph{induced from} $\lambda:G\times H\to K$ if $A\leq G,B\leq H,C,D\leq K$ and $\pi\circ \lambda:\lambda^{-1}(D)\cap A\times B\to D/C$ coincides with $\overline{\lambda}$ where $\pi$ is the projection from $D$ to $D/C$.
\end{defn}
Observe that, in order to be $\overline{\lambda}$ well-defined, $A\times B$ must be almost contained in $\lambda^{-1}(D)$.\\
We verify that, given $\lambda$ a bilinear quasi-form, the reduced dimension of any induced bilinear quasi-form is less than $\operatorname{rdim}(\lambda)$.
\begin{lemma}\label{induced}
    if $\overline{\lambda}:A\times B\rightarrow D/C$ is induced from $\lambda$, then $\operatorname{rdim}_{\overline{\lambda}}(B)\leq \operatorname{rdim}_{\lambda}(H)$ and $\operatorname{dim}(D/C)\leq \operatorname{dim}(K)$. In particular, $\operatorname{rdim}(\overline{\lambda})\leq \operatorname{rdim}(\lambda)$.
\end{lemma}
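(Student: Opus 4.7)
The plan is to compare the annihilators for $\overline{\lambda}$ with the annihilators for $\lambda$ tuple-by-tuple, and then to use the dimension axioms (monotonicity under surjection and under subset-inclusion). The key observation is that, for any finite tuple $\overline{a}=(a_1,\dots,a_n)$ in $A\subseteq G$, one has the inclusion
\[
\operatorname{ann}_H(\overline{a})\cap B\;\subseteq\;\operatorname{ann}_B(\overline{a}),
\]
where the left-hand annihilator is computed with respect to $\lambda$ and the right-hand one with respect to $\overline{\lambda}$: if $\lambda(a_i,b)=0_K$ for all $i$, then in particular $\lambda(a_i,b)\in C$, so $\overline{\lambda}(a_i,b)=0_{D/C}$.

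From this inclusion I get a surjection $B/(B\cap\operatorname{ann}_H(\overline{a}))\twoheadrightarrow B/\operatorname{ann}_B(\overline{a})$, and the dimension axioms give
\[
\dim\bigl(B/\operatorname{ann}_B(\overline{a})\bigr)\;\leq\;\dim\bigl(B/(B\cap\operatorname{ann}_H(\overline{a}))\bigr).
\]
By the second isomorphism theorem, $B/(B\cap\operatorname{ann}_H(\overline{a}))$ injects into $H/\operatorname{ann}_H(\overline{a})$, so another application of monotonicity yields
\[
\dim\bigl(B/(B\cap\operatorname{ann}_H(\overline{a}))\bigr)\;\leq\;\dim\bigl(H/\operatorname{ann}_H(\overline{a})\bigr)\;\leq\;\operatorname{rdim}_\lambda(H).
\]
Taking the maximum over all finite tuples $\overline{a}$ from $A$ produces $\operatorname{rdim}_{\overline{\lambda}}(B)\leq\operatorname{rdim}_\lambda(H)$, which is the first assertion.

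The inequality $\dim(D/C)\leq\dim(K)$ is even easier: $D\leq K$ gives $\dim(D)\leq\dim(K)$, and the quotient map $D\twoheadrightarrow D/C$ together with the fibration axiom (or simply monotonicity) gives $\dim(D/C)\leq\dim(D)$. For the final ``In particular'' clause, I repeat the tuple-annihilator argument verbatim with the roles of the two factors swapped to obtain $\operatorname{rdim}_{\overline{\lambda}}(A)\leq\operatorname{rdim}_\lambda(G)$, and then sum the three inequalities:
\[
\operatorname{rdim}(\overline{\lambda})=\operatorname{rdim}_{\overline{\lambda}}(A)+\operatorname{rdim}_{\overline{\lambda}}(B)+\dim(D/C)\leq\operatorname{rdim}_\lambda(G)+\operatorname{rdim}_\lambda(H)+\dim(K)=\operatorname{rdim}(\lambda).
\]

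There is no real obstacle here: the whole proof is a composition of elementary group-theoretic inclusions fed into the dimension axioms. The only point that deserves a moment of care is making sure the domain restriction in the definition of $\overline{\lambda}$ (which is only defined on $\lambda^{-1}(D)\cap A\times B$) does not spoil the annihilator comparison. Since $0_K\in C\leq D$, any pair $(a_i,b)$ with $\lambda(a_i,b)=0_K$ automatically lies in $\operatorname{dom}(\overline{\lambda})$, and any pair with $\overline{\lambda}(a_i,b)=0_{D/C}$ satisfies $\lambda(a_i,b)\in C\subseteq D$, so the inclusions used above are genuine and nothing pathological happens on the boundary of the domain.
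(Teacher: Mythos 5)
Your proof is correct and takes essentially the same approach as the paper's: both compare the kernel/annihilator of $\lambda_{\overline{a}}$ with that of $\overline{\lambda}_{\overline{a}}$ (the paper via the composed map $\lambda^C_a$ and its kernels, you via the inclusion $\operatorname{ann}_H(\overline{a})\cap B\subseteq\operatorname{ann}_B(\overline{a})$) and then feed these inclusions into monotonicity of dimension under quotients and subgroups. Your explicit treatment of the partial domain of $\overline{\lambda}$ and of the symmetric inequality $\operatorname{rdim}_{\overline{\lambda}}(A)\leq\operatorname{rdim}_{\lambda}(G)$ needed for the ``in particular'' clause only spells out what the paper leaves implicit.
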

\begin{proof}
    The latter conclusion follows easily from the previous one.\\
    For the first point, it is sufficient to prove that 
    $$\operatorname{max}\{\operatorname{dim}(H/\operatorname{Ker}(\lambda_g))\}_{g\in G}\geq \operatorname{max}\{\operatorname{dim}(B/\operatorname{Ker}(\overline{\lambda}_a))\}_{a\in A}.$$ 
    For every $\overline{a}\in A$, denote $\lambda^C_a$ the function that sends $h\in H$ to $\lambda_a(h)+C$. Then, 
    $$\operatorname{dim}(H/\ker(\lambda_a))\geq \operatorname{dim}(H/\ker(\lambda_a^C))\geq \operatorname{\operatorname{dim}(A/\ker(\lambda_a^C\cap A))}=\operatorname{dim}(A/\ker(\overline{\lambda}_a)).$$
    This completes the proof. 
\end{proof}
Finally, we recall \cite[Lemma 4.6]{dobrowolski2020omega}.
\begin{lemma}\label{Lemm 4.6}
    Let $G$ be an $\omega$-categorical abelian group and $f$ a definable quasi-endomorphism of $G$. Then, there is $n<\omega$ such that $\mathrm{im}(f^{{\circ}n})\widetilde{\oplus}\operatorname{ker}(f^{\circ n})\sim G$.
\end{lemma}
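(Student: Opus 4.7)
The plan is to prove an $\omega$-categorical analogue of Fitting's lemma. The key observation is that in an $\omega$-categorical group, the uniformly definable families $\{\operatorname{ker}(f^{\circ n})\}_{n<\omega}$ and $\{\mathrm{im}(f^{\circ n})\}_{n<\omega}$, both definable over the finite parameter set defining $f$, can take only finitely many values by Theorem \ref{RyllNard}. The first chain is ascending and the second descending, so both must stabilize: there exists $n<\omega$ such that $\operatorname{ker}(f^{\circ m}) \sim \operatorname{ker}(f^{\circ n})$ and $\mathrm{im}(f^{\circ m}) \sim \mathrm{im}(f^{\circ n})$ for every $m\geq n$. Fix such an $n$ and set $N := \operatorname{ker}(f^{\circ n})$ and $I := \mathrm{im}(f^{\circ n})$.

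To show that $I\cap N$ is finite, I would observe that any $x\in I\cap N$ has the form $x=f^{\circ n}(y)$ for some $y$ in the domain of $f^{\circ n}$, while $f^{\circ n}(x)=0$ forces $y\in \operatorname{ker}(f^{\circ 2n})$. Since $\operatorname{ker}(f^{\circ 2n})\sim N$, up to finitely many cosets one has $y\in N$, so $x=f^{\circ n}(y)$ lies in a finite subset of $I$. This establishes the almost-directness of $I\widetilde{\oplus} N$.

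For the final commensurability $I+N\sim G$, I would analyze the restriction $f^{\circ n}|_I: I\to \mathrm{im}(f^{\circ 2n})$. Its kernel is $I\cap N$, finite by the previous step, and its image is $\mathrm{im}(f^{\circ 2n})\sim I$; therefore $f^{\circ n}$ maps $I$ onto a subgroup of finite index in $I$. Consequently, for every $g$ in the (finite-index) domain of $f^{\circ n}$ one can choose $y\in I$ with $f^{\circ n}(y)=f^{\circ n}(g)$ up to a finite set of representatives, giving $g-y\in N$ modulo finite error, hence $g\in I+N$ modulo a finite coset. This yields $I+N\sim G$ and completes the decomposition.

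The main obstacle I would anticipate is the partial-domain aspect of a quasi-endomorphism: iterating $f$ shrinks the domain, and one must ensure $\mathrm{dom}(f^{\circ n})$ remains of finite index in $G$ for all $n$. This follows again from $\omega$-categoricity, since the family $\{\mathrm{dom}(f^{\circ n})\}_{n<\omega}$ is uniformly definable and therefore takes only finitely many values, forcing the descending chain to stabilize at a finite-index subgroup. Once this book-keeping is settled, the rest of the argument is a clean Fitting-style decomposition carried out in the category of definable subgroups modulo commensurability.
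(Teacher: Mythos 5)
The paper does not prove this statement at all: it is quoted verbatim from the cited source (\cite[Lemma 4.6]{dobrowolski2020omega}), so there is no in-paper proof to compare against. Your argument is essentially the standard (and, to my knowledge, the intended) proof: over the finite parameter set defining $f$ there are only finitely many definable subgroups of $G\times G$ by $\omega$-categoricity, so the increasing chain of kernels and the decreasing chain of images of the iterates $f^{\circ n}$ stabilize, and the Fitting-style computation then gives both the finiteness of $\mathrm{im}(f^{\circ n})\cap\operatorname{ker}(f^{\circ n})$ and the finite index of their sum (via covering $\operatorname{dom}(f^{\circ n})$ by finitely many cosets of $\mathrm{im}(f^{\circ n})+\operatorname{ker}(f^{\circ n})$). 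Two small corrections to your bookkeeping: the family $\{f^{\circ n}\}_n$ is not literally uniformly definable (the defining formulas grow with $n$), so the correct invocation is that an $\omega$-categorical theory has only finitely many definable sets in each arity over a fixed finite parameter set; and the finite index of $\operatorname{dom}(f^{\circ n})$ does not follow from stabilization of that chain (a stabilized descending chain can have infinite index) but from induction, since $f$ induces a genuine homomorphism $\operatorname{dom}(f)\to G/\operatorname{kat}(f)$ with $\operatorname{kat}(f)$ finite, so preimages of finite-index subgroups have finite index. With those fixes the proposal is correct.
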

\section{Principal indiscernible sequences}
We introduce principal dimensional-generic indiscernible sequences. As proven in Lemma \ref{dimgen}, any dimensional-generic is subgroup-generic. Therefore, these sequences are particular examples of principal subgroup-generic sequences introduced in \cite{dobrowolski2020omega}. On the other hand, the existence of principal indiscernibles finite-dimensional sequences will substantially simplify the proof of Theorem \ref{VirtTriv}.
\begin{defn}
    Let $G$ be an infinite definable group and $A$ a set of parameters. An $A$-indiscernible sequence $\langle (g_i,\overline{a}_i);i\in I\rangle$ with $g_i\in G$ for each $i\in I$ is \emph{principal indiscernible} if, for any $i\in I$ and $A_i=A\cup\{g_j,\overline{a}_j:j\not=i\}$, whenever $C$ is an $A_i$-definable coset of some subgroup $H$ and $g_i\in C$, then $g_i\in H^0_{A_i}$. A sequence $\{g_i\}_{i\in I}$ is \emph{principal indiscernible dimensional-generic} if it is principal, indiscernible, and dimensional-generic.
\end{defn}
We modify slightly \cite[Proposition 6.4]{dobrowolski2020omega}. 
\begin{proposition}\label{Prinseq}
   Let $\langle (g_i,\overline{a}_i):i\in \mathbb{Q}\cup(\mathbb{Q}+\epsilon)\rangle$, with $\epsilon$ an infinitesimal, be an $A$-indiscernible sequence. Then, the sequence $\langle (h_i,\overline{a}_i\overline{a}_{i+\epsilon}):i\in \mathbb{Q}\rangle$ with $h_i=g^{-1}_{i+\epsilon}g_i$ is principal indiscernible. If, moreover, $\langle (g_i,\overline{a}_i):i\in \mathbb{Q}\cup(\mathbb{Q}+\epsilon)\rangle$ is dimensional-generic, then also $\langle (h_i,\overline{a}_i\overline{a}_{i+\epsilon}):i\in \mathbb{Q}\rangle$ is dimensional-generic. In particular, principal indiscernible dimensional-generic sequences exist.
\end{proposition}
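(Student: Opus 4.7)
My plan is to follow the proof of \cite[Proposition 6.4]{dobrowolski2020omega}, which handles the principal subgroup-generic case, supplementing it with a dimension count to upgrade the conclusion to dimensional-generic; the existence assertion then follows by feeding an $A$-indiscernible dimensional-generic sequence produced by Lemma \ref{ExistGen} into the construction. Indiscernibility of the new sequence is automatic: it is the image of the original $A$-indiscernible sequence under the fixed, order-preserving, $\emptyset$-definable operation that sends $(g_i,g_{i+\epsilon},\overline a_i,\overline a_{i+\epsilon})$ to $(g_{i+\epsilon}^{-1}g_i,\overline a_i\overline a_{i+\epsilon})$.

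For the dimensional-generic property, fix $i\in\mathbb{Q}$ and set $B=A\cup\{(g_j,g_{j+\epsilon},\overline a_j,\overline a_{j+\epsilon}):j<i\}$. By hypothesis $\dim(g_i/B)=\dim(G)$ and $\dim(g_{i+\epsilon}/B\cup\{g_i\})=\dim(G)$, so the fibration axioms give $\dim((g_i,g_{i+\epsilon})/B)=2\dim(G)$. The map $(x,y)\mapsto y^{-1}x$ has every fiber $\{(yh,y):y\in G\}$ of dimension exactly $\dim(G)$, so a further application of fibration yields $\dim(h_i/B)=\dim(G)$. Since the prior tuples $(h_j,\overline a_j\overline a_{j+\epsilon})$ for $j<i$ lie in $\mathrm{dcl}(B)$, this establishes dimensional-genericity of $h_i$ over them.

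The heart of the argument is the principal property. Setting $A_i'=A\cup\{(g_j,g_{j+\epsilon},\overline a_j,\overline a_{j+\epsilon}):j\neq i\}$, we have $A_i\subseteq\mathrm{dcl}(A_i')$, so every $A_i$-definable subgroup or coset is $A_i'$-definable. Suppose $h_i$ lies in an $A_i$-definable coset of an $A_i$-definable subgroup $H$. For any $A_i$-definable finite-index subgroup $H_0\leq H$, the goal reduces to showing $g_i$ and $g_{i+\epsilon}$ lie in the same coset of $H_0$, which forces $h_i=g_{i+\epsilon}^{-1}g_i\in H_0$, and intersecting over all such $H_0$ yields $h_i\in H^0_{A_i}$. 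I would produce the coset identity by compactness: enlarge the original $A$-indiscernible sequence to insert arbitrarily many intermediate positions strictly between $i$ and $i+\epsilon$ (legitimate because $\mathbb{Q}\cup(\mathbb{Q}+\epsilon)$ embeds order-preservingly into any denser dense linear order) and then apply Ramsey to extract an $A_i'$-indiscernible subsequence within the column at $i$, containing $g_i$ and $g_{i+\epsilon}$. Projecting onto the finite quotient by $H_0$ and invoking pigeonhole forces the projection to be constant, whence $g_iH_0=g_{i+\epsilon}H_0$.

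The main obstacle is executing this last step cleanly: one must arrange the extended subsequence so that both $g_i$ and $g_{i+\epsilon}$ appear as genuine members of an $A_i'$-indiscernible subsequence (not merely as boundary limits), so that the pigeonhole conclusion applies to them directly. Once the principal property is secured, existence is immediate: Lemma \ref{ExistGen} produces an $A$-indiscernible dimensional-generic sequence indexed by $\mathbb{Q}\cup(\mathbb{Q}+\epsilon)$, and the construction above outputs a principal indiscernible dimensional-generic sequence $\langle(h_i,\overline a_i\overline a_{i+\epsilon}):i\in\mathbb{Q}\rangle$.
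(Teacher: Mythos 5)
Your overall route is the paper's: the paper obtains the principal part by simply invoking \cite[Proposition 6.4]{dobrowolski2020omega}, gets the dimensional-generic part from the fact that genericity is preserved by the group operations, and deduces existence from Lemma \ref{ExistGen}. Two points in the details you add need repair. For the generic part, the identity $\operatorname{dim}\big((g_i,g_{i+\epsilon})/B\big)=2\operatorname{dim}(G)$ is not one of the axioms of a finite-dimensional theory: the lower bound requires knowing that the set of first coordinates of a $B$-definable set whose fibre has full dimension is again definable (true here, but it needs the invariance axiom plus $\omega$-categoricity, and you do not argue it). It is also unnecessary: since $g_i\in B$, the map $y\mapsto y^{-1}g_i$ is a $B$-definable bijection of $G$, so any $B$-definable set of dimension $<\operatorname{dim}(G)$ containing $h_i$ pulls back to one containing $g_{i+\epsilon}$, contradicting genericity of $g_{i+\epsilon}$ over $B$. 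This one-line invariance argument (inversion and translation by an element of the base) is exactly what the paper does.

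For the principal part, your pigeonhole step is not yet well-posed: $H_0$ has finite index in $H$, not in $G$, so ``projecting onto the finite quotient by $H_0$'' only makes sense once you know the relevant elements lie in a single coset of $H$; in particular you must first show that the $A_i$-definable coset $C=cH$ containing $h_i$ is $H$ itself. That step is where the work is: stretch the $A$-indiscernible sequence by compactness to insert points $t$ with $i<t<i+\epsilon$; since no parameter of $A_i$ comes from the interval $(i,i+\epsilon)$, the pairs $(g_i,g_t)$, $(g_t,g_{i+\epsilon})$, $(g_i,g_{i+\epsilon})$ have the same type over $A_i$, so $g_t^{-1}g_i$, $g_{i+\epsilon}^{-1}g_t$ and their product $h_i$ all lie in $C$, which forces $c\in H$. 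Only then do all differences $g_{t_k}^{-1}g_{t_0}$ lie in $H$, and with more than $|H:H_0|$ inserted points the pigeonhole on cosets of $H_0$ in $H$ produces $k<l$ with $g_{t_l}^{-1}g_{t_k}\in H_0$, whence $h_i\in H_0$ by equality of the pair types over $A_i$. Note also that the ``main obstacle'' you flag is not one: you never need an $A_i'$-indiscernible subsequence containing $g_i$ and $g_{i+\epsilon}$ as members (nor Ramsey); realizing the EM-type of the original sequence over $A$ on a denser order, so that the given sequence is a genuine subsequence, and using only equality of types of pairs over $A_i$ suffices. With these repairs your argument is precisely the proof of \cite[Proposition 6.4]{dobrowolski2020omega}, which the paper cites rather than reproves.
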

\begin{proof}
    The first part follows from \cite[Proposition 6.4]{dobrowolski2020omega}. The second part by the fact that if $\operatorname{tp}(g_{i+\epsilon}/A,\{g_j,\overline{a}_j:j\leq i\})$ is dimensional-generic, also $\operatorname{tp}(g^{-1}_{i+\epsilon}/A,\{g_j,\overline{a}_j:j\leq i\})$ and $\operatorname{tp}(g^{-1}_{i+\epsilon}g_i/A,\{g_j,\overline{a}_j:j\leq i\})$ are dimensional-generic. From Lemma \ref{ExistGen}, there exists an indiscernible dimensional independent sequence. Therefore, applying the construction of the first part, we may obtain a principal indiscernible dimensional-generic sequence. This completes the proof of the Lemma.
\end{proof}
\section{Virtual almost triviality}
We now prove Theorem \ref{VirtTriv}. By Lemma \ref{almosttrivial}, it is sufficient to show that one of the two almost annihilators is of finite index in $G$ or $H$, respectively.\\
We prove Theorem \ref{VirtTriv} by induction on the reduced dimension of the bilinear quasi-form $\lambda$. Since for dimension $0$, the conclusion is clear, we may assume that $\operatorname{rdim}(\lambda)=N$ and that any bilinear quasi-form of reduced dimension strictly less than $N$ is virtually almost trivial.\\
We need some preliminary lemmas.
\begin{lemma}\label{Comparability}
Let $\lambda:G\times H\to K$ be a definable bilinear quasi-form, and that $G,H,K$ are $\omega$-categorical groups of finite dimension.\\
Assume that any induced quasi-form of strictly smaller reduced dimension is virtually almost trivial. Let $\langle y_i\rangle_{i\in \mathbb{Q}}$ be a principal indiscernible sequence in $G$ over $\emptyset$. Then, $\mathrm{im}(\lambda_{y_i})$ and $\mathrm{im}(\lambda_{y_j})$ are comparable. The same holds for the annihilators.
\end{lemma}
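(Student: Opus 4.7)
The plan is to treat the two comparability statements in parallel: in each case I exhibit an induced bilinear quasi-form of strictly smaller reduced dimension, invoke the inductive hypothesis together with Lemma \ref{almosttrivial} to get virtual almost triviality, and then use principal indiscernibility to plant $y_i$ inside the resulting almost-annihilator. Throughout, let $\Gamma_i=\{y_k:k\neq i\}$; by principal indiscernibility, $y_i$ lies in every $\Gamma_i$-definable subgroup of $G$ of finite index.

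For the images, first dispose of the case $\operatorname{im}(\lambda_{y_j})$ finite: by indiscernibility $\operatorname{im}(\lambda_{y_i})$ is finite too, and comparability is immediate. Otherwise, consider the induced form $\overline{\lambda}:G\times H\to K/\operatorname{im}(\lambda_{y_j})$, which is $\Gamma_i$-definable. Since $\operatorname{dim}(K/\operatorname{im}(\lambda_{y_j}))<\operatorname{dim}(K)$, Lemma \ref{induced} gives $\operatorname{rdim}(\overline{\lambda})<\operatorname{rdim}(\lambda)$. By the inductive hypothesis $\overline{\lambda}$ is virtually almost trivial, and Lemma \ref{almosttrivial} tells us that its $\widetilde{\operatorname{ann}}_G(H)$ — a $\Gamma_i$-definable subgroup of $G$ by Lemma \ref{definability} — has finite index in $G$. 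Principal indiscernibility then forces $y_i$ into this subgroup, which unpacks to $\operatorname{im}(\lambda_{y_i})\apprle\operatorname{im}(\lambda_{y_j})$.

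For the annihilators, set $A_j=\operatorname{ann}_H(y_j)$ and first dispose of the case $A_j$ of finite index in $H$ (then $A_j\cap A_i$ has finite index in $A_i$, so $A_j\apprge A_i$). Otherwise, consider the restricted induced form $\lambda|_{G\times A_j}:G\times A_j\to K$. The drop in reduced dimension now comes from the $H$-side rather than the $K$-side: for any tuple $\overline{g}$, additivity of dimension together with $\operatorname{ann}_H(y_j,\overline{g})\leq A_j\leq H$ yields
\[
\operatorname{dim}(A_j/\operatorname{ann}_H(y_j,\overline{g}))=\operatorname{dim}(H/\operatorname{ann}_H(y_j,\overline{g}))-\operatorname{dim}(H/A_j),
\]
so $\operatorname{rdim}_{\lambda|}(A_j)\leq\operatorname{rdim}_\lambda(H)-\operatorname{dim}(H/A_j)<\operatorname{rdim}_\lambda(H)$. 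The inductive hypothesis applied to $\lambda|_{G\times A_j}$ then yields virtual almost triviality, hence the $\Gamma_i$-definable subgroup $\widetilde{\operatorname{ann}}_G(A_j)$ has finite index in $G$; principal indiscernibility puts $y_i$ inside, so $\lambda(y_i,A_j)$ is finite, which is $|A_j:A_j\cap A_i|<\infty$, i.e.\ $A_i\apprge A_j$.

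The main obstacle I expect is the dimension bookkeeping in the annihilator case. The image case is frictionless because the reduction $\operatorname{dim}(K/\operatorname{im}(\lambda_{y_j}))<\operatorname{dim}(K)$ is immediate. For annihilators one must observe that any tuple $\overline{g}$ computing $\operatorname{rdim}_\lambda(H)$ can, without loss, be taken to contain $y_j$, and then translate the codimension of $A_j$ in $H$ (ensured by $\operatorname{im}(\lambda_{y_j})$ infinite) into a strict drop in the reduced dimension of the restricted form — this is the one place where careful dimension additivity is needed.
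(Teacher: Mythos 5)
Your proof is correct, but it takes a genuinely more direct route than the paper. The paper never quotients $K$ by a single image: it first builds a finite sum $C=\sum_{i\le n}\mathrm{im}(\lambda_{y_{l_i}})$ of maximal dimension with minimally many summands (resp.\ a finite intersection of annihilators of minimal dimension), introduces the auxiliary almost-annihilator subgroups $A\le G$, $B\le H$, and applies the inductive hypothesis to the induced forms $A\times B\to C/(\mathrm{im}(\lambda_{y_{l_i}})\cap C)$ (resp.\ $A\times\operatorname{ann}_H(y_{l_i})\to K$), extracting the strict drop in reduced dimension from the extremality of the sum/intersection; this structure is inherited from the finite-burden argument of Dobrowolski--Wagner, where quotienting by one image need not decrease the inductive measure. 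You instead exploit the dimension calculus directly: $\dim(K/\mathrm{im}(\lambda_{y_j}))<\dim(K)$ once $\mathrm{im}(\lambda_{y_j})$ is infinite, and on the annihilator side the additivity $\dim(H/N)=\dim(H/\operatorname{ann}_H(y_j))+\dim(\operatorname{ann}_H(y_j)/N)$ coming from the fibration axioms, so a single quotient $G\times H\to K/\mathrm{im}(\lambda_{y_j})$ (resp.\ restriction $G\times\operatorname{ann}_H(y_j)\to K$) already has strictly smaller reduced dimension; the endgame (inductive hypothesis, Lemma \ref{almosttrivial}, definability of the almost annihilator over $y_j$ via Lemma \ref{definability}, and principality planting $y_i$ in the resulting $\Gamma_i$-definable finite-index subgroup via $y_i\in G^0_{\Gamma_i}$) is the same as the paper's. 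This buys you a shorter proof with no extremal choices and in fact a stronger conclusion: you get $\mathrm{im}(\lambda_{y_i})\apprle\mathrm{im}(\lambda_{y_j})$ and $\operatorname{ann}_H(y_j)\apprle\operatorname{ann}_H(y_i)$ for \emph{all} $i\ne j$ (hence mutual almost containment in the nondegenerate cases), while the paper only records comparability; this is harmless for the intended application. Two small points you should spell out: Lemma \ref{induced} as stated bounds only the $H$-side and the target, so note that $\operatorname{rdim}_{\overline{\lambda}}(G)\le\operatorname{rdim}_{\lambda}(G)$ holds as well (the annihilators in $G$ only grow when the target is quotiented, resp.\ the relevant tuples range over a subgroup of $H$ when you restrict), and justify the additivity of dimension for chains of definable subgroups by the two fibration axioms applied to the coset projection, since the paper never states it explicitly.
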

\begin{proof}
    Let $C=\sum_{i=0}^n \mathrm{im}(\lambda_{y_{l_i}})$ be a sum of maximal dimension among the finite sums of images of $\lambda_{y}$ for $y\in \langle y_i\rangle_{i\in \mathbb{Q}}$. Moreover, we may assume that, for any $j\leq n$, $\dim(\sum_{i=0,i\not=j}^n \mathrm{im}(\lambda_{y_{l_i}}))<\dim(\sum_{i=0}^n \mathrm{im}(\lambda_{y_{l_i}}))$. By maximality of the dimension, for every $y\in \langle y_i\rangle_{i\in \mathbb{Q}}$, $\mathrm{im}(\lambda_y)$ is almost contained in the previous sum.\\
    We consider the induced bilinear quasi-form $\overline{\lambda}:G\times H\to K/C$ and denote by $A$ the almost annihilator of $\overline{\lambda}$ in $G$ and by $B$ the subgroup $\{h\in H:\ \lambda_h(A) \text{ is finite}\}$. $A,B$ are definable by Lemma \ref{definability}, and moreover, $H$ is almost contained in $B$ by Lemma \ref{sym}. By definition, $\lambda_a[B]\leq \lambda_a[H]\apprle C$ for any $a\in A$ and similarly for $h\in B$. The restriction 
    $$\lambda:A\times B\to C$$
    is an induced $\{y_{l_i}\}_{i\in \mathbb{Q}}$ form as is 
    $$\lambda_i:A\times B\to C/\mathrm{im}(\lambda_i)\cap C.$$
    Each of the $\lambda_i$ has dimension strictly less than $\operatorname{dim}(\lambda)$ (by minimality of $n$) and so it must be virtually almost trivial. Therefore, the almost annihilator is a $\{y_i\}_{i\leq n}$-definable subgroup of finite index in $A$ \hbox{i.e.} it contains $A^0_{\{y_{l_i}:i\leq n\}}$. Since $y$ is principal, it must be contained in the connected component and so $\mathrm{im}(\lambda_y)\apprle \mathrm{im}(\lambda_{y_i})$. By principal indiscernibility, the conclusion follows.\\
    For the annihilators, take an intersection $B:=\bigcap_{i\leq n} \operatorname{ann}(\lambda_{y_{l_i}})$ of minimal dimension possible and with $n$ minimal among the intersections of minimal dimension.\\
    Let $A:=\{g\in G:\ B\apprle \operatorname{ann}_H(g)\}$ and consider the following quasi bilinear form
    $$\lambda_i:A\times \operatorname{ann}_H(y_{l_i})\to K.$$
    We show that the reduced dimension of $\lambda_i$ is strictly less than $\operatorname{rdim}(\lambda)$. It is sufficient to prove that $\operatorname{rdim}_{\lambda_i}(\operatorname{ann}_H(y_{l_i}))<\operatorname{rdim}_{\lambda}(H)$. Taken $\overline{g}\in G$, then $\operatorname{ann}_H(\overline{g})\apprge B$ and so the dimension of $\operatorname{ann}_H(y_{l_i})/\operatorname{ann}_{\operatorname{ann}_H(y_{l_i})}(\overline{g})$ is less than $\dim(\operatorname{ann}_h(y_{l_i})/B)$. By minimality, $\operatorname{dim}(\bigcap_{j=1,j\not=i}\operatorname{ann}_H(y_j)/B)>0$ and so
    $$\operatorname{dim}(\operatorname{ann}_H(y_{l_i})/B)<\operatorname{dim}(\operatorname{ann}_H(y_{l_i})/B)+\operatorname{dim}\big(\bigcap_{j=1,j\not=i}\operatorname{ann}_H(y_j)/B\big)\leq \operatorname{dim}(H/B)$$
    that is less than the reduced dimension of $H$. Therefore, $\lambda_i$ is virtually almost trivial. Consequently, the almost annihilator is a subgroup of finite index in $A$ and so it contains the connected component. This implies that $\operatorname{ann}_H(y_{l_i})\apprle \operatorname{ann}_H(y_{l_j})$.
\end{proof}
\begin{lemma}\label{NilpoIsom}
    Suppose that $\lambda$ is definable in a finite-dimensional theory and that any induced bilinear quasi-form of strictly smaller dimension is virtually almost trivial. Let $\langle y_i\rangle_{i\in \mathbb{Q}}$ be a principal dimensional-generic sequence. Then, for any $j\in \mathbb{Q}$, any definable quasi-endomorphism $f$ of $H/\operatorname{ann}_H(y_j)$ is either invertible or nilpotent.
\end{lemma}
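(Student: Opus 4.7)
The plan is to argue by contradiction, combining Lemma \ref{Lemm 4.6} with the induced-quasi-form machinery and principal indiscernibility. Assume $f:H_j\to H_j$, where $H_j:=H/\operatorname{ann}_H(y_j)$, is a definable quasi-endomorphism that is neither invertible nor nilpotent. Applying Lemma \ref{Lemm 4.6}, pick $n<\omega$ with $H_j\sim \mathrm{im}(f^{\circ n})\widetilde\oplus\ker(f^{\circ n})$, and set $A=\mathrm{im}(f^{\circ n})$, $B=\ker(f^{\circ n})$. The hypothesis on $f$ forces both $A$ and $B$ to be infinite and of infinite index in $H_j$; let $\tilde A,\tilde B\leq H$ denote their preimages under $H\to H_j$, each containing $\operatorname{ann}_H(y_j)$.

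The key construction is the induced bilinear quasi-form $\bar\lambda:G\times H\to K/\lambda_{y_j}(\tilde B)$ obtained by post-composing $\lambda$ with the projection $K\to K/\lambda_{y_j}(\tilde B)$. Since $\ker(\lambda_{y_j})=\operatorname{ann}_H(y_j)\leq\tilde B$, the map $\lambda_{y_j}$ descends to an injection $H_j\hookrightarrow K$; in particular $\lambda_{y_j}(\tilde B)\cong B$ is infinite and $\dim(K/\lambda_{y_j}(\tilde B))<\dim(K)$ by the fibration axiom. Together with the non-strict bounds from Lemma \ref{induced} (applied on each side), this yields $\operatorname{rdim}(\bar\lambda)<\operatorname{rdim}(\lambda)$, so the inductive hypothesis applies and $\bar\lambda$ is virtually almost trivial. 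By Lemma \ref{almosttrivial} the subgroup $H':=\widetilde{\operatorname{ann}}^{\bar\lambda}_G(H)=\{g\in G:\lambda_g(H)\apprle \lambda_{y_j}(\tilde B)\}$ has finite index in $G$.

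Now exploit the principal sequence. The subgroup $H'$ is definable over $y_j$ (together with the parameters defining $f$, which we may absorb into the base of the sequence). For any $i\neq j$ we have $y_j\in A_i$, so $H'$ is $A_i$-definable of finite index in $G$; principal indiscernibility then forces $y_i\in (H')^0_{A_i}\leq H'$, i.e.\ $\lambda_{y_i}(H)\apprle \lambda_{y_j}(\tilde B)$. On the other hand, Lemma \ref{Comparability} (available under the same inductive hypothesis) combined with the symmetry of the indiscernible sequence gives $\mathrm{im}(\lambda_{y_j})\sim\mathrm{im}(\lambda_{y_i})$. Chaining these, $\mathrm{im}(\lambda_{y_j})\sim\mathrm{im}(\lambda_{y_i})\apprle \lambda_{y_j}(\tilde B)$, so $\mathrm{im}(\lambda_{y_j})/\lambda_{y_j}(\tilde B)$ is finite. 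But via the isomorphism $\lambda_{y_j}:H_j\to\mathrm{im}(\lambda_{y_j})$ this quotient corresponds to $H_j/B$, which is commensurable with $A$ and hence infinite — the desired contradiction.

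The main obstacle is the choice of induced quasi-form. Quotienting $K$ by $\lambda_{y_j}(\tilde B)$ is what turns the assumed non-triviality of $B=\ker(f^{\circ n})$ into a strict drop of $\operatorname{rdim}$ in the $K$-summand while keeping $\bar\lambda$ genuinely bilinear; naive restrictions $\lambda|_{G\times\tilde A}$ or $\lambda|_{G\times\tilde B}$ do not obviously yield a strict decrease in reduced dimension. Once this construction is in place, the interplay of principal indiscernibility (forcing $y_i\in H'$ for all $i\neq j$) with Lemma \ref{Comparability} (forcing the images $\mathrm{im}(\lambda_{y_i})$ to be commensurable) closes the argument essentially automatically.
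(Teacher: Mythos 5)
Your argument is sound in substance but takes a genuinely different route from the paper's. The paper also starts from Lemma \ref{Lemm 4.6}, but then restricts the form on the $G$-side to $A=\{g\in G:\operatorname{ann}_H(g)\apprge\operatorname{ann}_H(y_j)\}$ and considers $\lambda$ on $A\times B_1$ and $A\times B_2$, where $B_1,B_2$ are the preimages in $H$ of $\mathrm{im}(f^{\circ n})$ and $\ker(f^{\circ n})$: there the strict drop of reduced dimension is obtained on the $H$-side (for $g\in A$ the annihilators almost contain $\operatorname{ann}_H(y_j)$, while each $B_i$ has infinite index modulo $\operatorname{ann}_H(y_j)$), comparability of annihilators from Lemma \ref{Comparability} places the $y_i$ on one side of $j$ inside $A$, and the proof ends with ``$\lambda$ is virtually almost trivial'', which only contradicts the minimality assumption standing outside the lemma. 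Your construction instead leaves $G\times H$ intact and quotients the target by $C=\lambda_{y_j}(\tilde B)$, so the strict drop occurs in the $\dim(K)$-summand (legitimate: $C$ is infinite, hence $\dim(K/C)<\dim(K)$ by fibration, and Lemma \ref{induced} controls the other two summands); principality, used correctly via $y_i\in G^0_{A_i}\leq\{g\in G:\lambda_g(H)\apprle C\}$, then gives $\lambda_{y_i}(H)\apprle C$ for all $i\neq j$. This buys a self-contained contradiction from the lemma's stated hypotheses alone, which is arguably cleaner than the paper's ending.

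Two points need repair. First, the step ``Lemma \ref{Comparability} combined with the symmetry of the indiscernible sequence gives $\mathrm{im}(\lambda_{y_j})\sim\mathrm{im}(\lambda_{y_i})$'' is wrong: an indiscernible sequence is only order-indiscernible, so comparability plus indiscernibility yields monotonicity of the images along the order, not commensurability (indeed, in the situation where the lemma is applied, the second Claim in the proof of Theorem \ref{VirtTriv} shows the annihilators are pairwise non-commensurable). The fix is immediate inside your own argument: monotonicity gives $\mathrm{im}(\lambda_{y_j})\apprle\mathrm{im}(\lambda_{y_i})$ for all $i$ on one side of $j$, and since you already have $\lambda_{y_i}(H)\apprle C$ for every $i\neq j$, choosing $i$ on that side yields $\mathrm{im}(\lambda_{y_j})\apprle C$, hence the same contradiction with $B=\ker(f^{\circ n})$ having infinite index in $H/\operatorname{ann}_H(y_j)$. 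Second, ``absorbing the parameters of $f$ into the base'' is not free: enlarging the base of an already constructed principal indiscernible dimensional-generic sequence can destroy indiscernibility, principality and genericity. What the principality step actually requires is that your finite-index subgroup be $A_i$-definable, i.e.\ that $y_j$ and the parameters of $f$ lie in $A_i$; this is automatic in the intended applications, where $f$ is defined over finitely many sequence elements different from $y_i$, and the paper's own proof rests on the same implicit assumption, so this is a caveat to state rather than a flaw specific to your approach.
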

\begin{proof}
    By Lemma \ref{Lemm 4.6}, there exists $n<\omega$ such that $G\apprle\mathrm{im}(f^n)\widetilde{\oplus}\operatorname{ker}(f^n)$. The group 
    $$A:=\{g\in G:\ \operatorname{ann}_H(y_j)\apprle \operatorname{ann}_H(g)\}$$
    is a definable subgroup by Lemma \ref{boundedind}. Denote $B_1,B_2$ the preimages in $H$ of $\mathrm{im}(f^n)$ and $\operatorname{ker}(f^n)$ respectively. Then $B_1\cap B_2$ is a finite extension of $\operatorname{ann}_H(y_j)$. If $f$ is neither invertible nor nilpotent, then both the $B_i$ are infinite and the restriction of $\lambda$ to $A\times B_i$ has strictly smaller dimension. Therefore, it is virtually almost trivial and the same for the bilinear quasi-form 
    $$\lambda':A\times H\to K$$
    given by the sum of the two. By Lemma \ref{Comparability}, $y_i\in A$ for either $i<j$ or $j>i$. Therefore, $y_i\in \operatorname{ann}_G(H)$ and the latter must be of finite index since $y_i$ is dimensional-generic. Therefore, by Lemma \ref{almosttrivial}, $\lambda$ is virtually almost trivial.
\end{proof}
We start the proof of Theorem \ref{VirtTriv}.
\begin{proof}
    Let $\lambda$ be a counterexample of Theorem \ref{VirtTriv} of minimal dimension. Up to add finitely many parameters, we can assume that everything is $\emptyset$-definable. Let $\langle (x_i,x'_i):i\in \mathbb{Q}\cup\mathbb{Q}+\epsilon\rangle$ be a dimensional-generic sequence, that exists by Lemma \ref{ExistGen}. Define $y_i=x_i-x_{i+\epsilon}$ and $y'_i=x'_i-x'_{i+\epsilon}$. Then, by Lemma \ref{Prinseq}, $\langle (y_i,y'_i):i\in \mathbb{Q}\rangle$ is a principal indiscernible dimensional-generic sequence.
    \begin{claim}
        For $i<j$, $\operatorname{ann}_H(y_j)\apprle \operatorname{ann}_H(y_i)$ and $\mathrm{im}(\lambda_i)\apprle \mathrm{im}(\lambda_j)$.
    \end{claim}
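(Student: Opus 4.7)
The plan is to deduce the claim directly from Lemma \ref{Comparability} together with the indiscernibility of the sequence $\langle y_i\rangle_{i\in\mathbb{Q}}$ and the finite-dimensional chain condition of Lemma \ref{boundedind}.

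First I would apply Lemma \ref{Comparability} to the principal indiscernible dimensional-generic sequence produced by Proposition \ref{Prinseq}, which yields comparability of $\operatorname{ann}_H(y_i)$ with $\operatorname{ann}_H(y_j)$ and of $\mathrm{im}(\lambda_i)$ with $\mathrm{im}(\lambda_j)$ for every $i\neq j$. Indiscernibility then implies that the direction of each comparison depends only on the order of $i$ and $j$, so for the annihilators there are exactly two cases: either (i) $\operatorname{ann}_H(y_j)\apprle\operatorname{ann}_H(y_i)$ whenever $i<j$ (the desired conclusion, allowing also commensurability), or (ii) $\operatorname{ann}_H(y_i)\apprle\operatorname{ann}_H(y_j)$ strictly (that is, with $|\operatorname{ann}_H(y_j):\operatorname{ann}_H(y_j)\cap\operatorname{ann}_H(y_i)|$ infinite) whenever $i<j$. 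The same dichotomy holds for the images.

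The core step is to rule out the wrong direction by contradicting Lemma \ref{boundedind}. For the annihilators, suppose (ii) holds. Fix $n$ arbitrarily and choose $j_1>j_2>\cdots>j_n$ in $\mathbb{Q}$. A short induction, using $\operatorname{ann}_H(y_{j_{k+1}})\apprle\operatorname{ann}_H(y_{j_k})$ (assumption (ii) read in reverse order), shows that $B_k:=\bigcap_{\ell=1}^{k}\operatorname{ann}_H(y_{j_\ell})$ is commensurable with $\operatorname{ann}_H(y_{j_k})$, so $|B_k:B_{k+1}|$ is infinite for every $k$. Since $n$ was arbitrary and the family $\{\operatorname{ann}_H(y_i)\}_{i\in\mathbb{Q}}$ is uniformly definable, this contradicts Lemma \ref{boundedind}. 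The argument for the images is analogous and slightly simpler because the forbidden direction is already descending: the intersections $\bigcap_{\ell=1}^{k}\mathrm{im}(\lambda_{j_\ell})$ directly form a strictly almost descending chain with infinite jumps, again contradicting Lemma \ref{boundedind}.

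The main obstacle is the bookkeeping on the annihilator side: one must convert a strictly almost ascending chain of uniformly definable subgroups into a strictly almost descending chain of intersections in the exact form demanded by Lemma \ref{boundedind}, which requires reversing the order of indices and verifying the commensurability $B_k\sim\operatorname{ann}_H(y_{j_k})$ inductively. Once this reduction is in place, the contradiction with Lemma \ref{boundedind} is immediate and the claim follows.
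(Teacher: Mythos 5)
Your argument is correct, but it reaches the claim by a different mechanism than the paper. The paper also starts from the comparability given by Lemma \ref{Comparability}, but it never excludes the ``wrong'' direction: assuming $\operatorname{ann}_H(y_0)\apprle\operatorname{ann}_H(y_1)$, it shows via genericity arguments (a pigeonhole on $x_0,\dots,x_n$ against a generic $h\in\widetilde{\operatorname{ann}}_H(G)$, plus dimensional-genericity of $y_1$ over $y_0$) that in this case every $\operatorname{ann}_H(y_i)$ is commensurable with $\widetilde{\operatorname{ann}}_H(G)$, so the desired almost containment holds anyway; strictness in the good direction is left open and only settled in the next claim. You instead rule out the strict wrong direction outright: indiscernibility makes the direction of the comparison depend only on the order of the indices (legitimate, since ``index $\geq k$'' is first-order, so finiteness of the index is type-determined), and propagating strictness along an arbitrarily long decreasing subsequence produces intersections $B_k\sim\operatorname{ann}_H(y_{j_k})$ with infinite successive drops, contradicting Lemma \ref{boundedind} for the uniformly definable families $\{\operatorname{ann}_H(g)\}_{g\in G}$ and $\{\mathrm{im}(\lambda_g)\}_{g\in G}$; your bookkeeping here is sound, and you treat images and annihilators symmetrically rather than deducing the image statement from the annihilator one. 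What your route buys is elementarity (only comparability, indiscernibility and the chain condition; no genericity of $h$, no pigeonhole); what it costs is that it proves strictly more than the claim: the same chain argument run along increasing subsequences also forbids strictness in the good direction, so it yields pairwise commensurability of all the $\operatorname{ann}_H(y_i)$ (and of all the $\mathrm{im}(\lambda_{y_i})$). That is harmless for the claim itself, but it clashes head-on with the subsequent claim of non-commensurability: if your stronger conclusion is retained, the contradiction finishing Theorem \ref{VirtTriv} is already available at that point and the rest of the paper's argument becomes redundant, so when splicing your proof in you should either flag this shortcut explicitly or state only the weaker conclusion actually needed.
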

    \begin{proof}
        The second is clearly a consequence of the first and of Lemma \ref{Comparability}.\\
        $\widetilde{\operatorname{ann}}_H(G)$ is definable and the indexes $|G:\operatorname{ann}_G(h)|$ takes only finitely many values. Let $n$ be the maximal one and let $h$ realize a generic type for $\widetilde{\operatorname{ann}}_H(G)$. Then, taken $x_0,x_1,...,x_n$, there exist $i\not=j$ such that $x_i-x_j\in \operatorname{ann}_G(h)$ and so $h\in \operatorname{ann}_H(x_i-x_j)$. By dimensional-genericity of $h$, the subgroup $\operatorname{ann}_H(x_i-x_j)\cap \widetilde{\operatorname{ann}}_H(G)$ is of finite index in $\widetilde{\operatorname{ann}}_H(G)$ \hbox{i.e.} $x_i-x_j\in \widetilde{\operatorname{ann}}_G(\widetilde{\operatorname{ann}}_H(G))$. By indiscernibility, $\operatorname{ann}_H(y_0)\apprge \widetilde{\operatorname{ann}}_H(G)$. Suppose $\operatorname{ann}_H(y_0)\apprle \operatorname{ann}_H(y_1)$. Then $y_1\in \widetilde{\operatorname{ann}}_G(\operatorname{ann}_G(y_0))$. By dimensional-genericity, $\widetilde{\operatorname{ann}}_G(\operatorname{ann}_G(y_0))$ should be of finite index. Consequently, $\widetilde{\operatorname{ann}}_H(G)\apprge \widetilde{\operatorname{ann}}_H(y_0)$ and the same must hold for every $y_i$ for $i\in \mathbb{Q}$ by indiscernibility. This implies that $\operatorname{ann}_H(y_0)\sim \widetilde{\operatorname{ann}}_H(G)\sim \operatorname{ann}_H(y_1)$. If $\operatorname{ann}_H(y_1)\apprle \operatorname{ann}_H(y_0)$, the conclusion follows.\\
    \end{proof}
    
   We define $\lambda_{y_1,y_j}=\lambda_{y_j}^{-1}\circ \lambda_{y_i}$ to be the definable additive relation in $H\times H$ given by 
   $$\{(h,h'):\ \lambda(y_i,h)=\lambda(y_j,h')\}.$$
   Clearly, the kernel of this relation is $\operatorname{ann}_H(y_i)$ and $\operatorname{kat}(\lambda_{y_i,y_j})=\operatorname{ann}_H(y_j)$.
   By Claim $1$, this defines a quasi-homomorphism $\overline{\lambda}_{y_i,y_j}$ of $H/\operatorname{ann}_H(y_j)$. For the definition of additive relation and quasi-homomorphism see section 3 of \cite{dobrowolski2020omega}.
    \begin{claim}
        For $i\not=j$, $\operatorname{ann}_H(y_i)\not\sim \operatorname{ann}_H(y_j)$.
    \end{claim}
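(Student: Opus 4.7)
The plan is to argue by contradiction: suppose $\operatorname{ann}_H(y_i) \sim \operatorname{ann}_H(y_j)$ for some $i<j$. By Claim 1 combined with the indiscernibility of $\langle y_i\rangle_{i\in\mathbb{Q}}$, all pairs $\operatorname{ann}_H(y_k), \operatorname{ann}_H(y_l)$ would then be commensurable. The central object to exploit is the quasi-endomorphism $\phi:=\overline{\lambda}_{y_0,y_1}$ of $H/\operatorname{ann}_H(y_1)$; under the commensurability assumption its kernel $\operatorname{ann}_H(y_0)/\operatorname{ann}_H(y_1)$ is finite, so $\phi$ is almost injective. The next step is to apply Lemma \ref{NilpoIsom} to split into the two cases of $\phi$ being invertible or nilpotent.

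The nilpotent case gives a quick contradiction: iterating a finite-kernel quasi-endomorphism yields $|\ker(\phi^n)|\leq|\ker\phi|^n<\infty$, so a sufficient power of $\phi$ being trivial forces $H/\operatorname{ann}_H(y_1)$ itself to be finite. Then $\operatorname{ann}_H(y_1)$ has finite index in $H$ and hence $y_1\in\widetilde{\operatorname{ann}}_G(H)$. By Lemma \ref{dimgen}, the dimensional-generic $y_1$ is subgroup-generic, so the definable subgroup $\widetilde{\operatorname{ann}}_G(H)$ containing it must have finite index in $G$; Lemma \ref{almosttrivial} then makes $\lambda$ virtually almost trivial, contradicting the minimality of $\lambda$ as a counterexample.

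The invertible case is where the real work lies. The strategy is to promote invertibility of $\phi$ into a field structure via Theorem \ref{LinInv}. Since $y_0-y_1$ is again dimensional-generic (as a difference of independent dimensional-generics in the abelian group $G$), its annihilator is commensurable with $\operatorname{ann}_H(y_1)$ by the same reasoning used in Claim 1, and the associated quasi-endomorphism $\overline{\lambda}_{y_0-y_1,y_1}$ equals $\phi-\operatorname{id}$, which is again invertible or nilpotent; the same dichotomy applies to every $\mathbb{Z}$-linear combination of the $y_i$'s, producing a subring of definable quasi-endomorphisms of $H/\operatorname{ann}_H(y_1)$ in which every element is either invertible or nilpotent. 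Theorem \ref{LinInv} then yields a definable field $K_0$ acting on $H/\operatorname{ann}_H(y_1)$, through whose $K_0$-module structure $\lambda$ factors. Quotienting $H$ by $\widetilde{\operatorname{ann}}_H(G)$, which under the commensurability assumption is a properly large definable subgroup (it almost contains the common annihilator, but has infinite index in $H$ since $\lambda$ is not virtually almost trivial), gives an induced bilinear quasi-form of strictly smaller reduced dimension that is still not virtually almost trivial, contradicting the inductive hypothesis.

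The main obstacle is in the invertible case: carefully choosing the quotients $A\leq G,B\leq H,C,D\leq K$ so that the induced bilinear quasi-form indeed has strictly smaller $\operatorname{rdim}$ than $N$ while remaining non-virtually-almost-trivial. Getting the precise dimension arithmetic right, and ensuring the invariance conditions needed to legitimately invoke the inductive hypothesis, is expected to be the most technical step of the argument.
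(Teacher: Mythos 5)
Your opening move matches the paper (assume $\operatorname{ann}_H(y_i)\sim\operatorname{ann}_H(y_j)$, spread this to all indices by indiscernibility, pass to quasi-endomorphisms of $\overline H=H/\operatorname{ann}_H(y_j)$ and invoke the invertible-or-nilpotent dichotomy of Lemma \ref{NilpoIsom}), and your treatment of the nilpotent case (finite kernel plus nilpotency forces $\overline H$ finite, hence $y_j\in\widetilde{\operatorname{ann}}_G(H)$, hence by genericity $\lambda$ is virtually almost trivial) is essentially the paper's first paragraph of this claim. But the invertible case, which is where the claim actually lives, has a genuine gap. Theorem \ref{LinInv} is not the right tool and cannot be applied as you propose: its hypotheses concern a \emph{definable} almost abelian group acting on a definable almost abelian group that is absolutely minimal for that action, with the action not almost trivial; a subring of quasi-endomorphisms of $\overline H$ generated by the $\overline{\lambda}_{y_i,y_j}$ is none of these things, and you verify none of the hypotheses. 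The mechanism the paper uses instead is $\omega$-categoricity: the nilpotents form an ideal $I$ in the ring $R$ of quasi-endomorphisms modulo equivalence, $R/I$ is a division ring, and by Ryll-Nardzewski plus Wedderburn's little theorem it is a locally finite field; hence each $\overline{\lambda}_{y_i,y_j}$ (a quasi-automorphism, since kernels \emph{and} images are commensurable) has finite order, and indiscernibility together with the composition identity $\lambda_{y_j,y_k}\lambda_{y_i,y_j}=\lambda_{y_i,y_k}$ forces all of them to equal $1$ modulo $I$.

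Your concluding step is also unjustified. Quotienting $H$ by $\widetilde{\operatorname{ann}}_H(G)$ does not even produce an induced quasi-form in the sense of the paper ($\lambda$ does not factor through that quotient, since elements of $\widetilde{\operatorname{ann}}_H(G)$ are not annihilated by all of $G$), and even granting a suitable restriction, the reduced dimension does not drop: as shown in the proof of Claim 1, $\widetilde{\operatorname{ann}}_H(G)$ is almost contained in every $\operatorname{ann}_H(\bar g)$, and $\operatorname{rdim}_\lambda(H)$ is computed from the quotients $H/\operatorname{ann}_H(\bar g)$, so these quotients are unchanged; thus the inductive hypothesis cannot be invoked, and no contradiction is reached. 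The paper closes the argument differently: once all $\lambda_{y_i,y_j}\equiv 1 \pmod I$, the quasi-endomorphism $\lambda_{x_1-x_3,x_2-x_3}-\operatorname{id}_H$ is nilpotent, so its image $B$ is small (of infinite index, almost containing $\operatorname{ann}_H(y_0)$), and a direct computation shows $\mathrm{im}(\lambda_{x_1-x_2})$ is almost contained in $\lambda_{x_2-x_3}[B]$, whose dimension is strictly smaller than $\dim(\mathrm{im}(\lambda_{x_2-x_3}))$ -- contradicting the commensurability (equal dimension) of the images established from the commensurability of the annihilators. You would need to replace your appeal to Theorem \ref{LinInv} and the quotient-by-$\widetilde{\operatorname{ann}}_H(G)$ step with an argument of this kind.
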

    \begin{proof}
        Suppose otherwise and put $\overline{H}=H/\operatorname{ann}_H(y_0)$. If $\operatorname{ann}_H(y_0)$ is of finite index, $y_0\in \widetilde{\operatorname{ann}}_G(H)$ that, being $y_0$ dimensional-generic, should imply that $\widetilde{\operatorname{ann}}_G(H)$ is of finite index in $G$. Consequently, $\lambda$ is virtually almost trivial.\\
        Assume, for a contradiction, that $\operatorname{ann}_H(y_0)\sim \operatorname{ann}_H(y_i)$. Then $\operatorname{ann}_H(y_0)\sim \operatorname{ann}_H(y_i)$ for every $i$ by indiscernibility. This implies that $\mathrm{im}(\lambda_{y_i})\sim \mathrm{im}(\lambda_{y_j})$ since $\mathrm{im}(\lambda_0)\apprle \mathrm{im}(\lambda_i)$ and 
        $$\operatorname{dim}(\mathrm{im}(\lambda_0))=\operatorname{dim}(H)-\operatorname{dim}(\operatorname{ann}_H(y_0))=\operatorname{dim}(H)-\operatorname{dim}(\operatorname{ann}_H(y_i))=\operatorname{dim}(\mathrm{im}(\lambda_i)).$$
        Let $R$ be the ring of quasi-endomorphisms of $\overline{H}$ modulo equivalence. Then, every element of $R$ is nilpotent or invertible by Lemma \ref{NilpoIsom}. The subset $I=\{r\in R:\ r \text{ is nilpotent}\}$ is an ideal in $R$. Indeed, it is clearly invariant for left and right multiplication. On the other hand, if $f,g\in I$, then $f+g$ is nilpotent. If not, there is $h\in R$ such that $h(f+g)=1$. But $(id-hg)(id+hg+(hg)^2+\cdot\cdot\cdot)=id$ (this sum is finite since $hg$ is nilpotent) implies that $id-hg=hf$ is invertible, a contradiction. Therefore, $R/I$ is a division ring. Let $\overline{r}$ be a finite tuple in $R/I$ definable over a finite set $A$. Then, by $\omega$-categoricity, $\overline{r}$ generates a finite subring of $R$. Therefore, by little Weddleburn's theorem, this is a locally finite field. Any quasi-endomorphism $\lambda_{y_i,y_j}$ is a quasi-automorphism on $\overline{H}$. Indeed, $\operatorname{ker}(\lambda_y)=\operatorname{ker}(\lambda_{y'})$ and therefore $\lambda_y$ is a quasi-isomorphism from $\overline{H}$ to $\mathrm{im}(\lambda_0)$. This implies that the composition of $\lambda_y$ and $\lambda_{y'}^{-1}$ is a quasi-automorphism. In particular, it is not nilpotent. By locally finiteness and indiscernibility, $\lambda_{y,y'}$ in $R/I$ has finite order equal to $N$. Since there are only finitely many $N$-roots of the unity, there are finitely many couples $\{(y_i,y_j)\}_{(i,j)\in \mathbb{Q}^2}$ such that, for any $y,y'\in \{y_i\}_{i\in \mathbb{Q}}$, there exists a couple $(y_i,y_j)$ such that $\lambda_{y_i,y_j}\sim\lambda_{y,y'}$. Therefore, by indescernibility, we may assume that all $\lambda_{y_i,y_j}$ are equal to the same $\eta+I$. Then, $\eta^2+I=\lambda_{y_j,y_k}\lambda_{y_i,y_j}+I=\eta+I$ and so $\eta=1$ (since $\eta\not=0$).\\
         Since $\lambda_{x_1-x_3,x_2-x_3}-id_H\in I$ then $B:=\mathrm{im}(\lambda_{x_1-x_3,x_2-x_3}-id_H)$ is of infinite index in $H$ almost containing $\operatorname{ann}_H(y_0)$.\\
         Consequently, for any $h\in H^{\circ}_{x_1,x_2,x_3}\leq \operatorname{dom}(\lambda_{x_1-x_3,x_2-x_3}-\operatorname{id}_H)$, there exists $b\in B$ such that $h+b\in \lambda_{x_1-x_3,x_2-x_3}(h)$. Hence,
         $$\lambda(x_1-x_3,h)=\lambda(x_2-x_3,h+b)=\lambda(x_2-x_3,h)+\lambda(x_2-x_3,b)$$
         whence 
         $$\lambda(x_1-x_2,h)=\lambda(x_1-x_3,h)-\lambda(x_2-x_3,h)=\lambda(x_2-x_3,b).$$
         This means that $\mathrm{im}(\lambda_{x_2-x_3})\simeq \mathrm{im}(\lambda_{x_1-x_2})$ is almost contained in $\lambda_{x_2-x_3}[B]$. But $\operatorname{ker}(\lambda_{x_2-x_3})\apprle B$ so 
         $$\operatorname{dim}(\lambda_{x_2-x_3}[B])=\operatorname{dim}(B)+\operatorname{dim}(\operatorname{ker}(\lambda_{x_2-x_3}))<\operatorname{dim}(\lambda_{x_2-x_3}).$$
         This is clearly a contradiction.
    \end{proof}
   We now complete the proof.\\
   Observe that $\mathrm{im}(\lambda_{y_i})$ is of infinite index in $K$ for $i\in \mathbb{Q}$. If not, there exists $n<\omega$ such that the following formula is satisfied by $(x_i,x_{i+\epsilon})$:
        $$\exists k_1,...,k_n\in K\ \forall k\in K\ \exists h\in H\ \bigvee_{i=1}^n k_i\lambda(x_i-x_{i+\epsilon},h)=k.$$
   By indiscernibility, every $y_j$ realizes this formula. Therefore, the images of $\lambda_{y_j}$ are all of finite index in $K$, implying that $\operatorname{ann}_{H}(y_j)\sim \operatorname{ann}_H(y_i)$ for all $i,j\in \mathbb{Q}$, a contradiction by the previous claim.\\
       Take $0<k\in \mathbb{Q}$ and put 
        $$A:=\{g\in G:\ \mathrm{im}(\lambda_g)\apprle \mathrm{im}(\lambda_{y_k})\}$$
        and
        $$H':=\{h\in H:\ \lambda'_h[A]\apprle \mathrm{im}(\lambda_{y_k})\}.$$ 
        We verify that $\operatorname{dim}(H')=\operatorname{dim}(H)$ and so $H'\sim H$. Let 
        $$X:=\{(a,h)\in A\times H : \lambda(a,h)\in \mathrm{im}(\lambda_{y_k})\}.$$
        $X$ is clearly definable. We prove that $\operatorname{dim}(X)=\operatorname{dim}(A\times H)$. Taken the first projection $\pi_1$, this is a surjective map from $X$ to $A$ with fibers $\pi^{-1}_1(a)=\{h\in H:\lambda(a,h)\in \mathrm{im}(\lambda_{y_k})\}$ that is of dimension equal to $\operatorname{dim}(H)$ by the assumption on $A$. By fibration, $\operatorname{dim}(X)=\operatorname{dim}(H)+\operatorname{dim}(A)$. Taken the second projection $\pi_2$, the image is $H$ and the fibers are $\pi_2^{-1}(h)=\{a\in A:\lambda(a,h)\in \mathrm{im}(\lambda_{y_k})\}$. $H'$ is clearly the set (definable by Lemma \ref{boundedind}) of $h\in H$ with fibers of maximal dimension. By union, $\operatorname{dim}(X)\leq \max\{\operatorname{dim}(H')+\operatorname{dim}(A),\operatorname{dim}(H-H')+\operatorname{dim}(A)-1\}$. $\operatorname{dim}(X)=\operatorname{dim}(A)+\operatorname{dim}(H)$ and necessarily $\operatorname{dim}(H')=\operatorname{dim}(H)$.\\
        We now study the bilinear quasi-form 
        $$\widetilde{\lambda}:A\times H'\to \mathrm{im}(\lambda_{y_k}).$$
        This has reduced dimension strictly less than $\operatorname{rdim}(\lambda)$. Indeed, $\operatorname{rdim}_{\widetilde{\lambda}}(A)\leq \operatorname{rdim}_{\lambda}(G)$ by Lemma \ref{induced}. The same holds for $\operatorname{rdim}_{\widetilde{\lambda}}(H')$. Finally, $\operatorname{dim}(\lambda_{y_i})<\operatorname{dim}(K)$ by previous proof.
        By assumption on the minimality of $\operatorname{dim}(\lambda)$, $\widetilde{\lambda}$ is virtually almost trivial and so $\widetilde{\operatorname{ann}}_{A}(H')$ is a $(x_k,x_{k+\epsilon})$-definable subgroup of finite index in $A$ such that, for all $g\in \widetilde{\operatorname{ann}}_{A}(H')$, $\lambda(g,\widetilde{\operatorname{ann}}_{H'}(A))$ is finite. Since $\widetilde{\operatorname{ann}}_{H'}(A)$ is of finite index in $H'$ and so also in $H$, $\widetilde{\operatorname{ann}}_{A}(H')\leq \widetilde{\operatorname{ann}}_G(H)$. Taken $y_l$ for $l<k$, $\mathrm{im}(y_l)\apprle \mathrm{im}(y_k)$ by Claim $1$ and so $y_l\in A$. By principality, $y_l\in A^{\circ}_{y_k}$. This implies that $y_l\in \widetilde{\operatorname{ann}}_G(H)$. By indiscernibility, this holds for every $y_i$. Since $y_0$ is dimensional-generic, this implies that $\widetilde{\operatorname{ann}}_G(H)$ is of finite index in $G$. Finally, the proof follows from Lemma \ref{almosttrivial}.
    \end{proof}
\section{Groups and rings}
We show that finite-dimensional $\omega$-categorical groups are finite-by-abelian-by-finite and that finite-dimensional $\omega$-categorical rings are virtually finite-by-null.\\
We start by verifying that the case (2) in Theorem \ref{CharSimGro} is impossible for finite-dimensional groups. In other words, we need to prove that a definable characteristically simple group $G$ of finite dimension cannot be isomorphic to $B(F)$ or $B^-(F)$. This is a consequence of the following Lemma.
\begin{lemma}\label{B(F)}
    The groups $B(F)$ and $B^-(F)$ are not of finite dimension \hbox{i.e.} there is no finite-dimensional theory $T$ in which $B(F)$ and $B^-(F)$ can be defined.
\end{lemma}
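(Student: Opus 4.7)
The plan is to derive a contradiction from the hereditary $\widetilde{\mathfrak{M}}_c$-property, which any group definable in a finite-dimensional theory enjoys as a consequence of Lemma \ref{boundedind}. Assume for contradiction that $G \in \{B(F), B^-(F)\}$ is definable in some finite-dimensional theory. Applying the hereditary $\widetilde{\mathfrak{M}}_c$-property with $H = G$ and $N = \{e\}$, there exist constants $n, d \in \mathbb{N}$ bounding the length of any centralizer chain whose successive indices are at least $d$.

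My strategy is to contradict this by producing, for each $N$, a chain of centralizers of length $N+1$ whose successive indices are in fact infinite, by exploiting the Cantor-space structure directly. I would fix a non-identity (hence non-central, since $F$ is non-abelian simple) element $g_0 \in F$, choose pairwise disjoint nonempty clopen subsets $U_0, \ldots, U_N$ of the Cantor space $C$ (chosen to avoid the distinguished point $x_0$ when $G = B^-(F)$), and define $f_i \in G$ by $f_i(x) = g_0$ for $x \in U_i$ and $f_i(x) = e$ otherwise. Each $f_i$ is locally constant with finite image, hence continuous, and lies in $G$.

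The key computation will show
$$C_G(f_0, \ldots, f_k) = \{h \in G : h(x) \in C_F(g_0) \text{ for all } x \in U_0 \cup \cdots \cup U_k\}.$$
Consequently, the successive index $[C_G(f_0, \ldots, f_{k-1}) : C_G(f_0, \ldots, f_k)]$ will be in bijection with the set of continuous maps $U_k \to F/C_F(g_0)$, where $F/C_F(g_0)$ is viewed as a discrete set of left cosets: two elements of the larger centralizer lie in the same coset of the smaller one precisely when their quotient restricted to $U_k$ is pointwise in $C_F(g_0)$. Since $|F/C_F(g_0)| \geq 2$ and $U_k$ is an infinite clopen admitting countably many clopen partitions, this set is infinite. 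Taking $N > n$ thus produces a chain of length $N+1$ whose successive indices all exceed $d$ (indeed, are all infinite), contradicting the bound from the hereditary $\widetilde{\mathfrak{M}}_c$-property.

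I do not anticipate any real obstacle beyond the routine index computation; the only small point of care is arranging the clopens $U_i$ to avoid $x_0$ in the $B^-(F)$ case, so that each $f_i$ lies in $B^-(F)$.
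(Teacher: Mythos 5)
Your proof is correct and takes essentially the same route as the paper: both contradict a chain condition that finite-dimensionality imposes (you use the hereditary $\widetilde{\mathfrak{M}}_c$-bound on centralizer chains of tuples, the paper uses the $\omega$-DCC) by producing centralizers of functions that take a fixed non-central value of $F$ on clopen subsets of the Cantor space and checking that the successive indices are infinite. The difference between your disjoint clopens $U_0,\dots,U_N$ and the paper's nested clopens $A_1\subset A_2\subset\dots$ is only cosmetic, since $C_G(f_0,\dots,f_k)$ coincides with the centralizer of the single function equal to $g_0$ on $U_0\cup\dots\cup U_k$ and trivial elsewhere.
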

\begin{proof}
    It is sufficient to observe that there exists an infinite chain of definable subgroups $\{C_i\}_{i<\omega}$ such that $|C_i/C_{i+1}|$ is infinite, contradicting $\omega$-DCC.\\
    Let $\{A_i\}_{i<\omega}$ be a strictly ascending chain of clopen subsets in the Cantor set (this clearly exists). We take, for $f$ a non central element in $F$, the function $g_1$ such that $g_1(A_1)=\{f\}$ and $g_1(C-A_1)=\{0\}$. Let $C_1$ be the centraliser in $B(F)$ of $g_1$. This contains all the functions $g$ such that $g(A_1)\leq C_F(f)$ and $g$ assumes arbitrary values outside $A_1$. Let $g_2$ be a function with $g_2(A_2)=f$ and let $C_2$ be the centraliser of $g_2$ in $B(F)$. We verify that $|C_2:C_1|$ is infinite. Indeed, there exist pairwise disjoint clopen subsets $\{D_j\}_{j<\omega}$ of $C$ such that $D_j\subseteq A_2$ and $D_j\cap A_1=\emptyset$. Take the functions $s_j\in B(F)$ such that $s_i(A_1)=f$ and $s_i(D_i)=d_i\in F-C_F(f)$. Then, $s_j^{-1}s_i\in C_1$ but not in $C_2$. This verifies that the index of $C_2$ in $C_1$ is infinite. Iterating the proof for any $i$, we obtain the contradiction. The proof for $B^-(F)$ is similar.
\end{proof}
The second step is to show that a $\omega$-categorical finite-dimensional definable group is virtually nilpotent. To do this, we can proceed in two ways: either as in \cite{dobrowolski2020omega} or using the two following results.
\begin{lemma}
    A locally nilpotent definable group of finite dimension is virtually nilpotent.
\end{lemma}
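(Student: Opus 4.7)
The plan is to exploit the ascending chain of iterated almost centers of $G$, show it stabilizes at a definable subgroup of finite index, and then upgrade the resulting almost-nilpotence to genuine virtual nilpotence using local nilpotence.

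First I would observe that $G$, being definable in a finite-dimensional theory, is hereditarily $\widetilde{\mathfrak{M}}_c$, so each iterated almost center $\widetilde{Z}^n(G)$ is a definable subgroup. The ascending chain $\widetilde{Z}^0(G) \leq \widetilde{Z}^1(G) \leq \cdots$ has non-decreasing dimensions bounded by $\dim(G)$, so after finitely many steps the dimension stabilizes; beyond that point, strict inclusions must be of finite index, and Lemma \ref{boundedind} applied to the uniformly definable family of almost centralisers used to define the $\widetilde{Z}^n(G)$ rules out an infinite tail of proper finite-index extensions. Thus the chain terminates at some $\widetilde{F} := \widetilde{Z}^N(G)$.

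The crucial step is to prove $[G:\widetilde{F}]<\infty$. Passing to $\overline{G} := G/\widetilde{F}$, the stabilization together with Lemma \ref{Zfin} forces $\widetilde{Z}(\overline{G})$ to be finite, while $\overline{G}$ remains locally nilpotent and finite-dimensional. Suppose for contradiction that $\overline{G}$ is infinite. By the hereditarily $\widetilde{\mathfrak{M}}_c$-property applied to $K = H = \overline{G}$ and $N = 1$, one can build a sequence $(a_n)_{n<\omega}$ along which the descending chain $C_{\overline{G}}(a_1,\ldots,a_n)$ stabilizes, up to a uniformly bounded finite index, at a definable subgroup $C$. By construction every $g \in \overline{G}$ almost-centralises $C$, so the symmetry of almost centralisers (Lemma \ref{sym}) gives $C \apprle \widetilde{Z}(\overline{G})$, forcing $C$ to be finite. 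On the other hand, by local nilpotence each finitely generated subgroup $\langle a_1,\ldots,a_n\rangle$ has nontrivial center lying inside $C_{\overline{G}}(a_1,\ldots,a_n)$; choosing the $a_n$ to exhaust $\overline{G}$ and to keep the centralisers of maximal dimension at every step (using the existence of dimensional-generics from Lemma \ref{ExistGen}), one forces $C$ to be infinite. This contradiction yields $[G:\widetilde{F}]<\infty$.

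Finally I would convert almost-nilpotence of $\widetilde{F}$ into virtual nilpotence: at each step $n$, the factor $\widetilde{Z}^{n+1}/\widetilde{Z}^n$ lies in the almost center of $\widetilde{F}/\widetilde{Z}^n$, so the witnessing centralisers of finite index in $\widetilde{F}/\widetilde{Z}^n$ intersect to a characteristic subgroup $G_0 \leq \widetilde{F}$ of finite index whose genuine upper central series reaches $G_0$; local nilpotence of $G$ absorbs the residual finite commutators at each step, and $G_0$ then has finite index in $G$.

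The main obstacle will be the infiniteness of the stabilised centraliser $C$ in the second step. While local nilpotence guarantees nontrivial centers for each $\langle a_1,\ldots,a_n\rangle$, these centers could in principle collapse into a fixed finite subgroup along a poorly chosen sequence; the argument must combine the infinitude of $\overline{G}$, the chain-condition bounds from finite-dimensionality, and a sufficiently generic choice of the $a_n$ to ensure the accumulation of central elements into an infinite definable $C$.
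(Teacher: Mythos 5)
There is a genuine gap, and it sits exactly where you flagged it: the claim that one can ``force $C$ to be infinite'' by choosing the sequence $(a_n)$ generically. The construction that makes every $g\in\overline{G}$ almost-centralise $C$ (via the hereditarily $\widetilde{\mathfrak{M}}_c$ chain condition) requires you to append, at each stage, an arbitrary witness $g$ that drops the centraliser index by at least the bound $d$; you do not get to choose these witnesses, so ``keeping the centralisers of maximal dimension'' is not available, and in any case nothing rules out a priori that \emph{every} centraliser of a finite tuple in $\overline{G}$ is finite, in which case no choice of sequence can produce an infinite $C$. So the second half of your dichotomy (symmetry via Lemma \ref{sym} giving $C\apprle\widetilde{Z}(\overline{G})$, hence $C$ finite) is fine and matches the paper, but the intended contradiction never materialises.

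The paper closes this case not by making $C$ infinite but by a separate argument when the minimal-dimension centraliser is finite: it takes a centraliser $C_1$ of a finite tuple that is minimal among the \emph{finite} centralisers, observes that local nilpotence makes every such centraliser nontrivial (it contains the nontrivial center of the nilpotent group generated by the tuple), and that for any $g$ the finite centraliser $C_1\cap C_{\overline{G}}(g)$ must equal $C_1$ by minimality, so $C_1\leq Z(\overline{G})$; since Lemma \ref{Zfin} gives that the almost center of the quotient is \emph{trivial} (not merely finite, as in your reduction), this contradicts nontriviality of $C_1$. Note that your weaker ``$\widetilde{Z}(\overline{G})$ finite'' would not even yield a contradiction from $C_1\leq Z(\overline{G})$. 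Finally, your last step (intersecting the witnessing finite-index centralisers to turn the almost central series of $\widetilde{F}$ into a genuine central series on a finite-index subgroup) is an infinite intersection and is not justified as stated; this upgrade is precisely the content of \cite[Proposition 3.29]{hempel2020almost}, which the paper simply invokes, and you should too.
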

\begin{proof}
    It is sufficient to prove that, given $G$ a locally nilpotent definable group of finite dimension, $\widetilde{Z}(G)$ is infinite. Iterating the proof, we conclude that $G$ is almost nilpotent. Consequently, $G$ is virtually nilpotent from \cite[Proposition 3.29]{hempel2020almost}.\\
    Assume, by contradiction, that the almost center is finite. Then $G/\widetilde{Z}(G)$ is a definable locally nilpotent group of finite dimension with trivial almost center by Lemma \ref{Zfin}. We verify that this is impossible. Let $C$ be a centraliser of a tuple of minimal dimension between all the centralisers. If $\dim(C)=0$, take $C_1$ a minimal finite centraliser of a finite tuple. Then $C_1\cap C_{G}(g)$ is not empty by locally nilpotency. By minimality, $C_1\leq C_{G}(g)$ for all $g\in G$ \hbox{i.e.} $C_1\leq Z(G)=\{0\}$, a contradiction. Assume that $C$ is infinite. Then, $C\cap C_{G}(g)$ has same dimension as $C$ and so $C\apprle C_{G}(g)$ for all $g\in G$ \hbox{i.e.} $G\leq \widetilde{C}_G(C)$. By Lemma \ref{sym}, $C\apprle \widetilde{Z}(G)$. This is again contradictory.
 \end{proof}
Since an $\omega$-categorical group of exponent $p$ is locally nilpotent, the third case of Lemma \ref{CharSimGro} is impossible. Since the second case is excluded by Lemma \ref{B(F)}, we have the following conclusion.
\begin{lemma}\label{AbeChaSim}
    Let $G$ be an $\omega$-categorical finite-dimensional characteristically simple group. Then, $G$ is an elementary abelian $p$-group.
\end{lemma}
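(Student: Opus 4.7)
The plan is to invoke Fact~\ref{CharSimGro}, which gives a trichotomy for infinite countable $\omega$-categorical characteristically simple groups, and to rule out the two non-elementary-abelian alternatives. The statement is trivial if $G$ is finite, so I may assume $G$ is infinite and countable, putting me squarely in the hypotheses of Wilson's theorem.

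The second alternative of Fact~\ref{CharSimGro} ($G\cong B(F)$ or $G\cong B^{-}(F)$ with $F$ a non-abelian finite simple group) is excluded directly by Lemma~\ref{B(F)}: those groups admit an infinite strictly descending chain of definable subgroups of infinite index, in violation of the $\omega$-DCC that holds in every finite-dimensional theory.

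For the third alternative, $G$ would be an infinite perfect $p$-group. Since $G$ is $\omega$-categorical, $G$ is locally finite and of finite exponent; as a locally finite $p$-group of finite exponent, every finitely generated subgroup of $G$ is a finite $p$-group, hence nilpotent, so $G$ is locally nilpotent. The preceding lemma then yields that $G$ is virtually nilpotent, and inspection of its proof shows further that in any nontrivial locally nilpotent definable group of finite dimension the almost centre $\widetilde{Z}(G)$ is infinite. Since $\widetilde{Z}(G)$ is a definable characteristic subgroup, characteristic simplicity forces $\widetilde{Z}(G)=G$, i.e.\ $G$ is almost abelian. Hence every conjugacy class in $G$ is finite and, by $\omega$-categoricity, the sizes $|g^{G}|$ take only finitely many values and so are uniformly bounded. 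Neumann's BFC-theorem then gives that $[G,G]$ is finite, but $G$ is perfect, so $G=[G,G]$ is finite, contradicting the infinitude of $G$.

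The main obstacle is the closing step of the third alternative, where I need to extract a contradiction from the combination of virtual nilpotency, perfectness, characteristic simplicity, and infinitude of $G$. The path above leans on the definability of $\widetilde{Z}(G)$ and on Neumann's BFC-theorem; a more self-contained alternative would be to produce from the virtually nilpotent hypothesis a characteristic nilpotent subgroup of finite index in $G$ (via a characteristic core of a normal nilpotent subgroup of finite index), conclude via characteristic simplicity that $G$ itself is nilpotent, and then close with the elementary observation that a perfect nilpotent group is trivial.
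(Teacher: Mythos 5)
Your proof follows the paper's route exactly: Wilson's trichotomy (Fact~\ref{CharSimGro}), Lemma~\ref{B(F)} to exclude $B(F)$ and $B^{-}(F)$ via the $\omega$-DCC, and local nilpotency of $\omega$-categorical groups of exponent $p$ together with the preceding lemma to exclude the perfect case; indeed your closing argument (infinite almost centre, characteristic simplicity forcing $\widetilde{Z}(G)=G$, conjugacy classes uniformly bounded by $\omega$-categoricity, Neumann's theorem contradicting perfectness) merely fills in details the paper leaves implicit, and both of your suggested closings are sound. One small caveat: the finite case is not ``trivial'' as you claim --- a finite characteristically simple group may be a nonabelian simple group --- but the lemma, like Wilson's theorem on which it rests, is implicitly about infinite $G$, so this does not affect the argument.
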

By Lemma \ref{ChaSimSer} and Lemma \ref{AbeChaSim}, for any $\omega$-categorical group $G$ of finite dimension, there exists a series of definable characteristic subgroups $\{G_i\}_{i\leq n}$ such that:
    \begin{itemize}
        \item $G_i/G_{i+1}$ is characteristically simple;
        \item $G_i/G_{i+1}$ is an elementary abelian $p$-group.
    \end{itemize}
Therefore, we have the following conclusion.
\begin{lemma}
Let $G$ be a soluble finite-dimensional group. Then, $G$ is soluble and of finite exponent.
\end{lemma}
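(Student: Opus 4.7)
The plan is to read off both conclusions directly from the characteristic chain $G=G_0\geq G_1\geq\cdots\geq G_n=\{0\}$ produced in the discussion immediately preceding the statement, together with the fact that each successive quotient $G_i/G_{i+1}$ is an elementary abelian $p_i$-group for some prime $p_i$ (possibly depending on $i$). No additional machinery beyond Lemma \ref{ChaSimSer} and Lemma \ref{AbeChaSim} is needed.

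For solubility, the observation is that each $G_{i+1}$, being characteristic in $G$, is a fortiori normal in $G_i$, and the factor $G_i/G_{i+1}$ is abelian. Hence $\{G_i\}_{i\leq n}$ is a normal series with abelian factors, so $G$ is soluble of derived length at most $n$.

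For finite exponent, I would set $m:=\prod_{i=0}^{n-1}p_i$ and argue by induction along the chain. For any $g\in G=G_0$, the image of $g$ in $G_0/G_1$ has order dividing $p_0$, so $g^{p_0}\in G_1$; applying the same reasoning inside $G_1/G_2$ gives $g^{p_0p_1}\in G_2$, and iterating $n$ times yields $g^m\in G_n=\{e\}$. Thus $G$ has exponent dividing $m$, and in particular finite exponent.

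The argument is essentially a formal consequence of what has already been assembled, so there is no real obstacle. The only mild point worth flagging is that the primes $p_i$ associated with the successive characteristically simple factors need not be equal, which is why the uniform exponent bound appears as their product rather than as a single prime power.
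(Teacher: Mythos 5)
Your proposal is correct and is essentially the paper's own argument: the paper states this lemma as an immediate consequence of the characteristic chain from Lemma \ref{ChaSimSer} with elementary abelian $p$-group factors from Lemma \ref{AbeChaSim}, which is exactly what you use to read off solubility (normal series with abelian factors) and finite exponent (exponent dividing the product of the primes $p_i$). Nothing further is needed.
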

Finally, the virtual nilpotency follows from the next Lemma.
\begin{lemma}
    Let $G$ be a finite-dimensional $\omega$-categorical group. Then, $G$ is virtually nilpotent.
\end{lemma}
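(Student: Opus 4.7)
The plan is to induct on the length $n$ of the characteristic series $G=G_0\geq G_1\geq\cdots\geq G_n=\{0\}$ produced by Lemma \ref{ChaSimSer} and Lemma \ref{AbeChaSim}, whose factors $G_i/G_{i+1}$ are elementary abelian $p_i$-groups. The base case $n=1$ is immediate as $G$ is abelian. For the inductive step, I would apply the induction hypothesis to $G/G_{n-1}$ to obtain a definable finite-index subgroup $H\leq G$ with $H/N$ nilpotent, where $N:=G_{n-1}$; since $[G:H]$ is finite it suffices to show that $H$ itself is virtually nilpotent. Because $N$ is abelian and $H/N$ is nilpotent, this reduces to proving that the $H$-action on $N$ is virtually unipotent.

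To control this action, I would study the descending chain $N=N_0\supseteq N_1\supseteq N_2\supseteq\cdots$ defined by $N_{i+1}:=[H,N_i]$, a chain of definable characteristic subgroups of $H$ inside $N$. The $\omega$-DCC for definable subgroups in a finite-dimensional theory (Corollary 2.4 of \cite{wagner2020dimensional}), together with the uniform chain bound of Lemma \ref{boundedind}, forces this chain to stabilize up to finite index at some definable subgroup $M:=N_k$ satisfying $[H,M]\sim M$. If $M$ is finite then every quotient $N_i/N_{i+1}$ above $M$ is centralised by $H$, so $H$ is a finite-by-nilpotent group and is therefore virtually nilpotent by \cite[Proposition 3.29]{hempel2020almost}, completing the induction.

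The remaining case is $M$ infinite with $[H,M]\sim M$, and this is where Theorem \ref{VirtTriv} must supply the contradiction. I would extract from the $H$-action on $M$ a definable bilinear quasi-form $\lambda:\overline{H}\times M\to\overline{M}$, with $\overline{H}$ an abelian section of $H/C_H(M)$ produced from the centre of the nilpotent quotient $H/N$ and $\overline{M}$ a well-chosen $H$-invariant quotient of $M$, arranged so that the commutator $(h,m)\mapsto[h,m]$ descends to a genuinely bilinear map. Theorem \ref{VirtTriv} then forces $\lambda$ to be virtually almost trivial, which on unwinding yields that $[H,M]$ has infinite index in $M$, contradicting $[H,M]\sim M$. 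The main obstacle is precisely this last construction: naive target quotients such as $M/[H,[H,M]]$ collapse to the trivial quasi-form exactly in the regime $[H,M]\sim M$, so the choice of $\overline{H}$ and $\overline{M}$ must carefully exploit the nilpotency of $H/N$ (climbing the upper central series of $H/N$ to isolate an abelian acting section) in order to produce a nontrivial bilinear form from which Theorem \ref{VirtTriv} extracts genuine information rather than a tautology.
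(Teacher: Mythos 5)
The decisive case of your argument is exactly the one you leave open, and it is not a presentational gap but the mathematical heart of the matter. When $M$ is infinite with $[H,M]\sim M$, the commutator map $(h,m)\mapsto [h,m]$ is \emph{not} additive in the acting variable: $[h_1h_2,m]=[h_1,m]^{h_2}[h_2,m]$, so it only becomes bi-additive after the action on the target has been trivialized, i.e.\ precisely in the regime you are trying to rule out. The canonical obstruction here is a field action, $K^\times$ acting on $K^+$, for which $(h,m)\mapsto hm-m$ is multiplicative-by-additive and no quotient of $M$ turns it into a nontrivial bilinear quasi-form; so Theorem \ref{VirtTriv} is structurally the wrong tool at this stage, and the ``careful choice of $\overline{H}$ and $\overline{M}$'' you defer to cannot exist in general. (There are also smaller unjustified steps --- definability of the subgroups $[H,N_i]$, which in the $\omega$-categorical setting follows from invariance and Ryll--Nardzewski, and the stabilization of the chain, which follows from the dimension drop at each infinite-index step --- but these are repairable; the missing idea in the $[H,M]\sim M$ case is not.)

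The paper resolves precisely this point by a different mechanism: it takes an almost-abelian series (via \cite[Proposition 3.21]{hempel2020almost}), picks an absolutely minimal definable section $A$ of one layer under the action of a finite-index subgroup $H$ of the next, and applies the Linearization Theorem \ref{LinInv}. If the action were not almost trivial, a definable field $K$ would appear with an infinite quotient of $H$ embedding into $K^\times$; this is impossible because $H$ has finite exponent and $x^n=1$ has at most $n$ roots in a field. Hence the action is almost trivial, Lemma \ref{lemma 4} (together with symmetry, Lemma \ref{sym}) makes the relevant commutator finite, the almost center is shown to be infinite, and one concludes almost nilpotency and then virtual nilpotency by \cite[Proposition 3.29]{hempel2020almost}. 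Theorem \ref{VirtTriv} is only invoked later, for a nilpotent group of class $2$, where the commutator map genuinely is a bilinear quasi-form. To repair your proof you would need to import this linearization-plus-finite-exponent argument (or an equivalent dichotomy) at the point where $[H,M]\sim M$; as written, the proposal does not establish the lemma.
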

\begin{proof}
By \cite[Proposition 3.29]{hempel2020almost}, it is sufficient to prove that $G$ is almost nilpotent. We prove the Lemma by induction on the dimension. Clearly, if $G$ has infinite almost center, the conclusion follows by induction hypothesis. Let $\{G_i\}_{i\leq n}$ be a series of definable subgroups of $G$ such that $G_i/G_{i+1}$ is infinite and almost abelian. This exists by \cite[Proposition 3.21]{hempel2020almost}. Clearly, the almost abelian group $G_{n-2}/G_{n-1}$ acts on the abelian group $G_{n-1}/G'_{n-1}$. Let $A$ be a definable subgroup of $G_{n-1}/G'_{n-1}$ of minimal dimension between all the subgroup that are minimal for a definable subgroup of finite index in $G_{n-2}/G_{n-1}$. Let $H\leq G_{n-2}/G_{n-1}$ be a definable subgroup of finite index such that $A$ is $H$-invariant. Then, by construction, $A$ is absolutely $H$-minimal. By Linearization Theorem \ref{LinInv}, if the action is not almost trivial, there exists a finite subgroup $F_1$ in $A$ and a definable subgroup $F_2$ not of finite index in $H$ such that $A/F_1\simeq R^+$ and $H/F_2$ embeds in $R^{\times}$. The latter contradicts the fact that $H$ is of finite exponent. Therefore, the action is almost trivial. This implies that $\widetilde{C}_{G_{n-1}}(G_{n-2})=:C$ is a definable infinite $G$-normal subgroup in $G_{n-1}$. By Lemma \ref{sym}, $\widetilde{C}_{G_{n-2}}(C)$ is of finite index in $G_{n-2}$. Moreover, by Lemma \ref{lemma 4}, $[C,\widetilde{C}_{G_{n-2}}(G)]=:C_1$ is a finite $G$-normal subgroup. $G_{n-3}/\widetilde{C}_{G_{n-2}}$ is almost abelian and acts on $C/C_1$. Iterating the proof for any $G_i$, we obtain that that $\widetilde{Z}(G)$ is infinite.   
\end{proof}
We can also verify that $G$ is virtually nilpotent-by-finite with a proof similar to \cite{dobrowolski2020omega}. We need the following lemma.
\begin{lemma}\label{BooAlg}
    An atomless Boolean algebra cannot have finite dimension (\hbox{i.e.} it cannot be defined in a finite-dimensional theory).
\end{lemma}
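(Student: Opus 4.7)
The plan is to mimic the strategy of Lemma \ref{B(F)}: exhibit an infinite chain of parameter-definable subsets whose invariants strictly decrease along the chain, contradicting that a finite-dimensional theory assigns every interpretable set a dimension in $\omega\cup\{-\infty\}$. Concretely, assuming $B$ is an atomless Boolean algebra definable in a finite-dimensional theory $T$, I would work inside a saturated model and consider, for each $a\in B$, the $\{a\}$-definable principal ideal $[0,a]=\{x\in B:x\le a\}$, tracking the single quantity $\operatorname{dim}([0,a])$.

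The central step is to show that if $a<b$ in $B$, then $\operatorname{dim}([0,a])<\operatorname{dim}([0,b])$. For this I would exhibit the definable bijection
$$[0,b]\longrightarrow [0,a]\times [0,b\setminus a],\qquad x\mapsto (x\wedge a,\ x\wedge\neg a),$$
whose inverse is $(y,z)\mapsto y\vee z$; both directions are built from the definable Boolean operations. By invariance of $\operatorname{dim}$ under definable bijections together with the fibration axioms applied to the projection to $[0,a]$, one obtains
$$\operatorname{dim}([0,b])=\operatorname{dim}([0,a])+\operatorname{dim}([0,b\setminus a]).$$
Since $B$ is atomless and $b\setminus a\neq 0$, one can iteratively split the top element of $[0,b\setminus a]$ to produce an infinite strictly descending chain below $b\setminus a$, so $[0,b\setminus a]$ is infinite and hence $\operatorname{dim}([0,b\setminus a])\ge 1$. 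The strict inequality $\operatorname{dim}([0,a])<\operatorname{dim}([0,b])$ follows.

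To conclude, atomlessness supplies an infinite strictly descending chain $1=a_0>a_1>a_2>\cdots$ in $B$, which by the previous step yields an infinite strictly descending chain of non-negative integers
$$\operatorname{dim}([0,a_0])>\operatorname{dim}([0,a_1])>\operatorname{dim}([0,a_2])>\cdots,$$
all bounded above by $\operatorname{dim}(B)<\omega$, a contradiction. I do not foresee a serious obstacle: the only thing to verify is that the product decomposition and the principal ideals are genuinely definable from the Boolean structure, which is immediate. Compared to Lemma \ref{B(F)}, which relies on $\omega$-DCC and therefore has to arrange indices larger than $\omega$, the additivity of $\operatorname{dim}$ here gives a strict drop at every step and avoids that constraint entirely.
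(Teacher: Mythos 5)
Your proof is correct, and its engine is the same as the paper's: both arguments apply the fibration axioms to the definable map $x\mapsto x\wedge a$ onto the principal ideal below $a$, whose fibres are infinite by atomlessness --- in your version this is packaged as the definable bijection $[0,b]\simeq[0,a]\times[0,b\setminus a]$, in the paper as the explicit description of the fibres of $f_a\colon B\to B_a$ as translates of $\{x\in B:\ x\leq \lnot a\}$. Where you genuinely differ is in how the contradiction is extracted. The paper fixes an atomless algebra $B$ of minimal dimension among those definable in $T$, verifies that $B_a$ is again a definable infinite atomless Boolean algebra, uses minimality to get $\operatorname{dim}(B_a)=\operatorname{dim}(B)$, and then contradicts this single equality by fibration. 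You instead prove the local statement $\operatorname{dim}([0,a])<\operatorname{dim}([0,b])$ for every $a<b$ and iterate it along an infinite strictly descending chain, which atomlessness supplies; this avoids both the minimal-counterexample selection and the verification that the principal ideal is itself an atomless Boolean algebra, at the mild cost of invoking product additivity $\operatorname{dim}(X\times Y)=\operatorname{dim}(X)+\operatorname{dim}(Y)$, which indeed follows from the two fibration axioms because every fibre of the projection is in definable bijection with the second factor. Two small points you should make explicit: $[0,a]$ contains $0$, so its dimension is a non-negative integer rather than $-\infty$ (needed for the strictly decreasing sequence of integers to be a contradiction); and, exactly as in the paper's proof, the lemma is implicitly about a nontrivial (hence infinite) atomless algebra, since you need $0\neq 1$ to start the descending chain $1=a_0>a_1>\cdots$.
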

\begin{proof}
    Assume, for a contradiction, that there exists a finite-dimensional theory $T$ and $B$ an atomless Boolean algebra definable in $T$. Assume $B$ of minimal dimension $n$ among all the infinite atomless Boolean algebras definable in $T$.\\
    Let $a\in B$ be an element not equal to $0$ or $1$ and define the function
    $$f_a: (B, \vee)\to  (B_a,\vee)$$
    that sends $b$ in $f_a(b)=a\wedge b$ where $B_a=\{b\in B:\ \ b\leq a\}=\{b\in B:\ \exists c\ a\wedge c=b\}$. This map is clearly surjective. We verify that $B_a$ is an atomless Boolean algebra. Since $B_a$ is clearly definable, it is either finite or its dimension is equal to $n$. It is closed for $\wedge$ and $\vee$ since $a\wedge (b\vee c)=a\wedge b\vee a\wedge c$ and $a\wedge (a\wedge b)=a\wedge b$ (since $a\wedge 1=a$, this also proves that $f_a$ is an homorphism of monoids). Therefore, $B_a$ is closed for $\vee$. It is closed also for $\wedge$ since $a\wedge b\wedge a\wedge c=a\wedge (b\wedge c)$. The other properties of Boolean algebra follow easily (assuming $1_{B_a}=a$). Finally, we prove that it is atomless. It is sufficient to show that, for any $a\wedge c\in B_a$, there exists $d\in B_a$ such that $d\not=0,a\wedge c$ and $d< a\wedge c$. Since $B$ is an atomless Boolean algebra and $a\wedge c\in B$, there exists $d\in B$ such that $0<d< a\wedge c\leq a$ and so $d\in B_a$. Therefore, $n=\operatorname{dim}(B)\geq \operatorname{dim}(B_a)\geq n$ so $\operatorname{dim}(B_a)=n$. If we verify that $f_a^{-1}(c)$ is infinite for every $c\in B_a$, it implies that $n=\operatorname{dim}(B)\geq 1+\operatorname{dim}(B_a)=1+n$ by fibration, a contradiction. We prove that $f_a^{-1}(0)=\{b\in B:\ b\leq \lnot a\}$. The latter set is infinite since $B$ is atomless and $\lnot a=0$ iff $a=1$. Assume $b\leq \lnot a$. Then, $a\wedge b=a\wedge( \lnot a\wedge b)=(a\wedge \lnot a)\wedge b=0\wedge b=0$. On the other hand, if $a\wedge b=0$, then $b=1\wedge b=(a\vee \lnot a)\wedge b=(a\wedge b)\vee (\lnot a\wedge b)=\lnot a\wedge b$ and so $b\leq \lnot a$.\\
    Given $b\in f_a^{-1}(0)$ and $d\in f_a^{-1}(c)$, then $a\wedge (b\vee d)=(a\wedge b)\vee (a\wedge d)=a\wedge d=c$. Clearly, $f_a(d)=f_a(a\wedge d)$. Therefore, any element $(a\wedge d)\vee b$ with $b\in f_a^{-1}(0)$ is in $f_a^{-1}(c)$. Assume that, for $b\not=b_1\leq \lnot a$, $(a\wedge d)\vee b=(a\wedge d)\vee b_1$. Then, $\lnot a\wedge ((a\wedge d)\vee b)=\lnot a\wedge ((a\wedge d)\vee b_1)$. But the first is equal to $\lnot a\wedge b$ and the second to $\lnot a\wedge b_1$. Since $b,b_1\leq \lnot a$, these are equal to $b$ and $b_1$, contradicting the assumption that $b\not=b_1$. Therefore, each fiber is infinite, and we reach a contradiction.   
\end{proof}
Since any $\omega$-categorical group can be written as a chain of characteristic subgroups with characteristically simple factors, we may assume that every $\omega$-categorical finite-dimensional subgroup is soluble. If it is not nilpotent-by-finite, an atomless Boolean algebra can be defined on it by \cite{archer1997soluble}, contradicting Lemma \ref{BooAlg}.\\
Therefore, we may assume that every $\omega$-categorical group of finite dimension is nilpotent-by-finite. From \cite[Proposition 3.29]{hempel2020almost}, any $\omega$-categorical finite-dimensional subgroup has a characteristic nilpotent subgroup of finite index. Finally, we prove that any nilpotent $\omega$-categorical subgroup of finite dimension is finite-by-abelian-by-finite. This verifies the meta-conjecture for the group case.
\begin{lemma}
    A nilpotent $\omega$-categorical group of finite dimension is finite-by-abelian-by-finite.
\end{lemma}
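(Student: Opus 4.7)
The plan is to induct on the nilpotency class $c$ of $G$ and, for each $c$, to prove the stronger statement that $G$ is \emph{almost abelian}, \hbox{i.e.} $\widetilde{Z}(G)$ has finite index in $G$. This implies the lemma: for any $z\in\widetilde{Z}(G)$ the centraliser $C_G(z)$ has finite index in $G$, and by Lemma~\ref{boundedind} these indices are uniformly bounded, so all conjugacy classes in $\widetilde{Z}(G)$ have uniformly bounded size. B.~H.~Neumann's classical BFC theorem then gives $[\widetilde{Z}(G),\widetilde{Z}(G)]$ finite, so the characteristic finite-index subgroup $\widetilde{Z}(G)$ is finite-by-abelian and $G$ is finite-by-abelian-by-finite.

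The case $c=1$ is immediate. For $c\geq 2$ I first arrange a definable subgroup $H\leq G$ of finite index such that, for every $h\in H$, the set $L_h:=\{g\in G:[h,g]\in Z(G)\}$ is a subgroup of finite index in $G$ with $[G:L_h]$ uniformly bounded. For $c=2$ take $H=G$, since $[h,g]\in G'\leq Z(G)$ holds automatically. For $c\geq 3$, the induction hypothesis applied to $G/Z(G)$ (of class at most $c-1$) shows that it is finite-by-abelian-by-finite, hence almost abelian; I let $H$ be the preimage of $\widetilde{Z}(G/Z(G))$ in $G$. The uniform bound on $[G:L_h]$ then comes from $\omega$-categoricity together with Lemma~\ref{boundedind}.

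The main tool is the partial map
\[
\Phi:H\times G\to Z(G),\qquad \Phi(h,g)=[h,g]\ \text{ whenever }[h,g]\in Z(G),
\]
which, after passing to abelianisations of the source, I will argue is a definable bilinear quasi-form in the sense of the paper. Whenever $[h,g_1],[h,g_2]\in Z(G)$, the central commutator $[h,g_1]$ is fixed under conjugation, so the identity $[h,g_1g_2]=[h,g_2]\cdot[h,g_1]^{g_2}$ collapses to $\Phi(h,g_1g_2)=\Phi(h,g_1)+\Phi(h,g_2)$, and an analogous computation works in the first variable; the same calculation shows $\Phi$ vanishes on commutators, justifying the factorisation through abelianisations. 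Applying Theorem~\ref{VirtTriv} produces definable subgroups $H_0\leq H$ and $G_0\leq G$ of finite index, and a finite subgroup $S\leq Z(G)$, with $\Phi(H_0,G_0)\subseteq S$.

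For the final step, fix $h\in H_0$ and a transversal $T_h$ of $L_h\cap G_0$ in $G$, of size at most $[G:L_h]\cdot[G:G_0]$, hence uniformly bounded. Writing any $g\in G$ as $g=lt$ with $l\in L_h\cap G_0$ and $t\in T_h$, centrality of $[h,l]\in S$ gives $[h,g]=[h,t]\cdot[h,l]\in \{[h,t]:t\in T_h\}\cdot S$, a set of uniformly bounded cardinality. Thus each $h\in H_0$ has a $G$-conjugacy class of bounded size, so $H_0\leq\widetilde{Z}(G)$ and $\widetilde{Z}(G)$ has finite index in $G$, closing the induction. The main obstacle I expect is ensuring that $\Phi$ fits into the formal framework of bilinear quasi-forms of the paper (which requires abelian source groups) and obtaining the uniform bound on $[G:L_h]$; both reduce to standard commutator calculus and to the inductive hypothesis on $G/Z(G)$.
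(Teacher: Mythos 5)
Your overall strategy (feed a commutator map into Theorem \ref{VirtTriv} and convert ``virtually almost trivial'' into uniformly bounded conjugacy classes, then finish with a Neumann/BFC-type argument) is essentially the paper's, and several pieces are fine: the uniform bound on $[G:L_h]$ from $\omega$-categoricity, the case $c=2$, and the final bookkeeping with a transversal of $L_h\cap G_0$. The genuine gap is in the inductive step for class $c\geq 3$: the partial map $\Phi(h,g)=[h,g]$ (defined when $[h,g]\in Z(G)$) is not a bilinear quasi-form in the sense required by Theorem \ref{VirtTriv}, because the source groups $H$ and $G$ are not abelian there, and your proposed repair --- factoring through the abelianisations --- is not well defined. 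Your computation only shows that $\Phi(h,\cdot)$ kills commutators of elements of $L_h$, i.e.\ $[L_h,L_h]$; for $\Phi(h,\cdot)$ to descend to $G/G'$ you would need $[h,c]=e$ for every $c\in G'$ in its domain, whereas in general $\Phi(h,gc)=\Phi(h,g)+[h,c]$ with $[h,c]$ a possibly nontrivial central element. This happens in any group of class exactly $3$ (for instance in $UT_4(\mathbb{F}_p)$, with $h$ an elementary superdiagonal matrix and $c\in G'$ chosen so that $[h,c]$ is a nontrivial element of the centre), so the quasi-form you want to hand to Theorem \ref{VirtTriv} does not exist as described, and the induction breaks exactly where the real work lies.

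The paper avoids this by first reducing to nilpotency class $2$ before invoking Theorem \ref{VirtTriv}: it replaces $G$ by the preimage of $\widetilde{Z}(G/Z(G))$, uses \cite[Proposition 3.27]{hempel2020almost} to see that $F=G'Z(G)$ is a finite extension of $Z(G)$ with $F'$ finite, quotients by $F'$, passes to $C_G(F/Z(G))$ so that $F\leq Z_2(G)$, and then runs a minimality argument on $|G_0':G_0'\cap Z(G)|$ over definable finite-index subgroups $G_0$ to obtain a finite-index subgroup whose derived subgroup lies in $Z(G)$; only at that point is the commutator map an honest bilinear form on the abelian group $G/Z(G)$, and Theorem \ref{VirtTriv} applies. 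Your induction hypothesis for $G/Z(G)$ does give you a definable finite-index $A$ with $A'$ contained in a finite extension of $Z(G)$, which is precisely the starting point of that reduction, but it does not by itself let you bypass it: to make your argument correct you would have to carry out this class-$2$ reduction (or an equivalent one) before defining the form.
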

\begin{proof}
    We may take a counterexample of minimal nilpotent class possible.\\
    By minimality, $G/Z(G)$ is finite-by-abelian-by-finite. Since $G$ is not finite-by-abelian-by-finite, $Z(G)$ cannot be finite. The pre-image of $\widetilde{Z}(G/Z(G))$ is a characteristic subgroup of finite index in $G$, and so we may replace $G$ with it. By \cite[Proposition 3.27]{hempel2020almost}, $F:=G'Z(G)$ is a finite extension of $Z(G)$. Again by \cite[Proposition 3.27]{hempel2020almost}, $F'$ is finite and characteristic. Therefore, the quotient $G/F'$ is again $\omega$-categorical and we can assume $F'=\{0\}$. As $F/Z(G)$ is characteristic, $C_G(F/Z(G))$ is a characteristic definable subgroup of finite index. Up to take $G=C_G(F/Z(G))$, we may assume $[G,F]\leq Z(G)$ \hbox{i.e.} $F\leq Z^2(G)$. Consequently, the map 
    $$[\_,g]:G\to F/Z(G)$$
    that sends $g'$ in $[g,g']+Z(G)$ is a definable homomorphism. The kernel of $H_g$ has finite index in $G$ for any $g$. Consequently, the map
    $$[\_,g]:h\in H_g\to [h,g]\in Z(G)$$
    is a homomorphism with kernel containing $H_g'Z(G)$, since the image is abelian. Let $G_0$ be a definable subgroup in $G$ of finite index such that $|G_0':G'_0\cap Z(G)|$ is minimal. Then, for any definable subgroup $H$ of finite index, $G'_0Z(G)$ and $(H\cap G_0)'Z(G)$ are finite extensions of $Z(G)$ with $G'_0Z(G)\geq (G_0\cap H)'Z(G)$. Moreover, $|G'_0Z(G)/Z(G)|=|G'_0/Z(G)\cap G'_0|$ and $|(G_0\cap H)'Z(G):Z(G)|=|(G_0\cap H)'/Z(G)\cap (G_0\cap H)'|$. By minimality, the latter must be equal to the first. Therefore, $G'_0Z(G)=(H_g\cap G_0)'Z(G)\leq H'_gZ(G)\leq C_G(g)$. This implies that $G'_0\leq Z(G)$ and so $Z_2(G)$ is of finite index in $G$. Since $Z_2(G)$ is a characteristic subgroup, we may assume that $G$ is nilpotent of class $2$. Then the function
    $$[\_,\_]:(g_1,g_2)\in G\times G\to [g_1,g_2] \in Z(G)$$
    is a bilinear quasi-form. By Theorem \ref{VirtTriv}, it should be virtually almost trivial, and this is equivalent to saying that $G$ is finite-by-abelian-by-finite.
\end{proof}
In conclusion, we have proven the following theorem.
\begin{theorem}
    Let $G$ be an $\omega$-categorical group of finite dimension. Then $G$ is finite-by-abelian-by-finite.
\end{theorem}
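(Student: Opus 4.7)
The plan is to derive the theorem as an immediate combination of the two deeper results already established in this section: the virtual nilpotency of any $\omega$-categorical group of finite dimension, and the fact that a nilpotent such group is finite-by-abelian-by-finite. First I would invoke the virtual nilpotency lemma on $G$ to extract a nilpotent (definable, in fact characteristic) subgroup $N$ of finite index in $G$. Because $N$ is definable in $G$, it inherits finite-dimensionality from the ambient dimension function, and $\omega$-categoricity of $N$ as a structure follows from the corollary to Theorem \ref{RyllNard}; thus $N$ satisfies all the hypotheses of the nilpotent-case lemma.

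Next I would apply the nilpotent-case lemma to $N$, producing a subgroup $A \leq N$ of finite index such that $A'$ is finite. Multiplicativity of indices gives $[G:A] = [G:N]\cdot [N:A] < \infty$, and the derived subgroup $A'$ is intrinsic to $A$ (so it remains finite when $A$ is viewed inside $G$). This exhibits the required chain: a finite subgroup $A'$, sitting inside $A$ with $A/A'$ abelian, sitting inside $G$ with finite index. Hence $G$ is finite-by-abelian-by-finite.

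The main obstacle is not this final deduction, which is essentially bookkeeping, but lies in the two ingredients being combined. The reduction to the nilpotent case rests on ruling out the exotic characteristically simple groups $B(F)$ and $B^-(F)$ via $\omega$-DCC (Lemma \ref{B(F)}) and on the Linearization Theorem \ref{LinInv}, which together force every characteristically simple factor in the composition series of Lemma \ref{ChaSimSer} to be elementary abelian, and then an almost-centraliser chase upgrades local nilpotency to virtual nilpotency. The nilpotent case itself ultimately reduces to showing that the commutator bilinear quasi-form $[\_,\_]\colon G\times G\to Z(G)$ is virtually almost trivial, which is precisely the content of Theorem \ref{VirtTriv}. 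Once both of these machines are in place, the theorem as stated is a one-line consequence, and I see no additional difficulty to address at this stage.
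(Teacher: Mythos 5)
Your proposal is correct and follows the paper's own route: the paper likewise obtains the theorem by combining the virtual nilpotency lemma (giving a characteristic nilpotent subgroup of finite index, via \cite[Proposition 3.29]{hempel2020almost}) with the lemma that a nilpotent $\omega$-categorical group of finite dimension is finite-by-abelian-by-finite, the latter resting on Theorem \ref{VirtTriv} applied to the commutator quasi-form. The remaining index bookkeeping you describe is exactly the implicit final step in the paper.
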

For rings, the proof is a straightforward corollary of Theorem \ref{VirtTriv}.
\begin{lemma}
    Let $R$ be an $\omega$-categorical ring of finite dimension. Then, $R$ is virtually finite-by-null.
\end{lemma}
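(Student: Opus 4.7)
The plan is to apply Theorem \ref{VirtTriv} to the multiplication, viewed as a definable bilinear form $\mu \colon R \times R \to R$, $(x,y) \mapsto xy$, on the additive group of $R$. Since $R$ is $\omega$-categorical of finite dimension, the theorem applies and yields that $\mu$ is virtually almost trivial. Concretely, this produces definable additive subgroups $R_0, R_0' \leq R$ of finite index together with a finite additive subgroup $S \leq R$ with $R_0 \cdot R_0' \subseteq S$. The goal is then to package this additive information into a subring of finite index in $R$ with a finite two-sided ideal of null quotient.

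From this point I would build a finite two-sided ideal of $R$ containing $S$. First I would normalize $S$ to be the subgroup of $R$ generated by the set of actual products $\{r_0 r_0' : r_0 \in R_0,\ r_0' \in R_0'\}$, so that every $s \in S$ has the form $\sum_i n_i a_i b_i$ with $a_i \in R_0$ and $b_i \in R_0'$. Writing $R$ as a finite union of cosets of $R_0'$, for any $r \in R_0$ the set $rR$ decomposes as a finite union of translates of $rR_0' \subseteq S$, hence is finite; so $R_0 \subseteq L := \{r \in R : rR \text{ finite}\}$, and symmetrically $R_0' \subseteq N := \{r \in R : Rr \text{ finite}\}$ (which also follows abstractly from Lemma \ref{almosttrivial}). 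Consequently, for any $s \in S$ one has $Rs \subseteq \sum_i n_i R b_i$ and $sR \subseteq \sum_i n_i a_i R$, both finite. Hence $S \subseteq L \cap N$, and the two-sided ideal $I := \mathbb{Z}S + RS + SR + RSR$ generated by $S$ is a finite union of finite sets, so $I$ is finite.

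To conclude I would set $T := (R_0 \cap R_0') + I$. Then $T$ is an additive subgroup of finite index in $R$, and
$$T^2 \subseteq (R_0 \cap R_0')^2 + (R_0 \cap R_0')I + I(R_0 \cap R_0') + I^2 \subseteq R_0 R_0' + I \subseteq S + I = I,$$
since $I$ is a two-sided ideal of $R$ (so $R I + I R + I^2 \subseteq I$). Thus $T$ is closed under multiplication and is a subring of finite additive index in $R$, $I$ is a finite two-sided ideal of $T$, and $T/I$ is null because $T^2 \subseteq I$. This gives exactly the required virtual finite-by-null decomposition of $R$.

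The main technical obstacle is the middle step: Theorem \ref{VirtTriv} only controls the set of products $R_0 \cdot R_0'$ as a subset of the additive group, while ``finite-by-null'' is a genuinely ring-theoretic statement, so one must promote the finite subgroup $S$ to a finite \emph{two-sided ideal} of $R$. The device that makes this possible is to expand an element of $S$ as an integer combination of actual products and then to use the containments $R_0 \subseteq L$ and $R_0' \subseteq N$ to see that each factor annihilates $R$ up to a finite set, which lets us bound both $Rs$ and $sR$ uniformly and close $S$ under multiplication by $R$ without leaving a finite set.
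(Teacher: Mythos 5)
Your proposal is correct and follows the same route as the paper: apply Theorem \ref{VirtTriv} to the multiplication map $R\times R\to R$ viewed as a definable bilinear (quasi-)form and read off virtual almost triviality. The paper stops there and leaves the final packaging implicit, whereas you carry out the (routine but worthwhile) extra bookkeeping of promoting the finite subgroup $S$ to a finite two-sided ideal $I$ and exhibiting the finite-index subring $T=(R_0\cap R_0')+I$ with $T^2\subseteq I$; this step is sound, noting only that the finiteness of $RS$, $SR$, $RSR$ is best justified by observing that each $Rs$ and $sR$ is a finite additive subgroup (image of an additive endomorphism), so finite sums of them are finite subgroups.
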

\begin{proof}
    It is sufficient to observe that the function
    $$\_\cdot\_:R\times R\to R$$
    that sends $(r,r')$ in $r\cdot r'$ is a bilinear form. By Theorem \ref{VirtTriv}, it must be virtually almost trivial. Therefore, $R$ is virtually finite-by-null.
\end{proof}


\begin{thebibliography}{99}
\bibitem{dobrowolski2020omega}{\sc Dobrowolski, Jan and Wagner, Frank}: ‘‘On $\omega$-categorical groups and rings of finite burden’’, {\it Israel Journal of Mathematics} 236 (2020), 801--839.
\bibitem{hempel2020almost}{\sc Hempel, Nadja}: ‘‘Almost group theory’’, {\it Journal of Algebra} 556 (2020), 169--224.
\bibitem{wilson1981algebraic}{\sc Wilson, JS}: ‘‘The algebraic structure of $\aleph_0$-categorical groups’’, {\it Groups St. Andrews} (1981), 345--358.
\bibitem{krupinski2012}{\sc Krupi{\'n}ski, Krzysztof}: ‘‘On $\omega$-categorical groups and rings with NIP’’, {\it Proceedings of the American Mathematical Society} 140(7) (2012), 2501--2512.
\bibitem{wagner2020dimensional}{\sc Wagner, Frank O}: ‘‘Dimensional groups and fields’’, {\it The Journal of Symbolic Logic} 85(3) (2020), 918--936.
\bibitem{archer1997soluble}{\sc Archer, Richard and Macpherson, Dugald}: ‘‘Soluble omega-categorical groups’’, {\it Mathematical Proceedings of the Cambridge Philosophical Society} 121(3) (1997), 219--227.
\bibitem{evans2000supersimple}{\sc Evans, David M and Wagner, Frank O}: ‘‘Supersimple $\omega$-categorical groups and theories’’, {\it The Journal of Symbolic Logic} 65(2) (2000), 767--776.
\bibitem{RyllNard59}{\sc Ryll-Nardzewski,Czesław}: ‘‘On category in power $\leq \aleph_0$’’, {\it Bull. Acad. Poi. Sér. Math. Astr. Phys.} 7 (1959), 545–548.
\bibitem{macpherson1988absolutely}{\sc Macpherson, H Dugald}: ‘‘Absolutely ubiquitous structures and $\aleph_0$-categorical groups’’, {\it The Quarterly Journal of Mathematics} 39(4) (1988), 483--500.
\bibitem{felgner1978ℵ0}{\sc Felgner, Ulrich}: ‘‘$\aleph_0$ stable groups’’, {\it Mathematische Zeitschrift} 160(1) (1978), 27--49.
\bibitem{baur1979totally}{\sc Baur, Walter and Cherlin, Gregory and Macintyre, Angus}: ‘‘Totally categorical groups and rings’’, {\it Journal of Algebra} 57(2) (1979), 407--440.
\bibitem{svenonius1959no}{\sc Svenonius, Lars}: ‘‘$\aleph_0$-categoricity in first-order predicate calculus 1’’, {\it Theoria} 25(2) (1959), 82--94.
\bibitem{engeler1959aquivalenzklassen}{\sc Engeler, Erwin}: ‘‘{\"A}quivalenzklassen von n-Tupeln’’, {\it Mathematical Logic Quarterly} 5(14-24) (1959), 340--345.
\bibitem{baldwin1977ℵ0}{\sc Baldwin, John T and Rose, Bruce}: ‘‘$\aleph_0$-categoricity and stability of rings’’, {\it Journal of Algebra} 45(1) (1977), 1--16.
\bibitem{invitti2025End}{\sc Invitti, Moreno}: ‘‘Endogenies and linearization in the non-virtually connected case’’ {\it Hal preprint hal-05338417} (2025).
\end{thebibliography}
\end{document}